\documentclass[xcolor=dvipsnames,svgnames,table,reqno,11pt]{amsart}

\usepackage[left=1.5in, right=1.5in, bottom=1.25in, height=8in]
{geometry}

\input xy
\xyoption{all}
\usepackage{accents}
\usepackage{epsfig}
\usepackage{xcolor}
\usepackage{amsthm}
\usepackage{amssymb}
\usepackage{bm} %\usepackage{bbm}
\usepackage{amsmath}
\usepackage{amscd}
\usepackage{amsopn}
\usepackage{graphicx}
\usepackage{tikz}
\usetikzlibrary{shapes,shadows,arrows}
\usepackage{tikz-cd}

\usepackage{xspace}
\usepackage{hhline}
\usepackage{easybmat}
\usepackage{caption}   
\usepackage{relsize}

\usepackage{url}
\usepackage{enumitem, hyperref}\hypersetup{colorlinks}

\usepackage{stmaryrd}

\usepackage[T1]{fontenc}

\colorlet{purpleB70}{blue!70!red}

\colorlet{orangeR65}{red!65!yellow}

\definecolor{red2}{HTML}{d41173}

\definecolor{neongreen}{HTML}{1bf702}

\definecolor{radicalred}{HTML}{FF355E}

\definecolor{denim}{HTML}{1560BD}

\definecolor{darkcyan}{rgb}{0.0, 0.55, 0.55}

\definecolor{cilek}{HTML}{FF43A4}

\definecolor{mor}{HTML}{9F00C5}

\definecolor{phlox}{rgb}{0.87, 0.0, 1.0}

\definecolor{fluorescentpink}{HTML}{FF1493}

\definecolor{napiergreen}{rgb}{0.16, 0.5, 0.0}

\definecolor{kellygreen}{rgb}{0.3, 0.73, 0.09}

\definecolor{parisgreen}{HTML}{ 50C878 }

\definecolor{palatinateblue}{rgb}{0.15, 0.23, 0.89}

\definecolor{ceruleanblue}{rgb}{0.16, 0.32, 0.75}

\definecolor{brandeisblue}{rgb}{0.0, 0.44, 1.0}

\definecolor{KLMblue}{HTML}{0FC0FC}

\definecolor{cinnamon}{rgb}{0.82, 0.41, 0.12}

\definecolor{darkorange}{rgb}{1.0, 0.55, 0.0}

\definecolor{darktangerine}{rgb}{1.0, 0.66, 0.07}

\definecolor{deepcarrotorange}{rgb}{0.91, 0.41, 0.17}

\definecolor{internationalorange}{HTML}{FF4F00}

\definecolor{persimmon}{HTML}{EC5800}

\definecolor{pumpkin}{HTML}{FF7518}

\definecolor{darkred}{rgb}{1,0,0} %can change the intensity in [0,1]
\definecolor{darkgreen}{rgb}{0,0.7,0}
\definecolor{darkblue}{rgb}{0,0,1}

\hypersetup{colorlinks,
linkcolor=darkblue,
filecolor=darkgreen,
urlcolor=darkred,
citecolor=darkgreen}

\makeatletter
\def\reflb#1#2{\begingroup
    #2%
    \def\@currentlabel{#2}%
    \phantomsection\label{#1}\endgroup
}
\makeatother

\numberwithin{equation}{section}
\newtheorem{Theorem}{Theorem}
\numberwithin{Theorem}{section}

\newtheorem{TheoremX}{Theorem}

\newtheorem   {Lemma}[Theorem]{Lemma}
\newtheorem   {Claim}[Theorem]{Claim}

\newtheorem   {Proposition}[Theorem]{Proposition}
\newtheorem   {Corollary}[Theorem]{Corollary}
\theoremstyle {definition}

\theoremstyle {remark}
\newtheorem   {Remark}[Theorem]{Remark}
\newtheorem   {Example}[Theorem]{Example}

\def    \eps    {\epsilon}

\newcommand{\CA}{{\mathcal A}}

\newcommand{\DD}{{\mathcal D}}

\newcommand{\CI}{{\mathcal I}}

\newcommand{\CS}{{\mathcal S}}
\newcommand{\CV}{{\mathcal V}}

\newcommand{\id}{{\mathit id}}

\newcommand{\const}{{\mathit const}}

\newcommand{\charr}{{\mathit char}\,}

\newcommand{\ty}{\tilde{y}}

\newcommand{\tx}{\tilde{x}}

\newcommand{\tz}{\tilde{z}}
\newcommand{\cz}{\check{z}}
\newcommand{\hz}{\hat{z}}

\newcommand{\tH}{\tilde{H}}

\newcommand{\A}{{\mathcal A}}

\newcommand{\Bb}{{\mathcal B}}
\newcommand{\CB}{{\mathcal B}}

\newcommand{\Pp}{{\mathcal P}}
\newcommand{\PP}{{\mathcal P}}

\newcommand{\Ss}{{\mathcal S}}

\def    \F      {{\mathbb F}}

\def    \R      {{\mathbb R}}

\def    \Z      {{\mathbb Z}}
\def    \N      {{\mathbb N}}
\def    \Q      {{\mathbb Q}}

\def    \CP     {{\mathbb C}{\mathbb P}}

\def    \12     {{\frac{1}{2}}}

\def    \p      {\partial}

\def    \rk     {\operatorname{rk}}

\def    \SH     {\operatorname{SH}}
\def    \COH    {\operatorname{CH}}

\def    \Tor    {\operatorname{Tor}}

\def    \HF     {\operatorname{HF}}

\def    \H      {\operatorname{H}}

\def    \Tor    {\operatorname{Tor}}

\def    \CF      {\operatorname{CF}}

\def    \hmu   {\operatorname{\hat{\mu}}}

\def    \End    {\operatorname{End}}
\def    \Beg    {\operatorname{Beg}}

\def    \cf    {\operatorname{c}}

\def    \TSp     {\widetilde{\operatorname{Sp}}}

\newcommand \beg   {\operatorname{Beg}}
\newcommand \en   {\operatorname{End}}

\newcommand \Cbar {C_{\scriptscriptstyle{bar}}}

\newcommand   \slope {\operatorname{\mathit{slope}}}
\newcommand   \rmax {r_{\max}}
\newcommand   \WW {\widehat{W}}
\newcommand   \UU {\widehat{U}}
\newcommand   \VV {\widehat{V}}

\newcommand   \cfp   {\operatorname{c}^{(p)}}

\begin{document}

%%%%%%%%%%%%%%%%%%%%%%%%%%%%%%
%   TEXT FORMATTING

\setlength{\smallskipamount}{6pt}
\setlength{\medskipamount}{10pt}
\setlength{\bigskipamount}{16pt}

%%%%%%%%%%%%%%%%%%%%%%%%%%

%%%%%%%%%%%%%%%%%%%%%%%%%%

%%%%%%%%%%%           BEGINNING OF  TEXT

%%%%%%%%%%%%%%%%%%%%%%%%%%

\title [Filtered Symplectic Homology and Closed Reeb Orbits]{Filtered
  Symplectic Homology \\ and Closed Reeb Orbits}

\author[Erman \c C\. inel\. i]{Erman \c C\. inel\. i}
\author[Viktor Ginzburg]{Viktor L. Ginzburg}
\author[Ba\c sak G\"urel]{Ba\c sak Z. G\"urel}

\address{E\c C: ETH Z\"urich, R\"amistrasse 101, 8092 Z\"urich,
  Switzerland}
\email{erman.cineli@math.eth.ch}

\address{VG: Department of Mathematics, UC Santa Cruz, Santa Cruz, CA
  95064, USA} \email{ginzburg@ucsc.edu}

\address{BG: Department of Mathematics, University of Central Florida,
  Orlando, FL 32816, USA} \email{basak.gurel@ucf.edu}

\subjclass[2020]{53D40, 37J11, 37J46} 

\keywords{Closed orbits, Reeb flows, Floer homology, symplectic
  homology}

\date{\today} 

\thanks{The work is partially supported by the NSF grants DMS-2304207
  (BG) and DMS-2304206 (VG), the Simons Foundation grants 855299 (BG)
  and MP-TSM-00002529 (VG), and the ERC Starting Grant 851701 via a
  postdoctoral fellowship (E\c{C})}

\begin{abstract}
  We further explore connections between the symplectic homology
  persistence module and the properties of closed Reeb orbits for
  star-shaped domains in higher dimensions. Our first result is that
  the sequence of $S^1$-equivariant spectral invariants over a field
  of positive characteristic is bounded from above, in contrast with
  the case of characteristic zero. We also prove that the dimension of
  the filtered symplectic homology is bounded as a function of the
  action whenever the flow is a pseudo-rotation, i.e., it has finitely
  many prime closed orbits. Finally, we show that a non-degenerate
  Reeb flow has infinitely many prime closed orbits whenever it has
  one closed orbit with negative mean index.
\end{abstract}

\maketitle

\vspace{-0.2in}

%{\small \tableofcontents }

\tableofcontents

\section{Introduction and main results}
\label{sec:intro+results}

\subsection{Introduction}
\label{sec:intro}
In this paper we continue studying connections between the symplectic
homology persistence module and the dynamics of the underlying Reeb
flow, focusing on the existence and properties of closed Reeb orbits, 
 mainly in higher dimensions. While formally independent, the
paper can be viewed as a conceptual follow up to \cite{CGG:Reeb-HZ}
where we proved variants of the multiplicity (aka Ekeland's) and
Hofer--Zehnder conjectures for a certain class of Reeb flows.

Namely, in that paper we showed that a dynamically convex Reeb flow on
the boundary of a star-shaped domain $W\subset \R^{2n}$ must have at
least $n$ prime closed orbits. Furthermore, the flow has either
exactly $n$ or infinitely many such orbits, provided that it is
non-degenerate and $W$ is centrally symmetric. The first part is the
multiplicity conjecture often attributed to Ekeland; cf.\
\cite{Ek}. The second part -- either exactly $n$ or infinitely many
prime closed orbits -- is a variant of the Hofer--Zehnder conjecture;
cf.\ \cite{HZ} where it is stated for Hamiltonian diffeomorphisms of
$\CP^n$.  A key new step in the proofs is the statement that the
filtered symplectic homology of $W$ over any field is one-dimensional
for every positive action threshold whenever the flow is a dynamically
convex pseudo-rotation. (Recall that a Reeb flow is called a
pseudo-rotation if it has finitely many prime closed orbits.)

Here we further explore this type of interplay between symplectic
homology and periodic orbits. The simplest and most-explored
connection is at the level of spectral invariants over $\Q$.  The
spectral invariants over fields of positive characteristic are much
less used and understood. Our first result, Theorem
\ref{thm:spectral}, is that the sequence of $S^1$-equivariant spectral
invariants over a field of characteristic $p>0$ is bounded from above
for every Liouville domain in $\R^{2n}$. For instance, for an
ellipsoid, the upper bound is the action of the first $p$-iterated
orbit.  This fact is rather unexpected. In the case of zero
characteristic, the spectral invariants grow roughly linearly. It also
constitutes a serious difficulty in removing the symmetry condition in
the proof of the Hofer--Zehnder conjecture in \cite{CGG:Reeb-HZ}. One
of our goals here is to initiate the study of positive characteristic
$S^1$-equivariant spectral invariants.

The next three results are related, directly or indirectly, to Reeb
pseudo-rotations. Hypothetically, perhaps under minor additional
assumptions, every Reeb pseudo-rotation is dynamically convex,
non-degenerate and has exactly $n$ prime closed orbits. This
conjecture is wide open and we treat it as a whole circle of different
questions. One implication of dynamical convexity is that all closed
Reeb orbits have mean index greater than or equal to two. Our next
result, Theorem \ref{thm:negative}, shows that a non-degenerate
pseudo-rotation cannot have closed Reeb orbits with negative mean
index. In other words, a non-degenerate Reeb flow on the boundary of a
star-shaped domain must have infinitely many prime closed orbits
whenever it has a closed orbit with negative mean index. The proof
again is based on the analysis of the symplectic homology persistence
module.

Then we show that for a Reeb pseudo-rotation on the boundary of a
Liouville domain in $\R^{2n}$, the dimension of the filtered
symplectic homology is bounded from above as a function of the
action. This is Theorem \ref{thm:bounded}, which is also the first
step in the proof of Theorem \ref{thm:negative}. As we have already
pointed out, this dimension is exactly one for positive actions when
the pseudo-rotation is dynamically convex and the domain is
star-shaped; \cite[Thm.\ C]{CGG:Reeb-HZ}. (The latter condition is
probably inessential.)  Theorem \ref{thm:sh=1} shows that this
requirement on the dimension has strong consequences for the
properties of closed Reeb orbits, refining and streamlining some of
the results from \cite{CGG:Reeb-HZ}. Informally speaking, the main
point of the theorem is that the local homology spaces of the orbits
behave as if the orbits were non-degenerate and, assuming
non-degeneracy, the filtered symplectic homology is similar to that of
an irrational ellipsoid.  However, we do not know if the dimension
requirement alone guarantees that the flow is a pseudo-rotation even
when it is non-degenerate and dynamically convex.

The difficulty in many dynamics questions for Reeb flows in dimensions
greater than three lies in the fact that there is no clear-cut
connection between symplectic homology and the number of prime closed
Reeb orbits. Intuitively, a larger number of orbits should be
reflected by a larger filtered symplectic homology persistence
module. However, there is no obvious way to make this statement
precise. In the multiplicity and Hofer--Zehnder conjectures, we use
the dimension as a function of the action to be a measure of size, but
even that connection is far from direct.

%\begin{Remark}
The interplay between the properties of the symplectic homology and
the dynamics of the Reeb flow goes beyond periodic orbits. For
instance, the barcode growth at a small scale, which is yet another
way to measure the size of a persistence module, reflects such
disparate dynamics features as positive entropy on one side and
complete integrability on the other. We refer the reader to
\cite{CGGM:LN} for a survey of results in that direction and further
references.
%\end{Remark}

\begin{Remark}
  While filtered Floer or symplectic or Morse homology has been used
  for decades in symplectic dynamics, its interpretation as a
  persistence module was originally put forth in \cite{PS} to study
  certain dynamics questions for Hamiltonian diffeomorphisms in a
  context quite different from our setting.  The techniques were
  further developed in, e.g., \cite{UZ}, and the present paper is just
  one out of many applications of this machinery and the entire
  perspective. For instance, in \cite{Sh}, building on \cite{Se,ShZ},
  the persistence module interpretation of Floer homology was central
  to proving a variant of the Hofer--Zehnder conjecture for
  Hamiltonian diffeomorphisms of $\CP^n$.
\end{Remark}

The paper is organized as follows. We state our main results and
provide more context in Section \ref{sec:results}. In Section
\ref{sec:prelim}, we briefly cover the background material:
persistence modules, our conventions, symplectic homology and its
properties, and the $S^1$-equivariant spectral invariants over fields
of positive characteristic. Most of this material is well known, but
some of our results and definitions are not quite standard. Theorem
\ref{thm:spectral} is proved in Section
\ref{sec:proof-spectral}. Theorems \ref{thm:negative} and
\ref{thm:bounded} are established in Section \ref{sec:negative}.
Finally, in Section \ref{sec:pf-sh=1}, we prove Theorem
\ref{thm:sh=1}.

\subsection{Main results and a discussion}
\label{sec:results}

\subsubsection{Bounded equivariant spectral invariants in positive characteristic} 
\label{sec:spectral}
Our first result concerns an unexpected feature of $S^1$-equivariant
spectral invariants (aka capacities), over a field of characteristic
$p>0$. The definition of these invariants is identical to their
standard counterpart over $\Q$. Referring the reader to Section
\ref{sec:spectral-def} for details, for a Liouville domain $W$ we
denote them by $\cfp_1(W), \cfp_2(W),\dotsc$\ .

\begin{TheoremX}%[Spectral Invariants]
  \label{thm:spectral}
  For every Liouville domain $W\subset \R^{2n}$ and every prime $p$,
  the sequence
  \begin{equation}
    \label{eq:spec-seq}
    \cfp_1(W)\leq \cfp_2(W)\leq \cfp_3(W)\leq \dotsb
\end{equation}
is bounded.
\end{TheoremX}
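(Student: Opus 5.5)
The plan is to reduce to a model domain by monotonicity and then compute the model by hand. The $S^1$-equivariant spectral invariants $\cfp_k$ are defined, as over $\Q$, as the action thresholds at which a fixed class in the global positive equivariant homology, of degree roughly $n-1+2k$, first appears in the image of the filtered homology. They are monotone under inclusion of Liouville domains: if $W\subset W'$ then $\cfp_k(W)\leq \cfp_k(W')$, via the Viterbo transfer map, which respects the action filtration and the global identification $\SH^{+,S^1}\cong H_{*}(\CP^\infty)$ up to shift. Every $W\subset\R^{2n}$ lies in a round ball $B$ of some radius. It therefore suffices to show that the sequence $\cfp_k(B)$ is bounded. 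More precisely, I aim to show $\cfp_k(B)\leq pT$ for all $k$, where $T$ is the period of the simple Reeb orbits on $\p B$. This matches the claim that for an ellipsoid the bound is the action of the first $p$-iterated orbit.

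For the ball, the Reeb flow is the Hopf flow. The positive equivariant complex is filtered by the iterates $jT$, $j=1,2,\dots$, and each level is a Morse--Bott family $S^{2n-1}$ with local equivariant homology $H^{S^1}_*(S^{2n-1})$. Here $S^1$ acts on the $j$-th iterate through the $j$-fold cover, so the isotropy is $\Z_j$. Over $\Q$ this local homology is $H_*(\CP^{n-1})$, which is finite-dimensional, and that finiteness is exactly why the invariants grow linearly. Over $\F_p$, for $p\mid j$, it becomes $H_*(\CP^{n-1})\otimes H_*(B\Z_p;\F_p)$, suitably shifted, which is nonzero in every sufficiently large degree. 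The first such level is $j=p$, at action $pT$. So the filtered group $\SH^{+,S^1,\,(0,pT+\eps)}(B;\F_p)$ is nonzero in all large degrees of the relevant parity.

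The remaining point, and the main obstacle, is to show that these high-degree classes at level $pT$ actually map \emph{onto} the global classes defining $\cfp_k$, so that they are not killed by, and do not merely kill, orbits at higher levels. I would first try a module-structure argument. The global homology is $H_*(\CP^\infty)$ up to shift, and it follows from the vanishing of the non-equivariant $\SH(B)$ via the Gysin sequence. There the operator $u$, i.e.\ the $H^*(BS^1)$-action, acts as the shift $k\mapsto k-1$, an isomorphism in high degrees. The local homology at level $pT$ is also a free-like $\F_p[u]$-module in high degrees, because cap product with $u$ on $H_*(B\Z_p;\F_p)$ is an isomorphism in high degrees. The filtered map respects $u$. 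It is therefore enough to show that a single high-degree class from level $pT$ is nonzero globally; then applying $u$ repeatedly, together with a degree-periodicity argument, reaches every large $k$. For that nonvanishing, I would compare with the non-equivariant Gysin sequence. Alternatively, and more concretely, I would verify it first for $n=1$, a disk, where everything is explicit, and then use a Künneth or product-type argument, or a direct Morse--Bott spectral-sequence computation, in which differentials between levels $jT$ and $j'T$ are forced to vanish by degree or $u$-equivariance in the high-degree range. Once $\cfp_k(B)\leq pT$ for all large $k$, monotonicity in $k$ together with inclusion monotonicity gives boundedness of \eqref{eq:spec-seq} for every $W$.
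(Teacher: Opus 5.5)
Your reduction to a round ball by inclusion monotonicity is fine; the paper does the same with an irrational ellipsoid. Your observation that the Morse--Bott level $pT$ carries $\F_p$-equivariant local homology in every large degree is also correct. So is your reduction, via invertibility of $\DD$ in high degrees on both $\COH^{(\eps,c)}$ and $\COH^+$, to the survival of a single high-degree class of parity $n+1$.

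There is a minor inaccuracy. The Euler class $(pc)^n$ of $S^{2n-1}\to ES^1\times_{S^1}S^{2n-1}\to BS^1$ vanishes mod $p$. Hence that local group is one-dimensional in each large degree, not $H_*(\CP^{n-1})\otimes H_*(B\Z_p;\F_p)$.

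The real problem is that the step you call the main obstacle is the whole content of the theorem, and it is left open. Moreover, the formal mechanism you hope for is false: differentials into level $pT$ from higher levels are not forced to vanish in high degrees, and in fact they occur. The level-$pT$ classes in high degree cannot support a differential, since nothing below them contributes there. Level $pT$ has a class in every large degree of parity $n$, but $\COH^+(B;\F_p)$ vanishes in those degrees. So those classes must be hit from higher levels. Whether the parity-$(n+1)$ classes survive is therefore a parity-dependent question that neither degree counting nor $\DD$-equivariance can settle. The $n=1$/K\"unneth idea is not an argument as stated.

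The missing ingredient is the non-equivariant filtered homology, which your first suggestion only gestures at. For $c\in(pT,(p+1)T)$, $\SH^c(B;\F_p)$ is one-dimensional and concentrated in degree $n+2np$, which has the parity of $n$; this is the same as for a nearby irrational ellipsoid. The Leray spectral sequence gives $\dim\COH^c_m\le\dim(\SH^c[u])_m$. Hence $\COH^c_m(B;\F_p)=0$ for every $m$ of parity $n+1$.

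Now take $m=n+2(k-1)$ large, so that $\COH^-_m=\F_p\,\xi_k$, and consider the exact sequence
\[
\COH^c_{m+1}\to\COH^{(\eps,c)}_{m+1}\stackrel{\delta}{\longrightarrow}\COH^-_m\to\COH^c_m .
\]
The first term vanishes. The second term is the nonzero local group at level $pT$, because the levels $jT$ with $j<p$ contribute nothing in high degrees. Therefore $\delta$ is onto, $\xi_k\mapsto 0$ in $\COH^c$, and $\cfp_k(B)\le c$.

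This is exactly the dimension count in the paper's Lemma~\ref{lem:key}. There, vanishing of the non-equivariant structure map $i_a^b$ across a period, together with collapse of the Leray spectral sequence on $I=[a,b)$, forces $j_a^b=0$. By the local $H_*(B\Z_k;\F_p)$ computation, that collapse happens precisely when $I$ contains a $k$-iterated orbit with $p\mid k$. The paper runs this on an irrational ellipsoid, where all orbits are non-degenerate and $\dim\SH^t=1$ is standard. That choice avoids the Morse--Bott bookkeeping your ball would require.
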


This fact is in stark contrast with the behavior of the spectral
invariants $\cf_i^{(0)}$ over zero characteristic, which grow
approximately linearly. As a consequence of the theorem, for $p>0$ the
sequence $\cf_i^{(p)}(W)$ stabilizes whenever the action spectrum is
discrete, e.g., when the flow is non-degenerate.

\begin{Example}[Ellipsoids]
  \label{ex:ellipsoids}
  Let $\p W$ be an irrational ellipsoid in $\R^{2n}$ and
  $x_1, x_2, x_3, \dotsc$ be the closed Reeb orbits on $\p W$, listed
  in the order of increasing action: $\CA(x_1)< \CA(x_2)<\dotsb$. As
  is well known, $\cf_i^{(0)}(W)=\CA(x_i)$ for all $i\in\N$. On the
  other hand, let $x_k$ be the first $p$-iterated orbit on this
  list. Then it is not hard to see from the proof of Theorem
  \ref{thm:spectral} that $\cfp_i(W)=\cf_i^{(0)}(W)=\CA(x_i)$ for
  $i<k$ and $\cfp_i(W)=\cf_k^{(0)}(W)=\CA(x_k)$ for all $i\geq k$.
\end{Example}  

Theorem \ref{thm:spectral} and Example \ref{ex:ellipsoids} lead to 
several questions, which we will briefly discuss in the rest of this
section.

The first one concerns explicit calculations. Spectral invariants and
symplectic capacities are very difficult to calculate and there are
few classes of domains where it has been done. One of them is the
class of convex/concave toric domains where the capacities
$\cf_i^{(0)}$ were calculated in \cite{GH}. Although the case of $p>0$
is more involved, an explicit calculation might still be feasible and
provide (negative) answers to some of the further questions below.

The second question, or rather group of questions, is to what extent
Example \ref{ex:ellipsoids} is representative of the general
situation. For instance, in general without any assumptions on the
flow, does the sequence \eqref{eq:spec-seq} stabilize?  Furthermore,
set $\cfp_\infty(W)=\sup_i\cfp_i(W)$. This is a spectral invariant aka
a symplectic capacity. Is $\cfp_\infty(W)$ the action of a
$p$-iterated orbit, or even the smallest such action value? (We do
not see why the latter should be the case.)  Does the first strict
inequality $\cfp_i(W)<\cf_i^{(0)}(W)$ occur right after the first
$p$-iterated orbit? A closely related question is of the behavior
(e.g., monotonicity) of $\cfp_i(W)$ as a function of $p$.

Finally, it is entirely possible that the bound from Theorem
\ref{thm:spectral} is a general feature of $S^1$-equivariant spectral
invariants in positive characteristic, not specific to Liouville
domains in $\R^{2n}$. The proof of the theorem suggests that this is
so. (However, it is not entirely clear how large the class of domains
for which $S^1$-equivariant spectral invariants are defined, finite
and have reasonable properties is; see Section
\ref{sec:spectral-def}.)

\subsubsection{Negative mean indices} 
\label{sec:negative}
The Reeb variant of the Hofer--Zehnder (HZ) conjecture asserts that,
at least in the non-degenerate case, the Reeb flow on the boundary of
a star-shaped domain in $\R^{2n}$ has either exactly $n$ or infinitely
many prime closed Reeb orbits. While the question is in general
wide-open, it has been a subject of active research and by now the
conjecture, which also has numerous refinements, is supported by a
long list of related or partial results; see, e.g., \cite{AM,
  CGG:Reeb-HZ, DL, DLLW1, DLLW2, DLW, GG:LS, GGM, GM, GK, LZ}. Recall
that a \emph{Reeb pseudo-rotation} is a Reeb flow with finitely many
prime closed orbits. An example is the Reeb flow on an irrational
ellipsoid, but there are also examples of Reeb pseudo-rotations on
$C^\infty$-small perturbations of such ellipsoids with non-trivial
(e.g., ergodic) dynamics; see \cite{Ka}. Thus the conjecture can be
rephrased as that a Reeb pseudo-rotation must have exactly $n$ prime
closed Reeb orbits.

Furthermore, recall that a flow on the boundary of a Liouville domain
in $\R^{2n}$ is said to be dynamically convex if $\mu_-(x)\geq n+1$
for all closed Reeb orbits $x$, where $\mu_-$ is the lower
semi-continuous extension of the Conley--Zehnder index $\mu$. In
particular, when the flow is non-degenerate, we have $\mu_-(x)=\mu(x)$
and the requirement is simply that $\mu(x)\geq n+1$; see \cite{HWZ}
where the notion of dynamical convexity was introduced. Ordinary
geometric convexity implies dynamical convexity as shown in that
paper, but the converse is not true; see \cite{CE1, CE2} for the first
counterexamples.

For a non-degenerate dynamically convex flow on the boundary of
symmetric star-shaped domain, the HZ-conjecture conjecture was proved
in \cite{CGG:Reeb-HZ} and we refer the reader to that paper for a
detailed discussion of the conjecture, its variants and refinements,
and past results. As a closely related question, we conjecture that a
non-degenerate Reeb pseudo-rotations is necessarily dynamically
convex. Let us refer to this statement as the dynamical convexity
conjecture.

Dynamical convexity has numerous simple linear algebra consequences
about the behavior of the sequence of the indices $\mu_-(x^k)$ for the
iterates $x^k$. For instance, this sequence is strictly increasing and
$\hmu(x)>2$, where $\hmu$ is the mean index of $x$; see, e.g.,
\cite{GG:LS} for a discussion and further references. In particular,
$\hmu(x)\geq 0$. Here, as a step towards the proof of the dynamical
convexity conjecture, we show that the latter condition is
automatically satisfied for non-degenerate Reeb pseudo-rotations.

\begin{TheoremX}
  \label{thm:negative}
  Assume that the Reeb flow on the boundary of a star-shaped domain
  $W\subset \R^{2n}$ is non-degenerate and has a closed orbit with
  negative mean index. Then the flow has infinitely many prime orbits.
\end{TheoremX}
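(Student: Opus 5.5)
We argue by contradiction: assume the flow is a non-degenerate pseudo-rotation with prime closed orbits $y_1,\dots,y_m$, and that $x=y_1$ has $\hmu(x)<0$. The first step is Theorem \ref{thm:bounded}, stated later in the introduction and proved together with the present result: for a Reeb pseudo-rotation the dimension of the filtered symplectic homology $\SH^{(-\infty,t)}(W)$ (or $\SH^{t}$, over a field, say $\F_2$ or $\Q$) is bounded by a constant $C$ independent of $t$. I will use this in the graded form: for each degree $d$, the total dimension over all $t$ is bounded. Non-degeneracy then shows that each degree $d$ receives contributions from only finitely many iterates, namely those $y_i^k$ with $|\mu(y_i^k)-d|\le n$. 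Since $\hmu(y_i)\ne 0$, there are $O(1)$ such iterates for each $d$.

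The second step uses the iterates $x^k$. Their Conley--Zehnder indices satisfy $|\mu(x^k)-k\hmu(x)|\le n$, so $\mu(x^k)\to-\infty$ linearly in $k$. Because $W$ is star-shaped, the full symplectic homology $\SH(W)$ vanishes, while the positive part satisfies $\SH^+(W)\cong \H_{*+n-1}(\text{pt})$ with a degree shift, so it is concentrated in the single degree $n+1$ (with the grading conventions of Section \ref{sec:prelim}). In particular $\SH^+$ vanishes in very negative degrees. I then aim to show that the good iterates $x^k$ must be \emph{essential} in the filtered complex, meaning they persist as generators of bars. Consider the subsequence $k$ with $\mu(x^k)\equiv$ const mod 2; for instance, take $k$ even so that all these iterates are good. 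In degree $d_k=\mu(x^k)$, the generator $x^k$ must be killed by a differential to or from an orbit of degree $d_k\pm1$ and with action on the correct side. The only candidates are iterates $y_i^j$ with $\mu(y_i^j)=d_k\pm1$. Those with $\hmu(y_i)>0$ have $j$ bounded for $d_k$ very negative, so their actions are bounded. Those with $\hmu(y_i)<0$ have $j\sim d_k/\hmu(y_i)$.

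The third step produces the contradiction through a counting or action argument. The bars in degrees $d_k$ have both endpoints at actions of iterates. Using the persistence structure, I will argue that infinitely many finite bars start at distinct actions $\CA(x^k)=k\CA(x)\to\infty$. Each such bar contributes to $\dim\SH^{(-\infty,t)}$ in degree $d_k$ over an interval of $t$ starting at $k\CA(x)$. The claim is that these intervals overlap for growing families of $k$, which would make the dimension unbounded and contradict Theorem \ref{thm:bounded}. The natural tool is a Lusternik--Schnirelmann or pigeonhole argument: partner orbits in degree $d_k\pm1$ have actions $j\CA(y_i)$ with $j\hmu(y_i)\approx k\hmu(x)$. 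This makes the bar lengths grow linearly in $k$ whenever the ratios $\CA/\hmu$ differ, so at level $t$ roughly $\varepsilon t$ bars are alive simultaneously.

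The main obstacle is the exceptional case in which all negative-mean-index orbits share the same ratio $\CA(y_i)/\hmu(y_i)$, so that bar lengths stay bounded. In that case I would instead exploit resonance, using $\mu(x^{k+N})=\mu(x^k)+N\hmu(x)$ for a suitable period $N$ in the non-degenerate case. I would then show that the local homology spectral sequence repeats periodically, so that the homology in a window of actions of fixed width recurs infinitely often in negative degrees. Passing to the limit $t\to\infty$ would force a nonzero class in $\SH^+$ in negative degree, contradicting its concentration in degree $n+1$.
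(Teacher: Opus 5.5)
\textbf{There is a genuine gap, and it sits exactly where the proof has to happen.}

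The dichotomy in your third step is misplaced. Since $\SH(W)=0$, Theorem \ref{thm:depth} bounds all bar lengths by $\Cbar$. So a bar joining $x^k$ to an iterate $y^j$ with $|j\hmu(y)-k\hmu(x)|$ bounded forces $|j\CA(y)-k\CA(x)|\le\Cbar$. For large $k$ this only excludes partners $y$ whose ratio $\CA(y)/\hmu(y)$ differs from that of $x$. But $x$ always has the same ratio as itself, so nothing excludes bars joining $x^k$ to $x^{k'}$ with $|k-k'|$ bounded, or to iterates of other orbits with the same ratio. Your ``exceptional case'' is therefore the only case, and the growing-bar-length argument rules out nothing essential.

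Your treatment of that case does not work either.
\begin{itemize}
\item The identity $\mu(x^{k+N})=\mu(x^k)+N\hmu(x)$ fails in general once $x$ has elliptic eigenvalues. Strong non-degeneracy forces irrational rotation numbers, so $N\hmu(x)$ need not be an integer. Only approximate recurrence along subsequences, as in Theorem \ref{thm:IRT}, is available.
\item Even with periodic local data, the barcode is determined by global Floer differentials, not by local homology. No periodicity of the persistence module follows.
\item Nothing at high action can produce a class in $\SH^+$. Since $\SH(W)=0$, the long exact sequence gives $\SH^+(W)\cong\SH^{\eps}(W)$ up to a degree shift. Finite bars of bounded length in very negative degrees are fully consistent with this.
\end{itemize}
There are also minor errors. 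Even iterates of an alternating orbit are exactly the bad ones, so ``take $k$ even'' is backwards; it is simpler to work over $\F_2$, where all orbits are visible. Nothing in the hypotheses excludes prime orbits with $\hmu=0$. Theorem \ref{thm:bounded} does not give your ``graded total dimension'' bound.

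\textbf{The missing idea is the direction constraint in Theorem \ref{thm:beg-end}.} If a bar $J$ begins at $y$ and ends at $z$, then $\mu(z)\ge\deg(J)\ge\mu(y)$. So following bars upward in action, the index never decreases, while $\mu(x^k)\to-\infty$.

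The paper groups the orbits with action in a long interval $I=[a,b]$ into chains $y\mapsto\End(\Beg^{-1}(y))$. The number of chains is at most $\max_t\dim\SH^t(W;\F_2)$, which by Theorem \ref{thm:bounded} is independent of the length of $I$. Pigeonhole then puts two iterates $z^k,z^l$ with $k-l$ large into one chain. This contradicts $\CA(z^k)>\CA(z^l)$ together with $\mu(z^k)<\mu(z^l)$.

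The hard part is making the chains well defined: each orbit with action in $I$ must begin exactly one bar and end exactly one bar. This fails in general. An orbit can begin two bars where the dimension jumps, and coinciding actions (e.g.\ iterates of orbits with rationally dependent periods) make $\beg$ and $\en$ non-unique.

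The paper handles this in Theorem \ref{prop:beg-end}. The Smith inequality gives arbitrarily long intervals on which $\dim\SH^t(W;\F_2)$ is constant and maximal. Lemma \ref{lem:perturbation} separates actions without increasing the dimension, which yields injective beginning and end maps there. Nothing in your proposal plays either of these roles.
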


The proof of this theorem given in Section \ref{sec:negative} heavily
relies on the machinery developed in \cite{CGG:Reeb-HZ} and the
analysis of the symplectic homology persistence module. 

\subsubsection{From symplectic homology to periodic orbits} 
\label{sec:sh=1}
Let $\SH^t(W;\F)$ stand for the filtered symplectic homology of a
star-shaped domain $W \subset \R^{2n}$ with coefficients in a field
$\F$.  The key technical result from \cite[Thm.\ C]{CGG:Reeb-HZ} is
that $\dim\SH^t(W;\F)=1$ for all $t>0$ and every ground field $\F$,
whenever the flow on the boundary $\p W$ is a dynamically convex
pseudo-rotation. This fact is crucial to other proofs in
\cite{CGG:Reeb-HZ}, although it is not clear what exactly the formal
consequences of this property are. For instance, we conjecture that in
the non-degenerate case this condition is equivalent to the flow being
a dynamically convex pseudo-rotation. (One of the issues arising for
degenerate flows is the Floer-theoretic visibility of closed orbits:
when the local symplectic homology of all iterates of $x$ vanishes,
the filtered symplectic homology carries no information about $x$.)
Below we explore the implications of this condition including the
behavior of the local homology of closed Reeb orbits. Our first result
here, setting the stage for further development, is the following.
\begin{TheoremX}
  \label{thm:bounded}
  Assume that the Reeb flow on the boundary of a Liouville domain
  $W\subset \R^{2n}$, i.e., an image of an exact symplectic embedding
  of a Liouville domain, is a Reeb pseudo-rotation. Then
  $\dim \SH^t(W)$ is bounded as a function of $t\in\R$ for any ground
  field $\F$.
\end{TheoremX}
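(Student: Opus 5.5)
The proof will bound $\dim\SH^t(W)$ by counting, through local homology, the contributions of the iterates of the finitely many prime closed orbits $x_1,\dots,x_m$. It rests on two standard facts.

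The first is the action-filtered spectral sequence, or Morse-type inequality, for filtered symplectic homology. For $t$ not in the action spectrum,
\[
\dim \SH^t(W)\le \dim \SH^{\epsilon}(W)+\sum_{\CA(y)<t}\dim \SH^{\mathrm{loc}}(y),
\]
where the sum runs over all closed Reeb orbits $y$ (iterates $x_i^k$) with action below $t$. The constant term accounts for the contribution of $W$ itself, the "constant orbits", and is one-dimensional for $W\subset\R^{2n}$ in the relevant grading. If $t$ lies in the spectrum, I pass to a slightly larger non-spectral value using the standard semicontinuity of persistence modules. This argument is fine in the degenerate case too, since each $y$ is isolated: the spectrum consists of the $k\CA(x_i)$, which is discrete.

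This inequality alone gives only linear growth in $t$, so the second ingredient must control the grading. Let $\SH^t_*(W)$ be graded by the Conley--Zehnder index. By the local homology support property, $\SH^{\mathrm{loc}}_*(x^k)$ is supported in $[\hmu(x^k)-n+1,\hmu(x^k)+n-1]$, and $\hmu(x^k)=k\hmu(x)$. Moreover, $\dim\SH^{\mathrm{loc}}(x^k)$ is bounded uniformly in $k$, because by Gromoll--Meyer/persistence-of-local-homology results there are only finitely many distinct local homologies up to shift among the iterates.

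The central step is to exploit that the total symplectic homology vanishes, $\SH(W)=0$ for $W\subset\R^{2n}$, together with the fact that the maps $\SH^s\to\SH^t$ preserve degree. I would argue degree by degree. In each fixed degree $d$, only iterates with $|k\hmu(x_i)-d|\le n$ contribute. If $\hmu(x_i)\ne0$, this is at most a bounded number of $k$ (about $2n/|\hmu(x_i)|+1$) for each $i$. Hence $\dim\SH^t_d(W)\le C$ uniformly in $d$ and $t$.

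One still has to bound the number of degrees in which $\SH^t_*$ is nonzero, and here the persistence structure is needed. Every bar of the barcode of $\SH_*(W)$ is finite, except that the degree of the class of $W$ is killed as well. So each bar in degree $d$ born at action $a$ (an orbit $y$ with $|\mu(y)-d|\le n$) must die at an orbit $y'$ of degree $d+1$. I would pair endpoints and show that the number of bars alive at time $t$ is bounded. Only bars with a large lifetime matter, and a bar alive across $t$ born at $y=x_i^k$ must die at some $x_j^l$ with $\CA(x_j^l)>t$ and $|l\hmu(x_j)-k\hmu(x_i)|\le 2n$. Since actions and mean indices grow linearly in the iteration number, with ratios $\hmu(x_i)/\CA(x_i)$, a counting argument over each pair $(i,j)$ should bound the number of such bars crossing any $t$.

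Orbits with $\hmu=0$ are the main obstacle; they are also the delicate point in the subsequent Theorem \ref{thm:negative}. Their iterates all sit in a bounded range of degrees, and for them I would try the resonance relation $\sum_i 1/\hmu$-type identities, or directly a boundedness lemma for persistence modules whose bars are born and die at a finite union of arithmetic progressions with index constraints. As a first attempt, I would show that any bar crossing $t$ has length at most some constant $L$, namely the maximum over the finitely many relevant pairs $(i,j)$ of the gap to the next admissible death. Then $\dim\SH^t\le\#\{\text{orbits with action in }[t-L,t]\}$, which is bounded by $\sum_i (L/\CA(x_i)+1)\cdot\max_k\dim\SH^{\mathrm{loc}}(x_i^k)$, a bound independent of $t$.
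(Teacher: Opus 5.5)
\paragraph{The missing bar-length bound.} Your closing step has the right shape and matches how the paper finishes. Once every bar has length at most a constant $L$ independent of its position, the following four facts give the bound:
\begin{itemize}
\item every bar through $t$ begins in $[t-L,t)$;
\item the number of bars beginning at a given action value is given by \eqref{eq:zeta};
\item a window of length $L$ contains at most $\sum_i (L/\CA(x_i)+1)$ action values of iterates;
\item $\dim\SH(x_i^k)$ is bounded in $k$ (the paper takes this from McLean \cite[Thm.\ 3.1]{McL}).
\end{itemize}
The genuine gap is the bar-length bound itself. You justify it by saying a bar crossing $t$ dies within ``the gap to the next admissible death,'' but this does not hold. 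The Conley--Zehnder index only restricts where a bar \emph{can} end. A bar can survive past any number of index-compatible orbits, and where it actually dies is decided by the Floer differential, not by the grading.

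Index bookkeeping cannot substitute for this. Suppose $\rho_i=\hmu(x_i)/\CA(x_i)$ and $\rho_j$ differ, say $\rho_i>\rho_j>0$. Then the constraint $|l\hmu(x_j)-k\hmu(x_i)|\le 2n$ with $k\CA(x_i)<t<l\CA(x_j)$ is satisfied for births anywhere in a window of length roughly $t(1-\rho_j/\rho_i)$. These births are spread over a range of degrees of length proportional to $t$. So neither your per-pair counting nor your degree-by-degree bound rules out linearly many bars through $t$. For orbits with $\hmu(x_i)=0$ the index gives no information at all. Finally, using $\SH(W)=0$ only to conclude that each bar is finite gives no uniform bound.

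\paragraph{How to close it.} The missing ingredient is Theorem \ref{thm:depth}. Since $W$ is exactly embedded in $\R^{2n}$, it is displaceable there. The continuation argument of Section \ref{sec:vanishing} then shows more than $\SH(W)=0$: the structure maps $\SH^s(W)\to\SH^{s+C}(W)$ vanish for all $s$, where $C$ is any constant exceeding the displacement energy. Equivalently, every bar has length at most $\Cbar$, independently of its location and of the field (Irie's observation; see \cite{GS}).

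With this, your final counting goes through verbatim with $L=\Cbar$ and gives the paper's bound \eqref{eq:zeta_upper_bound}. The grading and mean-index analysis becomes unnecessary, including the concern about orbits with $\hmu=0$.

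A minor point: for a general Liouville domain in $\R^{2n}$, $\SH^\eps(W)=\H_*(W,\p W)[n]$ need not be one-dimensional. It is finite-dimensional, which is all you need.
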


The proof of this result is implicitly contained in
\cite{CGG:Reeb-HZ}. We will outline the proof in Section
\ref{sec:bounded-pf}. When, in addition, $W$ is star-shaped and the
flow is dynamically convex, $\dim \SH^t(W)=1$ for all $t>0$;
\cite[Thm.\ C]{CGG:Reeb-HZ}.  One might expect the converse of Theorem
\ref{thm:bounded} to also hold, but this is a very subtle and possibly
deep question. At the time of writing, it is not even known whether
the flow must be a pseudo-rotation when $\dim \SH^t(W)=1$ for all
$t>0$ and the flow is non-degenerate; cf.\ Theorem \ref{thm:sh=1}
below. We believe that these two conditions combined are equivalent to
that the flow is a pseudo-rotation.

\begin{Remark}
  Another situation when $\dim \SH^t(W;\Q)=1$ for all $t>0$ is when
  the flow on the boundary of a star-shaped domain is Besse; see
  \cite{GRT}. One can think of such a flow as an example of a
  generalized pseudo-rotation and in this case the flow is again
  dynamically convex.
\end{Remark}  

Recall that an isolated closed Reeb orbit $x$ is said to be
$\F$-visible if $\SH(x;\F)\neq 0$, where $\SH(x;\F)$ is the local
symplectic homology of $x$ over a field $\F$. Likewise, we could have
said that $x$ is equivariantly $\F$-visible if $\COH(x;\F)\neq 0$,
where $\COH(x;\F)$ is the $S^1$-equivariant local symplectic homology
of $x$ over $\F$.  However, a closed Reeb orbit is $\F$-visible if and
only if it is equivariantly $\F$-visible. This follows from the Gysin
long exact sequence for local symplectic homology; see Remark
\ref{rmk:visibility}. %\eqref{eq:loc-Gysin}.
Hence there is no need to distinguish between these two types of
visibility.

Furthermore, recall that $x$ is said to be strongly non-degenerate if
all iterates $x^k$ are non-degenerate. Then either all indices
$\mu\big(x^k\big)$ have the same parity or the parity alternates. We
call $x$ alternating in the latter case and non-alternating in the
former.  When $x$ is non-degenerate, it is always $\F$-visible when
$\charr \F=2$.  For $\charr \F\neq 2$ such an orbit $x$ is
$\F$-visible if and only if it is good, i.e., $x$ is not an even
iterate of an alternating orbit. Finally, let $\CA(x)$ be the action
(aka the period) of $x$.

\begin{TheoremX}
  \label{thm:sh=1}
  Let $W$ be a Liouville domain in $\R^{2n}$ such that all closed Reeb
  orbits on the boundary $\p W$ are isolated.
  \begin{itemize}

  \item[\reflb{sh1}{\rm{(i)}}] Assume that $\dim \SH^t(W;\Q)=1$ and
    $\dim \SH^t(W;\F)=1$ for some field $\F$ of possibly positive
    characteristic and all $t>0$. Then $\SH(x;\F)$ is supported in two
    consecutive degrees for every $\F$-visible closed Reeb orbit
    $x$. Furthermore, let the support of $\SH(x;\F)$ be $\{q,q+1\}$.
    Then $q$ is the lowest degree where $\COH(x;\F)\neq 0$.

  \item[\reflb{sh2}{\rm{(ii)}}] Assume that the Reeb flow on $\p W$ is
    non-degenerate and $\dim \SH^t(W;\F)=1$ for $\F=\Q$ and $\F=\F_2$
    and all $t>0$. Then the Reeb flow on $\p W$ is dynamically convex,
    has at most $n$ non-alternating prime orbits, and all (good and
    bad) closed Reeb orbits $x$ have the same ratio
    $\CA(x)/\hmu(x)$.
    
  \end{itemize}  
\end{TheoremX}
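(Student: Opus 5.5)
The plan is to study the symplectic homology persistence module $t\mapsto \SH^t(W;\F)$ together with its $S^1$-equivariant companion $t\mapsto\COH^t(W;\F)$. The dimension condition $\dim\SH^t=1$ for all $t>0$ forces the barcode of $\SH$ over $\F$ to be very rigid. Since $\SH(W)=0$ for a domain in $\R^{2n}$, every bar is finite, apart from the bar through $t=0$ that carries the fundamental class. Consequently, at each action value $a=\CA(x)$ exactly one bar ends and one bar begins, or both simultaneously ("a bar is replaced by a bar"). I would then use the standard local-to-global relation. For a spectrum value $a$ and small $\epsilon$, the map $\SH^{a-\epsilon}\to\SH^{a+\epsilon}$ has its cone controlled by $\bigoplus_{\CA(x)=a}\SH(x;\F)$ through a spectral sequence or an exact triangle, and the same holds equivariantly. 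The dimension-one condition then gives
\[
\dim \bigoplus_{\CA(x)=a}\SH(x;\F)\ \ge\ 2 \quad\text{only in a way compatible with one death in degree } q+1 \text{ and one birth in degree } q,
\]
after accounting for cancellations.

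The key step, and the one I expect to be the main obstacle, is ruling out such cancellations: different degrees of $\SH(x;\F)$ might cancel in the spectral sequence, so that the local homology becomes invisible globally. Here I would use both fields. The $\Q$ condition determines the indices of the global generators, which lie in consecutive degrees $n+1,n+3,\dots$ shifted according to the grading convention. The $\F$ condition must then be consistent with them. I would pair this with the Gysin sequence and the local equivariant relation $\COH(x)\to\SH(x)\to\COH(x)[\text{shift } 2]$ (Remark~\ref{rmk:visibility}). If $\SH(x;\F)$ were supported outside two consecutive degrees, the iterates $x^k$ would produce, via local homology persistence under iteration (the mean index grows linearly and the supports move with $k\hmu(x)$), an action window where the global rank exceeds one. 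The first thing I would try is a careful action-window argument around $\CA(x^k)$ for large $k$ chosen so that no other orbit interferes. This is possible because the orbits are isolated and, for suitable $k$, the local homology of $x^k$ equals a shift of that of $x$. The claim that $q$ is the lowest degree of $\COH(x;\F)$ then follows from the Gysin sequence: the lowest nonzero degree of $\SH(x)$ injects from $\COH$ in that degree.

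For (ii), non-degeneracy means that $\SH(x)$ is one-dimensional in degree $\mu(x)$ for visible $x$. With both $\Q$ and $\F_2$ available, bad orbits are seen over $\F_2$ only. Knowing that the global homology over $\Q$ occupies degrees $n+1, n+3,\dots$ with one generator per action interval forces every visible orbit over $\Q$ to have $\mu\ge n+1$. Bad orbits are handled over $\F_2$ by an index comparison with their good underlying orbit, which yields dynamical convexity. The resulting global picture matches that of an irrational ellipsoid: orbits are ordered by action with index increasing monotonically. The count of non-alternating prime orbits comes from the mean index argument used in \cite{CGG:Reeb-HZ}. Summing the densities $1/\hmu(x)$ over the primes must give the global growth rate of degree versus count, which is $1/2$ per generator, so there are at most $n$ non-alternating primes. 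Finally, equality of $\CA(x)/\hmu(x)$ across orbits follows by comparing degree and action asymptotics along iterates. If two orbits had different ratios, the action-ordered sequence of global generators would eventually violate the degree monotonicity forced by the dimension-one barcode.
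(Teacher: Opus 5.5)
Your proposal has two genuine gaps.

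\textbf{Part~\ref{sh1}: the iteration mechanism cannot work.} There is no cancellation to rule out. By \eqref{eq:sum} and \eqref{eq:zeta}, $\sum_{\CA(y)=a}\dim\SH(y;\F)$ is exactly the number of bar endpoints at $a$. So $\dim\SH^t(W;\F)=1$, together with evenness of $\dim\SH(x;\F)$ (the local complex has an even number of generators), already gives $\dim\SH(x;\F)=2$ for every visible $x$. But a two-dimensional $\SH(x;\F)$ supported in, say, degrees $q$ and $q+3$ is equally compatible with $\dim\SH^t(W;\F)=1$; it only means the bar degree jumps by $+4$ or $-2$ at $\CA(x)$. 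Passing to iterates $x^k$ whose local homology is a shift of $\SH(x;\F)$ just reproduces the same gap and never raises the rank of $\SH^t$. Your action-window argument therefore yields no contradiction.

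The missing idea is to compare $\F$ with $\Q$ through $\Z$ via the universal coefficient theorem. The paper does this locally. If $\SH(x;\Z)$ has positive rank, one reduces to $\Q$, where $\SH(x;\Q)=\COH(x;\Q)\oplus\COH(x;\Q)[-1]$ by \eqref{eq:SH-CH}. If $\SH(x;\Z)$ is finite, then at the lowest degree $m$ with $\SH_m(x;\F_p)\neq 0$ there is no $\Tor$ contribution from degree $m-1$. Hence $\SH_m(x;\Z)$ has $p$-torsion, and its $\Tor$ term forces $\SH_{m+1}(x;\F_p)\neq 0$.

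Your remark that the $\F$ condition must be consistent with the $\Q$ degrees can also be made rigorous globally, but you must supply the argument. The condition $\dim\SH^t(W;\F_p)=\dim\SH^t(W;\Q)=1$ forces $\SH^t(W;\Z)$ to have no $p$-torsion. So the $\F_p$-bar and the $\Q$-bar through each $t$ have the same degree, and over $\Q$ the degree jumps only by $0$ or $2$.

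Your Gysin step for $q$ is fine, with one correction. At the lowest degree of $\COH(x;\F)$, the map $\SH_q(x;\F)\to\COH_q(x;\F)$ is surjective; nothing injects into $\SH$.

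\textbf{Part~\ref{sh2}: the bound of $n$ non-alternating primes does not follow from densities.} Counting good orbits of index at most $N$ gives only $\sum_{\text{non-alt}}1/\hmu(y)+\tfrac12\sum_{\text{alt}}1/\hmu(y)\leq\tfrac12$. Equality would require finitely many primes, which is not assumed. More importantly, the inequality says nothing about the number of primes, since nothing bounds the mean index of a prime orbit from above.

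What is needed is simultaneous recurrence. Apply \ref{IR1} of Theorem~\ref{thm:IRT} with $\eta<1$ and $d_j$, $k_{ij}$ even. Then the (good) iterates $y_i^{k_{ij}}$ of all non-alternating primes have indices in $[d_j-n+1,d_j+n-1]$. This interval contains only $n$ integers of parity $n+1$, while $\mu$ is injective on good orbits.

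That injectivity, i.e., Lemma~\ref{lemma:degree_seq}\ref{deg-1}, is itself only asserted in your sketch. Proving it requires the parity bookkeeping between bar degrees and indices together with \eqref{eq:odd}. That is, you need the equivariant input from $\COH^+(W)$ to exclude good orbits whose index has the parity of $n$. Also, for a non-degenerate visible $x$ it is $\COH(x)$, not $\SH(x)$, that is one-dimensional.

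\textbf{What works.} Your handling of bad orbits via monotonicity of $\mu$ in action over $\F_2$ matches the paper. Your argument for the common ratio is correct and simpler than the paper's. Over $\F_2$, $\mu$ is non-decreasing in action on all orbits, and $|\mu(x^k)-k\hmu(x)|\leq n-1$. If $\hmu(x)/\CA(x)>\hmu(y)/\CA(y)$, take $k$ maximal with $k\CA(x)<l\CA(y)$; then $\mu(x^k)>\mu(y^l)$ for large $l$, a contradiction. This avoids the index recurrence theorem and the bar-length bound $\Cbar$ that the paper uses for this step.
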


Here Part \ref{sh1} shows that closed orbits of the flow behave
essentially as if they were non-degenerate.  (There are no known
examples of Reeb flows for which the conditions of Theorem
\ref{thm:sh=1} are satisfied, but the flow is degenerate.) The
assertion that all closed Reeb orbits $x$ have the same ratio
$\CA(x)/\hmu(x)$ is crucial for the Hofer--Zehnder conjecture type
results; see, e.g., \cite{CGG:Reeb-HZ,GG:LS}.  Note, however, that in
Part \ref{sh2}, we do not claim that the flow is a pseudo-rotation,
i.e., it has finitely many closed Reeb orbits. Nor do we assert that
the number of prime good orbits is exactly $n$. (The latter statement
would follow from the former due to \cite{LZ,GK}.)

\begin{Remark}
  \label{rmk:sh=1}
  As is easy to see, $\dim \SH(x;\F)$ is either 2 or 0 for any field
  $\F$ whenever $\dim \SH^t(W;\F)=1$ for all $t$ near $\CA(x)$; cf.\
  \cite{CGG:Reeb-HZ}. We will recall a proof of this fact in Section
  \ref{sec:support} while proving Part \ref{sh1} of the theorem.
\end{Remark}  

\section{Preliminaries}
\label{sec:prelim}

\subsection{Persistence modules}
\label{sec:persistence}
In this section, we review the definition of persistence modules,
focusing on the class of modules relevant to our purposes, and briefly
touch upon their properties. We refer the reader to \cite{PRSZ} for a
general introduction to persistence modules, their applications to
geometry and analysis and further references, although the type of
persistence modules they consider is somewhat more narrow than the
ones we work with here, and also to \cite{BV, CB} for some of the more
general results.

Fix a field $\F$, which we will suppress in the notation. A
\emph{persistence module} $(\CV,\pi)$ is a family of vector spaces
$\CV_s$ over $\F$ parametrized by $s\in \R$, together with a
functorial family $\pi$ of structure maps. These are linear maps
$\pi_{st}\colon \CV_s\to \CV_t$, where $s\leq t$, and functoriality is
understood as that $\pi_{sr}=\pi_{tr}\pi_{st}$ whenever
$s\leq t\leq r$ and $\pi_{ss}=\id$. In what follows, we often suppress
$\pi$ in the notation and simply refer to $(\CV,\pi)$ as $\CV$.  In
such generality, the concept is not particularly useful, and one
usually imposes additional conditions on the vector spaces $\CV_t$ and
the structure maps $\pi_{st}$. These conditions vary depending on the
context. Below we spell out the framework most suitable for our
purposes.

Namely, we require that there is a closed, bounded from below, nowhere
dense closed subset $\CS\subset \R$, which is called the
\emph{spectrum} of $\CV$, and the following four conditions are met:
\begin{itemize}

\item[\reflb{PM1}{\rm{(PM1)}}] The persistence module $\CV$ is
  \emph{locally constant} outside $\CS$, i.e., $\pi_{st}$ is an
  isomorphism whenever $s\leq t$ are in the same connected component
  of $\R\setminus \CS$.

\item[\reflb{PM2}{\rm{(PM2)}}] The persistence module $\CV$ is
  \emph{$q$-tame}: $\pi_{st}$ has finite rank for all $s<t$.
 
\item[\reflb{PM3}{\rm{(PM3)}}] \emph{Left-semicontinuity}: For all
  $t\in\R$,
  \begin{equation}
    \label{eq:semi-cont}
    \CV_t=\varinjlim_{s<t} \CV_s.
  \end{equation}

\item[\reflb{PM4}{\rm{(PM4)}}] \emph{Lower bound}: $\CV_s=0$ when
  $s<s_0$ for some $s_0\in\R$. (Throughout the paper we will assume
  that $s_0=0$.)

\end{itemize}

A few comments on this definition are in order. First, note that as a
consequence of \ref{PM1} and \ref{PM2}, $\CV_s$ is finite-dimensional
and \ref{PM3} is automatically satisfied when $s\not\in
\CS$. Furthermore, in several instances of interest to us, $\CV_s$ is
naturally defined only for $s\not\in\CS$, and then the definition is
extended to all $s\in\R$ by \eqref{eq:semi-cont}. By \ref{PM4}, we can
always assume that $s_0\leq \inf \CS$, i.e., $\CS$ is bounded from
below. We emphasize, however, that $\CS$ is not assumed to be bounded
from above and it is actually not in many examples we are interested
in. In what follows, it will be sometimes convenient to include
$s=\infty$ by setting
$$
\CV_\infty=\varinjlim_{s\to\infty} \CV_s.
$$
Finally, in all examples we encounter here $\CS$ has zero measure and,
in fact, zero Hausdorff dimension, but this fact is never used in the
paper.  Graded persistence modules are defined in a similar fashion.

A basic example motivating requirements \ref{PM1}--\ref{PM4} is that
of the sublevel homology of a smooth function.

\begin{Example}[Homology of sublevels] Let $M$ be a smooth manifold
  and $f\colon M\to \R$ be a proper smooth function bounded from
  below. Set $\CV_s:=\H_*\big(\{f<s\};\F\big)$ with the structure maps
  induced by inclusions. No other requirements are imposed on $f$,
  e.g., $f$ need not be Morse. It is not hard to see that conditions
  \ref{PM1}--\ref{PM4} are met with $\CS$ being the set of critical
  values of $f$. We note that one can have $\dim \CV_s=\infty$ for
  $s\in\CS$ already when $M=S^1$, unless $f$ meets some additional
  conditions on $f$, e.g., that $f$ is real analytic or the critical
  points of $f$ are isolated.
\end{Example}

An \emph{interval persistence module} $\F_{(a,\,b]}$, where
$-\infty<a<b\leq \infty$, is defined by 
$$
V_s:=\begin{cases}
  \F & \text{ when } s\in (a,\,b],\\
  0 & \text{ when } s \not\in (a,\,b],
\end{cases}
$$
and $\pi_{st}=\id$ if $a<s\leq t\leq b$ and $\pi_{st}=0$
otherwise. Interval modules are examples of simple persistence
modules, i.e., persistence modules that cannot be decomposed as a
(non-trivial) direct sum of other persistence modules.

A key fact that we will use in the paper is the normal form or
structure theorem asserting that every persistence module meeting the
above conditions can be decomposed as a direct sum of a countable
collection (i.e., a countable multiset) of interval persistence
modules. Moreover, this decomposition is unique up to re-ordering 
the sum. (In fact, conditions \ref{PM1}--\ref{PM4} are far from
optimal and can be considerably relaxed.) We refer the reader
\cite{BV, CB} for proofs of this theorem for the class of persistence
modules considered here and further references, and also, e.g., to
\cite{CZCG, ZC} for previous or related results.

This multiset $\CB(\CV)$ of intervals entering this decomposition is
referred to as the \emph{barcode} of $\CV$ and the intervals occurring
in $\CB(\CV)$ as \emph{bars}. When $\CV$ is graded, we have a degree
assigned to every bar.

\subsection{Filtered symplectic homology}
\label{sec:SH0}
In this section, we briefly discuss filtered symplectic homology as a
persistence module following \cite{CGGM:Entropy, CGGM:Inv}.

\subsubsection{Conventions, notation, semi-admissible Hamiltonians, the Floer
equation}
\label{sec:setting}
We start by spelling out our conventions and notation, which are
essentially identical to the ones used in \cite{CGGM:Entropy,
  CGGM:Inv, GG:LS}, and also recalling several elementary properties
of (semi-)admissible Hamiltonians to be used later.

Let $\alpha$ be the contact form on the boundary $M=\p W$ of a
Liouville domain $W^{2n\geq 4}$. We will also use the same notation
$\alpha$ for a primitive of the symplectic form $\omega$ on
$W$. Furthermore, we will require that $c_1(TW)=0$.  As usual, denote
by $\WW$ the symplectic completion of $W$, i.e.,
$$
\WW=W\cup_M M\times [1,\,\infty)
$$
with the symplectic form $\omega=d\alpha$ extended from $W$ to
$M\times [1,\infty)$ as
$$
\omega := d(r\alpha),
$$
where $r$ is the coordinate on $[1,\,\infty)$. Sometimes it is
convenient to have the function $r$ also defined on a collar of
$M=\p W$ in $W$. Thus we can think of $\WW$ as the union of $W$ and
$M\times [1-\eta,\,\infty)$ for small $\eta>0$ with
$M\times [1-\eta,\, 1]$ lying in $W$ and the symplectic form given by
the same formula.

The action spectrum $\CS(\alpha)\subset (0,\infty)$ is the set of all
actions (aka periods) of closed orbits of the Reeb flow of $\alpha$
on $M$. This is a closed, zero measure set.

We say that a closed Reeb orbit $y$ in $M$ is the $k$th \emph{iterate}
of a closed orbit $x$ and write $y=x^k$ if $y(t)=x(kt)$ up to the
choice of initial conditions on $x$ and $y$. (In what follows we will
always identify the closed orbits $t\mapsto x(t)$ and
$t\mapsto x(t+\theta)$ for all $\theta\in\R$.) A closed orbit $x$ is
\emph{prime} if it is not an iterate of any other closed orbit.

A closed Reeb orbit $x$ is said to be \emph{non-degenerate} if $1$ is
not an eigenvalue of its Poincar\'e return map. The Reeb flow on $M$
is non-degenerate if all closed orbits, prime and iterated, are
non-degenerate. We denote the Conley--Zehnder index of $x$ by
$\mu(x)$; see, e.g., \cite{SaZ}.  When $x$ is non-degenerate, we call
it \emph{non-alternating} if the number, counted with algebraic
multiplicity, of real negative eigenvalues in $(-1,0)$ of its
Poincar\'e return map is even. Otherwise, we call $x$
\emph{alternating}. A prime orbit $x$ is non-alternating if and only
if all entries in the sequence $\mu\big( x^k \big)$ have the same
parity and alternating if and only if the parity alternates. A
non-degenerate even iterate of an alternating closed orbit is called
\emph{bad}; all other non-degenerate closed orbits are called
\emph{good}.

Most of the Hamiltonians $H\colon \WW\to \R$ considered in this
section are constant on $W$ and depend only on $r$ outside $W$, i.e.,
$H=h(r)$ on $M\times [1,\,\infty)$, where the $C^\infty$-smooth
function $h\colon [1,\,\infty)\to \R$ is required to meet the
following three conditions:
\begin{itemize}
\item $h$ is strictly monotone increasing;
\item $h$ is convex, i.e., $h''\geq 0$, and $h''>0$ on $(1,\, \rmax)$
  for some $\rmax>1$ depending on~$h$;
\item $h(r)$ is linear, i.e., $h(r)=ar-c$, when $r\geq \rmax$.
\end{itemize}
In other words, the function $h$ changes from constant on $W$ to
convex in $r$ on $M\times [1,\, \rmax]$, strictly convex on the
interior, to linear in $r$ on $M\times [\rmax,\, \infty)$.

We will refer to $a$ as the \emph{slope} of $H$ (or $h$) and write
$a=\slope(H)$. The slope is often, but not always, assumed to be
outside the action spectrum of $\alpha$, i.e.,
$a\not\in\CS(\alpha)$. We call $H$ \emph{admissible} if
$H|_W=\const<0$ and \emph{semi-admissible} when $H|_W\equiv 0$. (This
terminology differs from the standard usage, and we emphasize that
\emph{admissible Hamiltonians are not semi-admissible}.) When $H$
satisfies only the last of the three conditions, we call it
\emph{linear at infinity}.

The difference between admissible and semi-admissible Hamiltonians is
just an additive constant: $H- H|_W$ is semi-admissible when $H$ is
admissible. Hence the two Hamiltonians have the same filtered Floer
homology up to an action shift. For our purposes, semi-admissible
Hamiltonians are notably more suitable due to the $H|_W\equiv 0$
normalization.

The Hamiltonian vector field $X_H$ is determined by the condition
$$
\omega(X_H,\, \cdot)=-dH,
$$
and, on $M\times [1,\,\infty)$, 
$$
X_H=h'(r) R_\alpha,
$$
where $R_\alpha$ is the Reeb vector field. We denote the Hamiltonian
flow of $H$ by $\varphi_H^t$, the Reeb flow of $\alpha$ by
$\varphi_\alpha^t$, where $t\in \R$, and the Hamiltonian
diffeomorphism generated by $H$ by $\varphi_H:=\varphi_H^1$.

By a \emph{$\tau$-periodic orbit} of $H$ we will mean one of several
closely related but distinct objects. It can be a $\tau$-periodic
orbit of $\varphi_H$ and then $\tau\in\N$. Alternatively, it can stand
for a $\tau$-periodic orbit of the flow $\varphi_H^t$ with
$\tau\in (0,\,\infty)$. Furthermore, working with periodic orbits of
flows or maps, we might or might not have the initial condition
fixed. For instance, without an initial condition fixed, a
non-constant 1-periodic orbit of the flow of $H$ gives rise to a whole
circle of 1-periodic orbits (aka fixed points) of
$\varphi_H$. Likewise, a prime $\tau$-periodic orbit of $\varphi_H$
comprises a set of $\tau$-periodic points.  In most cases, the exact
meaning should be clear from the context and is often immaterial; when
the difference is essential we will specify whether an orbit is of the
flow or the diffeomorphism and if the initial condition is fixed or
not.

Every $T$-periodic orbit $z$ of the Reeb flow with $T<a=\slope(H)$
gives rise to a 1-periodic orbit $\tz=(z,r_*)$ of the flow of $H$ with
$r_*$ determined by the condition
\begin{equation}
  \label{eq:level}
h'(r_*)=T.
\end{equation}
Clearly, $\tz$ lies in the shell $1<r<\rmax$, and we have a one-to-one
correspondence between 1-periodic orbits of $H$ and the periodic
orbits of $\varphi_\alpha^t$ with period $T<a$ whenever
$a\not\in \CS(\alpha)$. In the pair $\tz=(z,r_*)$, we usually view
$\tz$ as a 1-periodic orbit of the flow $\varphi^t_H$ of $H$ or a
circle of fixed points of the Hamiltonian diffeomorphism $\varphi_H$,
while $z$, contrary to what the notation might suggest, is
parametrized by the Reeb flow but not as a projection of $\tz$ to
$M$. (By \eqref{eq:level}, the two parametrizations of $z$ differ by
the factor of $h'(r_*)=T$.) Fixing an initial condition on $z$
determines an initial condition on $\tz$, and the other way around. In
particular, $z$ gives rise to a whole circle $\tz(S^1)$ of fixed
points of $\varphi_H$.

We say that a $T$-periodic orbit $z$ of the Reeb flow is
\emph{isolated} (as a periodic orbit) if for every $T'> T$ it is
isolated among periodic orbits with period less than $T'$. Clearly,
all periodic orbits of $\alpha$ are isolated if and only if for every
$T'$ the number of periodic orbits with period less than $T'$ is
finite.  For instance, a non-degenerate periodic orbit is isolated.
Note that $\tz$ is isolated as a 1-periodic orbit of the flow of $H$
if $z$ is isolated. No fixed point of $\varphi_H$ on $\tz(S^1)$ is
isolated, but $\tz$ is Morse--Bott non-degenerate, as the set of fixed
points $\tz(S^1)$, if and only if $z$ is non-degenerate.

The action functional $\CA_H$ is defined by
\[
  \CA_H(\gamma)=\int_\gamma\hat{\alpha}-\int_{S^1} H(\gamma(t))\, dt,
\]
where $\gamma\colon S^1=\R/\Z\to \WW$ is a smooth loop in $\WW$ and
$\hat{\alpha}$ is the Liouville primitive $\alpha$ of $\omega$ on $W$
and $\hat{\alpha}=r\alpha$ on $M\times [1-\eta,\,\infty)$ for a
sufficiently small $\eta>0$. More explicitly, when
$\gamma\colon S^1\to M\times [1,\,\infty)$, we have
\[
\CA_H(\gamma)= \int_{S^1} r(\gamma(t))\alpha\big(\gamma'(t)\big)\, dt
- \int_{S^1} h\big(r(\gamma(t))\big)\, dt.
\]
Thus when $\gamma=\tz=(z,r_*)$ is a 1-periodic orbit of $H$, the
action can be expressed as a function of $r_*$ only:
$$
\CA_H(\tz)=A_h(r_*),
$$
where
\begin{equation}
  \label{eq:AH}
  A_h\colon [1,\,\infty)\to [0,\,\infty)\textrm{ is given by } A_h(r)=r
  h'(r)-h(r).
\end{equation}
Sometimes we will also denote this \emph{action function} by $A_H$.
It is easy to see that this is a monotone increasing function;
\cite[Sec.\ 2.2]{CGGM:Entropy}.

Next fix an almost complex structure $J$ on $\WW$ satisfying the
following conditions:
\begin{itemize}
\item $J$ is compatible with $\omega$, i.e., $\omega(\cdot,\, J\cdot)$
  is a Riemannian metric,
\item $J r\p /\p r=R_\alpha$ on the cylinder $M\times [1,\,\infty)$,
\item $J$ preserves $\ker (\alpha) $.  
\end{itemize}
The last two conditions are equivalent to that
$$
%\begin{equation}
%  \label{eq:complex_strc}
dr\circ J=-r\alpha.
% \end{equation}
$$
We call such almost complex structures \emph{admissible}. If the first
condition still holds on $\WW$, and the second and the third
conditions are met only outside a compact set while within a compact
set $J$ can be time-dependent and 1-periodic in time, we call $J$
\emph{admissible at infinity}.

Let $H$ be a Hamiltonian linear at infinity and let $J$ be an
admissible at infinity almost complex structure. The Floer equation is
the $L^2$-anti-gradient flow equation for $\CA_H$.  With our
conventions, this equation reads
\begin{equation}
\label{eq:floer_2}
\p_s u-J\big(\p_t u- X_H(u)\big)=0,
\end{equation}
where $u\colon \R\times S^1\to \WW$ and $(s,t)$ are the coordinates on
$\R\times S^1$ with $S^1=\R/\Z$. By construction, the function
$s\mapsto \CA_H\big(u(s,\,\cdot)\big)$ is decreasing.

Note that the leading term of this equation is the $\p$-operator, but
not the $\bar{\p}$-operator.  In other words, when $H\equiv 0$,
solutions of \eqref{eq:floer_2} are anti-holomorphic rather than
holomorphic curves. Nonetheless, the standard properties of the
solutions of the Floer equation readily translate to our setting,
e.g., via the change of variables $s\mapsto -s$. We will often refer
to solutions $u$ of the Floer equation as \emph{Floer cylinders}.

By construction, when $u$ is asymptotic to $\ty_1$ at $+\infty$ and to
$\ty_0$ at $-\infty$, we have
$$
% \begin{equation}
% \label{eq:Energy-Action}
E(u)=\CA_H(\ty_0)-\CA_H(\ty_1),  %=A_H(r^+)-A_H(r^-).
% \end{equation}
$$
where the energy $E(u)$ is given by
$$
%\begin{equation}
%\label{eq:energy}  
E(u)=\int_{S^1\times\R}\|\p_s u\|^2\,dt\,ds.
%\end{equation}
$$
When $H$ is semi-admissible, we can be more precise.  Namely, let $u$
be asymptotic to $\tx=(x, r^+)$ at $-\infty$ and to $\ty=(y,r^-)$ at
$+\infty$. Then
$$
%\begin{equation}
%  \label{eq:Energy-Action2}
E(u)=\CA_H(\tx)-\CA_H(\ty)=A_H(r^+)-A_H(r^-).
%\end{equation}
$$
Here, due to our conventions, $r^+\geq r^-$, and hence the notation.

\subsubsection{Symplectic homology}
\label{sec:SH-def}
The first versions of symplectic homology were introduced in
\cite{CFH, Vi} and since then the theory has been further
developed. We refer the reader to, e.g., \cite{Abu, Bo, BO0, BO,
  BO:Gysin, Se:biased} for the definitions and basic constructions,
which we mainly omit. Here, treating equivariant and non-equivariant
symplectic homology as persistence modules, we closely follow
\cite{CGGM:Entropy, GG:LS}.

Fix a field $\F$ which we suppress in the notation.  Let $H$ be linear
at infinity with the slope outside $\CS(\alpha)$ and $J$ admissible at
infinity. Then the filtered Floer homology $\HF^\tau(H)$ of $H$ over
$\F$ is defined regardless of whether $H$ is non-degenerate or not.
Indeed, let $\CS(H)$ be the action spectrum of $H$. When
$\tau\not\in \CS(H)$, we set $\HF^\tau(H):=\HF^\tau(\tH)$, where $\tH$
is a sufficiently small perturbation of $H$, depending on $\tau$. For
$\tau\in\CS(H)$, we define $\HF^\tau(H)$ as the left limit of
$\HF^{\tau'}(H)$ as $\tau'\to \tau-$ and $\tau'\not\in\CS(H)$; cf.\
\ref{PM3}. So defined, the filtered Floer homology is a graded
persistence module over $\F$.

For instance, assume that the Reeb flow is non-degenerate and $H$ is
semi-admissible with $a=\slope(H)\not\in\CS(\alpha)$. Then, as we have
mentioned in the previous section, every non-trivial 1-periodic orbit
of $H$ has the form $\tz=(z,r_*)$, where $z$ is a closed Reeb orbit of
period less than $a$. Then one can find a non-degenerate perturbation
$\tH$ such that $z$ gives rise to two generators $\hz$ and $\cz$ in
$\CF(\tH)$. If the grading is defined and $m$ is the Conley--Zehnder
index of $z$, the degree of $\hz$ is $m+1$ and the degree of $\cz$ is
$m$. In addition, $\tH$ is a $C^2$-small function on $W$ with each
critical point contributing one generator to the Floer complex.

In contrast with Hamiltonian Floer homology on closed manifolds, here
a homotopy does not in general induce a continuation map between the
homology for two distinct linear at infinity Hamiltonians. Let $H_s$,
$s\in\R$, be a homotopy between two such Hamiltonians $H_0$ and $H_1$,
i.e., $H_s$ is a family of linear at infinity Hamiltonians such that
$H_s=H_0$ when $s$ is close to $-\infty$ and $H_s=H_1$ when $s$ is
close to $+\infty$. (In what follows, we will take the liberty to have
homotopies parametrized by $s\in [0,\,1]$ or some other finite
interval rather than $\R$.) There are two cases where a homotopy gives
rise to a map in Floer homology.

The first one is when all Hamiltonians $H_s$ have the same slope. Then
the homotopy induces a continuation map
\[
\HF^\tau(H_0)\to \HF^{\tau +C}(H_1),
\]
shifting the action filtration by
\[
C=\int_{-\infty}^\infty
\, \max_{z\in\WW} \, \max\{0,-\p_s H_s (z)\}\,ds.
\]
Moreover, it is well known and not hard to show that $\HF(H)$ does not
change as long as $\slope(H)$ stays outside of $\CS(\alpha)$.

The second case is when $H_s$ is monotone increasing, i.e., the
function $s\mapsto H_s(z)$ is monotone increasing for all $z\in
\WW$. In particular, the function $s\mapsto \slope(H_s)$ is also
monotone increasing. Note that, while $\slope(H_0)$ and $\slope(H_1)$
are still required to be outside $\CS(\alpha)$, the intermediate
slopes $\slope(H_s)$ can pass through the points of
$\CS(\alpha)$. Such a homotopy induces a map
$$
\HF^\tau(H_0)\to \HF^{\tau}(H_1),
$$
preserving the action filtration. Moreover, it is well known and not
hard to show that the global homology $\HF(H_s)$ does not change as
long as $\slope(H_s)$ stays outside of $\CS(\alpha)$.

The \emph{filtered symplectic homology} $\SH^\tau(\alpha)$ or
$\SH^\tau(W)$ is defined as
\begin{equation}
  \label{eq:SH}
\SH^\tau(W):=\varinjlim_H \HF^\tau(H),
\end{equation}
where traditionally the limit is taken over all Hamiltonians linear at
infinity and such that $H|_W<0$. Since admissible (but not
semi-admissible) Hamiltonians form a co-final family, we can limit $H$
to this class.  When working with this definition, it is useful to
keep in mind that, as is not hard to see,
\begin{equation}
  \label{eq:spectra-conv}
\CS(H)\to\{ 0\}\cup\CS(\alpha)
\end{equation}
uniformly on compact intervals. Furthermore, even though $H|_W=0$ for
semi-admissible Hamiltonians rather than $H|_W<0$, one can take the
limit over all such Hamiltonians in \eqref{eq:SH}. This fact, which
readily follows from the definition, is useful for computations, and
this is how we will usually treat \eqref{eq:SH} in what follows.

What is even more useful for computations is that no limit is needed
in the definition of the filtered symplectic homology. Namely, we have
$\HF(H)=\SH^a(W)$ for every semi-admissible Hamiltonian $H$ with
$a=\slope(H)\not\in \CS(\alpha)$. Moreover, for any two such
Hamiltonians $H_0$ and $H_1$ the persistence modules $\HF^\tau(H_0)$
and $\HF^\tau(H_1)$, differ in the obvious sense by a
reparametrization. This reparametrization can be made arbitrary close
to the identity for a suitable choice of $H_0$ and $H_1$. Furthermore,
let us truncate the persistence module $\SH^\tau(W)$ at $\tau=a$ by
turning all bars passing through $a$ into infinite bars and removing
all bars beginning above $a$. (This is not the standard truncation of
persistence modules.) Then the persistence module $\HF(H)$ is again a
reparametrization of this truncated symplectic homology, and moreover
the reparametrization is arbitrarily close to the identity for a
suitable choice of $H$. We refer the reader to \cite{CGGM:Entropy} and
\cite{CGGM:Inv} for the proofs of these facts.

The family $\SH^\tau(W)$ with natural structure maps is a graded
persistence module.

For an interval $I=[a,b)$ with end-points outside $\CS(\alpha)$, the
symplectic homology $\SH^I(W)$ is defined as the limit of the Floer
homology $\HF^I(H)$ similarly to \eqref{eq:SH}. This definition
extends to all intervals ``by continuity''. Thus, for instance,
$\SH^\tau(W)=\SH^{(-\infty,\tau)}(W)$.  We have the long exact
sequence
$$
\cdots\to \SH^a(W)\to \SH^b(W)\to \SH^I(W)\to \cdots .
$$
Furthermore, when $\delta>0$ is sufficiently small,
e.g., $\eps<\min\CS(\alpha)$,
\begin{equation}
\label{eq:SH-small-delta}
\SH^\eps(W):=\H_*(W,\p W)[n];
\end{equation}
see, e.g., \cite{BO, Vi}. 

Replacing the Floer homology by the $S^1$-equivariant Floer homology
everywhere in the above constructions, we obtain the filtered
$S^1$-equivariant symplectic homology, which we denote by
$\COH^\tau(W)$ or $\COH^I(W)$, etc. The family $\COH^\tau(W)$ with
natural structure maps is again a graded persistence module.  Just as
the ordinary symplectic or Floer homology, the equivariant symplectic
homology is actually defined over $\Z$ and hence over any ring. As a
consequence, the universal coefficient formula applies.  We set
$\COH^+(W):=\COH^{(\eps,\infty)}(W)$ and $\COH^-(W):=\COH^{\eps}(W)$,
where $\eps>0$ is sufficiently small, i.e., $0<\eps<\min\CS(\alpha)$.

\begin{Example}
  \label{ex:CH}
When $W\subset \R^{2n}$ is a star-shaped domain,
\begin{equation}
  \label{eq:CH}
  \dim\COH_m^+(W)=
  \begin{cases}
    1\textrm{ when } m=n+1+2i,\, i\geq 0,\\
    0\textrm{ otherwise;}
  \end{cases}
\end{equation}
see, e.g., \cite{BO:Gysin}.
\end{Example}

Clearly, we have the long exact sequence
\begin{equation}
  \label{eq:COH-long}
  \cdots\to \COH^-_m(W)\to \COH_m(W)\to \COH^+_m(W)
  \stackrel{\delta}{\longrightarrow}
  \COH_{m-1}^-(W)\to\cdots .
\end{equation} 
Furthermore, up to a shift of degrees,
\begin{equation}
  \label{eq:COH-}
 \COH^-(W)=\H_{*}(W,\p W)\otimes \H_*(BS^1)=\H_{*}(W,\p W)[u];
\end{equation}  
cf.\ \eqref{eq:SH-small-delta}.

When the Reeb flow on $\p W$ is non-degenerate and $\F=\Q$ or more
generally $\charr\F=0$, we can interpret $\COH^+(W;\Q)$ as the
homology of a certain complex generated by good closed Reeb orbits;
see \cite{GG:LS}. (For $\COH(W)$, one has to add a sequence of
generators for each critical point in $W$ of an admissible
non-degenerate Hamiltonian.) However, this is no longer true when $\F$
has positive characteristic. This fact is crucial for
Theorem~\ref{thm:spectral}.

Filtered symplectic homology is functorial with respect to exact
symplectic embeddings of Liouville domains. Namely, such an embedding
$V \subset W$ gives rise to morphisms of persistence modules
$\SH^\tau(W)\to \SH^\tau(V)$ and $\COH^\tau(W)\to \COH^\tau(V)$ called
the \emph{Viterbo transfer}; \cite{Vi} and, e.g., \cite{GH}.

Filtered equivariant and non-equivariant symplectic homology fit
together into the Gysin exact sequence
$$
\cdots \to \SH_m^I(W)\to \COH^I_m(W)
\stackrel{\DD}{\longrightarrow}  \COH^I_{m-2}(W)\to
\SH^I_{m-1}(W)\to 
\cdots 
$$
over any ring; see, e.g., \cite{BO:Gysin}. Strictly speaking, the
construction of symplectic homology in \cite{BO:Gysin} uses $\Q$ as
the coefficient field and a semi-infinite interval as $I$, but it
carries over word-for-word to $\Z$, and hence to any ring, and an
arbitrary interval $I$; see, e.g., \cite[Sect.\ 2]{GG:LS} for a
detailed discussion of the shift operator $\DD$ and also \cite{GH},
and \cite{Abu} for a non-equivariant construction over $\Z$.

Finally, there is a variant of the Leray spectral sequence 
\begin{equation}
  \label{eq:Leray}
E^r \Rightarrow \COH^I(W)
\end{equation}
with
$$
E^1=E^2=\SH^I(W)\otimes \H_*(BS^1)
= \SH^I(W)[u], \textrm{ where } \deg u = 2 .
$$
For instance, $\COH^I(W)=0$ whenever $\SH^I(W)=0$. In this spectral
sequence, $E^r_{q,p}=0$ when $q$ is odd since $\deg u=2$. Hence, all
odd differentials vanish, and $E^1=E^2$, $E^3=E^4$, etc. Note also
that the notation $E^2=\SH^I(W)[u]$, where we think of $E^2$ as
polynomials in $u$ with coefficients in the symplectic homology is
somewhat misleading. The pages are not naturally modules over
$\F[u]=\H_*(BS^1;\F)$ and the differentials do not commute with the
multiplication by $u$. However, the pages and $\COH^I(W)$ are modules
over $\F[u^{-1}]=\H^*(BS^1;\F)$. On $E^\infty$, the multiplication by
$u^{-1}$ is induced by the shift operator $\DD$; cf.\ \cite{GG:LS}.

\subsubsection{From local to filtered symplectic homology}
\label{sec:SH-loc}
Let $x$ be an isolated closed Reeb orbit, not necessarily prime. Then
we have the local symplectic homology of $x$, equivariant and
ordinary, defined. Referring the reader to, e.g.,
\cite{CGG:Reeb-HZ,GG:LS} for a much more detailed discussion, here we
only briefly review the definition and basic properties to the extent
used in the paper.

Fix an isolating neighborhood $U$ of $x(\R)$ in $M$, i.e., $U$ is a
neighborhood of $x$ containing no other closed orbit of period close
to $\CA(x)$.  Set $\UU=U\times (1-\eps,1+\eps)\subset \WW$ and let
$H=h(r)$ be a Hamiltonian on $\WW$ such that $h'(1)=\CA(x)$ and
$h''(1)>0$. Then $x$ gives rise to an isolated set $S$ of 1-periodic
orbits of $H$ with initial conditions on $x(\R)$. By definition, the
symplectic homology $\SH(x;\F)$ is the local Floer homology
$\HF(H;\F)$ of $H$ at $S$, and $\COH(x;\F)$ is the $S^1$-equivariant
local Floer homology.

For instance, when $x$ is non-degenerate and good and $m=\mu(x)$, we
have $\COH(x)=\F$ supported in degree $m$ and $\SH(x)$ is $\F$ in
degrees $m$ and $m+1$ and zero otherwise. If $x$ is bad, the same is
true when $\charr \F=2$, and the local homology is zero when
$\charr \F\neq 2$.

For local symplectic homology we have analogues of the Leray spectral
sequence, \eqref{eq:Leray}, with similar properties and the Gysin
exact sequence
\begin{equation}
  \label{eq:loc-Gysin}
  \dotsb \to \SH_m(x;\F)\to \COH_m(x;\F)
  \stackrel{\DD}{\longrightarrow}
  \COH_{m-2}(x;\F)\to \SH_{m-1}(x;\F)\to \dotsb .
\end{equation}
Moreover, the shift operator $\DD$ vanishes for the local homology
when $\charr \F=0$; \cite{GG:LS}.  (This is no longer true when
$\charr \F > 0$ unless $\charr \F$ and the order of iteration of $x$
are relatively prime.)  As a consequence,
\begin{equation}
\label{eq:SH-CH}
\SH(x;\Q)=\COH(x;\Q)\oplus \COH(x;\Q)[-1] .
\end{equation}

Local symplectic homology spaces are the building blocks for the
filtered symplectic homology. This principle can be formalized in
several ways.  For instance, assume that $\tau$ is the only point of
$\CS(\alpha)$ in an interval $I=[a,b)$ with $a>0$, and all closed Reeb
orbits with action $\tau$ are isolated. Then
$$
%\begin{equation}
%  \label{eq:local-filt}
\sum_{\CA(x)=\tau}\dim \SH_m(x) =\dim \SH^I_m(W)
%\end{equation}
$$
In fact,

\begin{equation}
\label{eq:sum}
  \bigoplus_{\CA(x)=\tau}\SH(x) =\SH^I(W)
\end{equation}
by the definition of local symplectic homology. Therefore, we have the
long exact sequence
  \begin{equation}
    \label{eq:long-exact}
   \dotsb \to\SH^a_m(W)\to \SH^b_m(W)
   \to\bigoplus_{\CA(x)=\tau}\SH_m(x)
   \to\SH^a_{m-1}(W)\to \dotsb .
 \end{equation}
 Moreover, for any field $\F$ and any interval $I$, there exists a
 spectral sequence
\begin{equation}
  \label{eq:spec-SH}
  E^r\Rightarrow \SH^I(W) \textrm{ with }
  E^1=\bigoplus_{\CA(x)\in I}\SH(x),
\end{equation}  
where, when $0\in I$, we treat $W$ as a collection of ``orbits'' with
action 0 as described above. This spectral sequence is simply
associated with the action filtration; see, e.g., \cite[Sec.\
2]{GG:LS}.

We have analogues of \eqref{eq:sum} and \eqref{eq:long-exact} for
equivariant symplectic homology and a spectral sequence
\begin{equation}
  \label{eq:spec-CH}
  E^r\Rightarrow \COH^I(W) \textrm{ with }
  E^1=\bigoplus_{\CA(x)\in I}\COH(x),
\end{equation}
where, for the sake of simplicity, we have assumed that $0\not\in I$.
As a consequence, when $W\subset \R^{2n}$ is a star-shaped domain, for
every $m\in n-1+2\N$ there must be a closed orbit $x$ with
$\COH_m(x)\neq 0$ due to \eqref{eq:CH}. Furthermore, if $q\in n+2\N$
and $\COH_q(y)\neq 0$ we must have
\begin{equation}
  \label{eq:odd}
  \sum\dim\COH_{q-1}(z)\geq 2 \quad \textrm{or} \quad
  \sum\dim\COH_{q+1}(z)\geq 2,
\end{equation}
where the summation is over all closed orbits $z$. This can also be
easily seen from \eqref{eq:CH} and the equivariant version of
\eqref{eq:long-exact}.

\subsection{Vanishing}
\label{sec:SH-gen}
In Section \ref{sec:spectral-def}, defining equivariant spectral
invariants in a sufficiently general setting, we will need the
following standard fact.

\begin{Theorem}[Vanishing]
\label{thm:vanishing}  
Let $V\to \WW$ be an exact symplectic embedding such that image of $V$
is displaceable in $\WW$. Then $\SH(V)=0$.
\end{Theorem}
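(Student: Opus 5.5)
The plan is to use the standard ``unit as an obstruction'' argument, which goes through the ring structure on symplectic homology and the Viterbo transfer. First, recall that $\SH(V)$ (the full, unfiltered homology $\SH^\infty(V)=\varinjlim_\tau \SH^\tau(V)$) is a unital ring under the pair-of-pants product. The unit is the image of the fundamental class under $\SH^\eps(V)=\H_*(V,\p V)[n]\to\SH(V)$; see \eqref{eq:SH-small-delta}. The Viterbo transfer $\SH(\WW)\to\SH(V)$ for the exact embedding $V\subset \WW$ is a unital ring map. Hence $\SH(V)=0$ if and only if the unit of $\SH(V)$ vanishes, that is, if and only if the fundamental class dies in the limit over $\tau$.

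Next, I would bring in displaceability. Pick a compactly supported Hamiltonian $K$ on $\WW$ whose time-one map displaces the image of $V$, and let $e=\|K\|$ be its Hofer norm. Represent $\SH^\tau(V)$ through Hamiltonians $H$ on $\WW$ that are admissible for $V$, i.e., negative on $V$ and of small slope outside $V$ up to some large radius, in the spirit of the Viterbo transfer construction. Then compare $H$ with the concatenation $H\# K$. Since $\varphi_K$ displaces $V$, the only $1$-periodic orbits of $H\#K$ lie where $H$ is small. So in a suitable action window the Floer homology of $H\# K$ vanishes, while the continuation maps between $H$ and $H\#K$ shift action by at most $e$.

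From this I would derive the key intermediate bound: the structure map $\SH^\tau(V)\to\SH^{\tau+C e}(V)$ vanishes for some constant $C$ (typically $C=2$), for every $\tau$. In other words, every bar in the barcode of $\SH(V)$ has length at most $Ce$. Passing to the direct limit $\tau\to\infty$ then kills everything, so $\SH(V)=\SH^\infty(V)=0$. Alternatively, apply this to the unit only and use the ring structure.

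The main obstacle is the energy--capacity step: making the continuation through $H\# K$ rigorous on a noncompact completion. This requires $C^0$-bounds, meaning a maximum principle for Floer cylinders, since $K$ is compactly supported and $J$ is admissible at infinity. It also requires checking that the composite continuation map really factors through a zero group. I would try first to follow the standard argument of Viterbo and Cieliebak--Frauenfelder--Oancea, quoting it essentially verbatim, since the statement is labeled standard.
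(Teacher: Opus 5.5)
Your proposal is correct and is essentially the paper's argument: the paper also shows that $\SH^a(V)\to\SH^{a+C}(V)$ vanishes for a constant $C$ independent of $a$, and does so as follows. It takes a step-shaped Hamiltonian $H$ (zero on $V$, rising on a displaceable collar $U$, equal to a large constant $c$ on $W\setminus U$) and composes $\varphi_H$ with a map $\varphi_F$ displacing $U$, so that every orbit of the composite in $W$ is an orbit of $c+F$ with action above $c-c_0$; the structure map then factors through this vanishing group via continuation maps. The ring-structure/unit reduction in your first paragraph is not needed, and the paper's bookkeeping gives the shift $C=c_-+c_+$, i.e., the Hofer norm of $F$, rather than $2e$.
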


Several variants of this vanishing theorem have been proved after the
original result in \cite{Vi}. To mention just a few without attempting
to be comprehensive, a theorem almost identical to Theorem
\ref{thm:vanishing} was established in \cite{CFO} under minor
additional conditions on $V$ via vanishing of Rabinowitz Floer
homology. A proof of Theorem \ref{thm:vanishing} for $V=W$ is
implicitly contained in \cite{Su}.  A direct proof of the theorem in
this case was also given in \cite{GS} in the more general setting
where $W$ is monotone. For the sake of completeness, we sketch the
proof of Theorem \ref{thm:vanishing} in Section \ref{sec:vanishing}.

The class of Liouville domains admitting exact symplectic embeddings
into, say, $\R^{2n}$ is much larger than the class of star-shaped
domains up to symplectomorphisms.

\begin{Example}
  \label{ex:exact}
  Let $L$ be a submanifold of dimension $k$ in $\R^{2n}$. Assume that
  $L$ is tangent to the kernel $\xi$ of the standard Liouville form
  $(p\, dq - q\, dp)/2$, not tangent to the Liouville vector field and
  not passing through the origin where the form vanishes. For
  instance, $L$ can be a Legendrian submanifold in $S^{2n-1}$. Then we
  can identify $TL\oplus T^*L$ with a symplectic subbundle of
  $\xi$. Assume furthermore that the symplectic orthogonal to
  $TL\oplus T^*L$ is trivial as a symplectic vector bundle. Then we
  can identify a neighborhood of $L$ with the product of a disk bundle
  in $T^*L$ and $B^{2(n-k)}$. Smoothing corners, we obtain an exact
  embedding of a Liouville domain into $\R^{2n}$, which is homotopy
  equivalent to $L$. At the same time, on a more subtle level, we do
  not know if there are exact embeddings $V\to \R^{2n}$ such that $V$
  is diffeomorphic to a ball, but the contact structure on
  $\p V\cong S^{2n-1}$ is not standard. This question might be
  intractable at the time of writing.
\end{Example}  

A simple but important consequence of the vanishing of symplectic
homology is an upper bound on the maximal length of a bar.

\begin{Theorem}[Bar length upper bound, \cite{GS}]
  \label{thm:depth}
  Assume that $\SH^\infty(W)=0$. Then all bars in the barcode of
  $\SH(W)$ are bounded by a constant $\Cbar \ge 0$ independent of the
  location of the bar. When $W$ is displaceable in $\WW$, the
  displacement energy can be taken as $\Cbar$, which is then also
  independent of the ground field $\F$.
 \end{Theorem}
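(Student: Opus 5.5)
The plan is to show that there is a constant $C\ge 0$ such that the structure map
$$
\pi_{\tau,\tau+C}\colon \SH^\tau(W)\to\SH^{\tau+C}(W)
$$
vanishes for every $\tau$. Once this holds, the barcode conclusion is formal. By the structure theorem of Section \ref{sec:persistence}, a bar $(a,b]$ with $b-a>C$ would contain a point $\tau$ with $\tau+C\in(a,b]$, and then $\pi_{\tau,\tau+C}$ would be nonzero on that summand. Infinite bars are excluded separately: such a bar survives to $\SH^\infty(W)$, which is zero by assumption.

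The source of $C$ is the unit. Let $e\in\SH^\eps(W)=\H_*(W,\p W)[n]$ be the fundamental class, which is the unit for the pair-of-pants product. Its image in $\SH^\infty(W)=\varinjlim\SH^\tau(W)$ is zero by hypothesis. Since the limit is a direct limit, $e$ already dies at some finite level: $\pi_{\eps,C}(e)=0$ in $\SH^{C}(W)$ for some $C$.

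The key input is the filtered pair-of-pants product
$$
\SH^{a}(W)\otimes\SH^{b}(W)\to\SH^{a+b}(W).
$$
It is defined at the level of semi-admissible Hamiltonians $H_1,H_2$, with output $H_3$ of slope roughly the sum of the slopes, and the action estimate comes from the energy identity. I need it to satisfy two properties.
\begin{itemize}
\item[(a)] It is compatible with the structure maps in each variable.
\item[(b)] Multiplication by $e\in\SH^\eps(W)$ equals the structure map $\pi_{\tau,\tau+\eps}$.
\end{itemize}
Granting these, for $\sigma\in\SH^\tau(W)$ we get
$$
\pi_{\tau,\tau+C}(\sigma)=\pi_{\eps,C}(e)\cdot\sigma=0,
$$
up to an arbitrarily small shift in $\eps$ that can be absorbed into the constant. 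This settles the first assertion.

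For the displaceable case I would argue directly with Hamiltonians, following \cite{GS}, instead of using the product. Fix a Hamiltonian $K$ displacing $W$ with Hofer norm close to the displacement energy $e(W)$. For a semi-admissible $H$, which is supported near $W$ after reparametrizing the linear end, the composition $H\#K$ has no $1$-periodic orbits in the relevant action window. Consider the continuation maps $H\to H\#K\to H$ given by the linear homotopies. The first shifts action by at most $\|K\|_+$ and the second by at most $\|K\|_-$, and their composite is homotopic to the structure map of $\HF(H)$ with shift $\|K\|$. This composite factors through a group that vanishes, so $\pi_{\tau,\tau+\|K\|}=0$ on $\HF^\tau(H)$, uniformly in $H$, and hence also on $\SH^\tau(W)$. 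Letting $\|K\|\to e(W)$ and using left-semicontinuity \ref{PM3} yields $\Cbar=e(W)$. This argument uses only continuation maps, which are defined over $\Z$, so $\Cbar$ is independent of $\F$.

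The main obstacle I expect is making the displacement argument rigorous for Hamiltonians that are linear at infinity rather than compactly supported. I would handle this by cutting off $H$ outside a large compact set, which is permitted because the filtered groups below a fixed action level depend only on $H$ near $W$, and then checking that the action shifts of the continuations are controlled by $\|K\|$. In the general case, the corresponding delicate point is property (b) of the product at the filtered level.
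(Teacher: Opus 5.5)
Your proposal is correct and takes the same route as the paper, which invokes \cite[Prop.~3.5, Thm.~3.8]{GS}. In the general case this is Irie's observation: the unit $[W]\in\SH^\eps(W)$ dies at some finite level $C$, and through the filtered pair-of-pants product this forces every structure map $\SH^\tau(W)\to\SH^{\tau+C}(W)$ to vanish. In the displaceable case it is the continuation argument $H\to H\#K\to H$ through a Hamiltonian with no orbits in the action window, exactly as in the paper's outline of Theorem~\ref{thm:vanishing}. One detail needs care there. $H$ must rise to a large constant $c$ within a thin neighborhood of $W$ that $\varphi_K$ displaces; the paper arranges this by rescaling with the Liouville flow. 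Making $H$ constant only outside a large compact set is not enough, because then $H\#K$ can acquire orbits in the window.
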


 This theorem is \cite[Prop.\ 3.5]{GS}, which is essentially due to
 K. Irie, combined with \cite[Thm.\ 3.8]{GS}.

\subsubsection{Proof of Theorem \ref{thm:vanishing} -- outline} 
\label{sec:vanishing}
Following \cite{GS}, we will show that there exists a constant
$C\ge 0$ such that for every interval $I\subset \R$, the natural
``quotient-inclusion'' map $\SH^I(V)\to \SH^{I+C}(V)$ is zero. This
clearly implies that $\SH(V)=0$, but is actually equivalent to
vanishing of the total homology. (This observation is due to Irie; see
\cite{GS} for a proof.) Furthermore, it is not hard to see from the
long exact sequence that it suffices to prove vanishing of the
quotient-inclusion map for the intervals $I$ of the form
$(-\infty, a)$. In other words, we need to show that all maps
$\SH^a(V)\to \SH^{a+C}(V)$ are zero for some constant $C$ independent
of $a$. An examination of the argument below shows that we can take
any constant larger than the displacement energy of $V$ in $\WW$ as
$C$.

By scaling $W$ if necessary by the Liouville flow in $\WW$, we can
assume without loss of generality that we have an exact symplectic
embedding of a neighborhood $U$ of $V$ in $\VV$ into $W$ such that the
image of $U$ is displaceable in $W$. In what follows, we identify $U$
with its image in $W$.  Since the embedding is exact, after shrinking
$U$ slightly, we can adjust the Liouville form on $W$ to make it agree
with the Liouville form on $U$, which we assume from now on.

Denote by $\rho$ the standard coordinate on the cylindrical part
$\p V\times [1,\infty)$ of $\VV$. We can assume that $U\setminus V$
has the form $\p V\times (1,\rho_0)$ for some $\rho_0>1$. Likewise, we
denote by $r$ the coordinate on the cylindrical part
$\p W\times [1,\infty)$ of $\WW$.

Next, consider a Hamiltonian $H$ on $\WW$ with the following
properties.
\begin{itemize}
  
\item On $U$:
  
  \begin{itemize}
  \item $H\equiv 0$ on $V$,
  \item $H$ is a function of $\rho$ on $U\setminus V$, convex near
    $\rho=1$, concave near $\rho=\rho_0$, and linear in between with
    slope outside the action spectrum of $\p V$.
   \end{itemize}
   
 \item On $\WW\setminus U$:

   \begin{itemize}
   \item $H\equiv c>0$ on $W\setminus U$,
   \item $H$ is a non-strictly convex function of $r$ on
     $\p W \times [1,\infty)$, strictly convex near $r=1$, linear when
     $r>r_0$ for some $r_0>1$ with slope outside the action spectrum
     of $\p W$.
   \end{itemize}

 \end{itemize}
 Note that if $H$, thought of as a function of $\rho$ on
 $U\setminus V$, is sufficiently close to a linear function, and all
 non-constant 1-periodic orbits of $H$ in $\WW$ with action below $c$
 lie in the range $1<\rho<\rho_0$. It follows that for a given action
 threshold $a$, with a suitable choice of $H$ as above and, in
 particular, $c$ large, we have $\HF^a(H)=\SH^a(V)$; cf.\
 \cite{CGGM:Entropy}.

 Let $F$ be a Hamiltonian on $\WW$ supported in $W$, whose time-1 map
 $\varphi_F$ map displaces $U$ in $W$. Consider the composition
 $\varphi_K=\varphi_F\varphi_H$, where $K$ is the concatenation of $H$
 and $F$ after a suitable time-change. All 1-periodic orbits of $K$ in
 $W$ are 1-periodic orbits of $c+F$. Hence, these orbits have action
 above $c-c_0$, where $c_0$ is some constant which depends only on $F$
 but not on $H$. As a consequence, $\HF^{s}(K)=0$ for all $s<c-c_0$.

 The Hamiltonians $H$ and $K$ have the same slope at infinity in
 $\WW$.  Therefore, we have the standard continuation maps from
 $\varphi_H$ to $\varphi_K$ and back to $\varphi_H$. With our
 conventions, these maps shift the action as follows:
$$
\HF^a(H)\to \HF^{a+c_-}(K)\to \HF^{a+c_-+c_+}(H),
$$
where
$$
c_+=\int_0^1\max_W F_t\, dt \textrm{ and } c_- = \int_0^1- \min_W
F_t\, dt .
$$
Hence, the continuation map
$$
\SH^a(V)=\HF^a(H)\to \HF^{a+C}(H)=\SH^{a+C}(V)
$$
is zero when $a+c_-<c-c_0$ and $C\geq c_-+c_+$. It is easy to see that
for a suitable choice of $H$ this map is equal to the structure map
$\SH^a(V)\to \SH^{a+C}(V)$; see, e.g., \cite{CGGM:Entropy, GS}. This
concludes the proof of the theorem.\qed

\begin{Remark}
  In the choice of the Hamiltonian $H$ we followed tradition, but a
  simpler choice would work just as well. Namely, we could have
  required $H\equiv c$ entirely on $\WW\setminus U$. Then, by the
  maximum principle, the Floer homology $\HF^a(H)$ would still be
  defined and equal $\SH^a(V)$ when $c$ is large enough as in the
  proof, and the rest of the argument would go through word-for-word
  as above; cf.\ \cite{CGGM:Inv}.
\end{Remark}  

\subsection{Equivariant spectral invariants} 
\label{sec:spectral-def}
The definition of all spectral invariants follows the same standard
recipe going back in some form to, e.g., \cite{Vi:gen}. In the
equivariant Reeb framework, it takes the following form. Fix a
Liouville domain $W$ and a class $\zeta\in \COH^+(W)$. Given an exact
symplectic embedding $V\to \WW$, let $\cf_\zeta(V)\in (0,\infty)$ be
the infimum of all $a\in \R$ such that the Viterbo transfer of $\zeta$
to $\COH^+(V)$ is contained in the image of
$\COH^{(\eps,a)}(V)\to \COH^+(V)$. Then, as can be easily checked,
$\cf_\zeta(V)$ has the expected properties of a (contact, equivariant)
spectral invariant aka a symplectic capacity aka an action
selector. For instance,
\begin{itemize}
\item{Monotonicity:}
  $\cf_\xi$ is monotone with respect to inclusion of exact
  symplectic embeddings;

\item{Homogeneity:} $\cf_\zeta(\lambda V)=\lambda \cf_\zeta(V)$, where
  $\lambda V$ is the image of $V$ under the time-$(\ln\lambda)$
  Liouville flow on $\WW$.
\end{itemize}
In this construction, $\zeta$ is just a class in the symplectic
homology group which depends on $W$.  In some instances, it is
convenient to have spectral invariants which are universal, i.e.,
naturally present for a sufficiently interesting class of Liouville
domains $W$ and/or $V$. This is the approach taken in \cite{GH} and
below we briefly review the definition.

A natural class of Liouville domains to define the equivariant
spectral invariants $\cf_k$, $k\in\N$, is that of Liouville domains
$W$ with $\COH(W)=0$.

For such a Liouville domain $W$ and $k\in\N$, set
$$
\xi_k=[W]\otimes u^{k-1}\in \H_{*}(W,\p W)[u]\cong \COH^-(W),
$$
where we use identification \eqref{eq:COH-}. This is a class of
degree $n+2(k-1)$. The connecting map
$\delta\colon \COH^+(W)\to \COH^-(W)$ in the long exact
sequence, \eqref{eq:COH-long}, is an isomorphism of degree $-1$.  Let
\begin{equation}
  \label{eq:clases}
\zeta_k:=\delta^{-1}(\xi_k)\in \COH^+_{n+1+2(k-1)}(W).
\end{equation}

Now, applying the above construction to $\zeta=\zeta_k$, we obtain a
sequence of spectral invariants $\cf_k(W)$. (Labeling these invariants
starting with $k=1$ rather than $k=0$ is awkward but common.)

More directly, by the long exact sequence we have
\begin{equation}
  \label{eq:xi-spec}
\cf_k(W)=\inf\{c>0\mid j_-^c(\xi_k)=0\},
\end{equation}
where $j_-^c\colon \COH^-(W)\to \COH^c(W)$ is the structure map. In
other words, for every $m=n+(k-1)$, $k\in \N$, there is a bar
beginning at $\xi_k$ at zero level in the equivariant symplectic
homology persistence module, and $\cf_k(W)$ is the end-point of that
bar. This is the perspective that we will mainly take in this paper.

It is easy so see that an exact embedding $V\to W$, where we also
require that $\COH(V)=0$, pushes back the classes $\xi_k$ and
$\zeta_k$ for $W$ to their counterparts for $V$. As a consequence, the
resulting spectral invariants satisfy the above monotonicity, i.e.,
$\cf_k(V)\leq \cf_k(W)$, and homogeneity conditions.

Moreover, these spectral invariants enjoy an additional monotonicity
property. Namely,
\begin{equation}
\label{eq:monotone}  
\cf_1(W)\leq \cf_2(W)\leq \cf_3(W)\leq \dotsb .
\end{equation}
The reason is that $\DD\xi_{k+1}=\xi_k$ for $k\in\N$ and hence
$\DD\zeta_{k+1}=\zeta_k$, where $\DD$ is the shift map in the Gysin
exact sequence, and the spectral values are obviously decreasing under
$\DD$. Moreover, the inequalities in \eqref{eq:monotone} are strict
when all closed Reeb orbits on $\p W$ are isolated; \cite{GG:LS}.

The condition that the total equivariant symplectic homology of $W$
vanishes is difficult to verify directly, and it can be replaced by
the requirement that $\SH(W)=0$, which is stronger than that
$\COH(W)=0$. (However, we are not aware of any examples where
$\COH(W)=0 $ but $\SH(W)\neq 0$.) Furthermore, the vanishing
requirement on $\SH(W)$ can be further replaced by the displaceability
condition as in Theorem \ref{thm:vanishing}, which is usually much
more tractable.

So far, the ground ring has played no role in the
construction. However, the spectral invariants actually depend on the
ring as Theorem \ref{thm:spectral} and Example \ref{ex:ellipsoids}
show. It is not hard to see that when the ground ring is a field, the
equivariant spectral invariants are completely determined by its
characteristic $p$, and we denote the resulting spectral invariants by
$\cfp_i$. In particular, we have the spectral invariants $\cf_i^{(0)}$
when $\F=\Q$ or more generally $\charr\F=0$. One of the points of this
paper is to initiate the study of the positive characteristic
equivariant spectral invariants $\cfp_i$ for $p>0$. We note in this
connection that by the universal coefficient formula and
\eqref{eq:xi-spec},
$$
\cfp_i(W)\leq \cf_i^{(0)}(W)
$$
and $\cfp_i(W)=\cf_i^{(0)}(W)$ as along as
$\cfp_i(W)< \min p\CS(\p W)$, i.e., when $\cfp_i(W)$ is below the
smallest action of a $p$-iterated orbit; cf.\ Example
\ref{ex:ellipsoids}.

\begin{Remark}[Other $S^1$-equivariant spectral invariants]
  Equivariant spectral invariants of the type considered here are
  defined in some other situations even when $\COH(W)\neq 0$. For
  instance, let $W$ be the unit disk cotangent bundle to $S^{n}$. The
  connecting map $\delta$ is injective, i.e.,
  $\COH(W)\cong \COH^-(W)\oplus \COH^+(W)$, and the above construction
  breaks down. Yet, there is a sequence of classes
  $\zeta_k\in \COH^+(W)$ playing roughly the same role as the classes
  defined by \eqref{eq:clases} and such that \eqref{eq:monotone} holds
  at least for some sufficiently long finite subsequences; see
  \cite{GG:LS} and references therein. (We do not know if in this case
  \eqref{eq:monotone} holds literally as stated.)
\end{Remark}

\section{Proof of Theorem \ref{thm:spectral}}
\label{sec:proof-spectral}
Throughout this section, we will assume that the ground ring is a
field $\F$ but we will continue suppressing it in the notation when
$\charr \F$ is inessential.

Clearly, the sequence of spectral invariants, \eqref{eq:spec-seq}, is
bounded for one domain if and only if it is bounded for all domains
$W$. Hence it suffices to show that it is bounded when $W$ is an
ellipsoid. For $a\leq b$, denote by
$$
j_a^b\colon \COH^a(W) \to \COH^b(W) 
$$
the equivariant structure map; see Section \ref{sec:SH-def}.

\begin{Proposition}
  % \begin{Corollary}
  \label{prop:ellipsoid}
  Assume that $\p W$ is an irrational ellipsoid and that the interval
  $I=[a,b) \subset \R_{>0}$ contains a Reeb period, i.e.,
  $I\cap \CS(\alpha)\neq 0$. Then
  \begin{equation}
    \label{eq:ellipsoid-collapse0}
        j_a^b=0
    \end{equation}
    if and only  if 
    \begin{equation}
      \label{eq:ellipsoid-collapse}
        \COH^I(W) = \SH^{I}(W)[u]
    \end{equation}
    as graded vector spaces, i.e., the Leray spectral sequence
    collapses at the $E^1=E^2$-page.  More specifically, over a field
    of characteristic $p$ these conditions are satisfied if and only
    if there is a $k$-iterated closed Reeb orbit with action in $I$
    and $k$ divisible by $p$.
%\end{Corollary}
  \end{Proposition}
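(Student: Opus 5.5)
The plan is to compute everything explicitly for the irrational ellipsoid, using that the actions of distinct closed orbits are distinct and that every orbit is strongly non-degenerate with Conley--Zehnder index $\equiv n+1 \pmod 2$. The closed orbits are the iterates $x_j^k$ of the $n$ prime orbits $x_1,\dots,x_n$, and they are all good.

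\emph{Step 1: local homology.} For $y=x_j^k$ with $\mu(y)=m$, we have $\SH(y;\F)=\F$ in degrees $m$ and $m+1$ over any field. Equivariantly, $\COH(y;\F)=\F$ in degree $m$ when $p\nmid k$, since then the shift operator $\DD$ vanishes on local homology as for $\charr\F=0$. When $p\mid k$, I expect $\COH(y;\F_p)$ to be modeled on $\H_*(B\Z_k;\F_p)$ shifted by $m$, i.e.\ $\F_p$ in every degree $\geq m$. Equivalently, in the local Leray spectral sequence $\SH(y)[u]\Rightarrow\COH(y)$ the $d^2$-differential, which is governed by $\DD$ and hence by multiplication by $k$ on the $\Z_k$-isotropy, vanishes mod $p$. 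Thus the local Leray spectral sequence collapses at $E^2$ exactly when $p\mid k$, and otherwise kills everything except degree $m$. I would derive this from the local Gysin sequence \eqref{eq:loc-Gysin} together with the standard identification of the local equivariant complex of an iterated non-degenerate orbit; cf.\ \cite{GG:LS}.

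\emph{Step 2: from local to filtered.} Write $I\cap\CS(\alpha)=\{\CA(y_1)<\dots<\CA(y_N)\}$. Apply the action spectral sequences \eqref{eq:spec-SH} and \eqref{eq:spec-CH}, together with the long exact sequences \eqref{eq:long-exact}, and compare with Example \ref{ex:CH} (the ellipsoid is star-shaped).
\begin{itemize}
\item[(a)] Over $\Q$, all $E^1$-generators of $\COH^I$ lie in degrees of the parity of $n+1$, so that spectral sequence degenerates: $\COH^I$ has one class per orbit, and the bar of $\xi_k$ ends exactly at $\CA(x_k)$.
\item[(b)] Over $\F_p$, the same parity argument applies to the contributions of orbits with $p\nmid k$. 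An orbit with $p\mid k$ contributes the infinite tail from Step 1, and these tail classes are what kill the $\xi_i$ of all high degrees at once; this is the mechanism behind Theorem \ref{thm:spectral}.
\end{itemize}

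\emph{Step 3: the equivalences.} I would show that both \eqref{eq:ellipsoid-collapse0} and \eqref{eq:ellipsoid-collapse} are equivalent to the existence of $y_i=x_j^k$ in $I$ with $p\mid k$.

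For \eqref{eq:ellipsoid-collapse}, the Leray spectral sequence for $\COH^I(W)$ is filtered compatibly with the action filtration. By Step 1 it collapses locally precisely at the $p$-divisible iterates. I would show that the global collapse is equivalent to $\dim\COH^I_m=\sum_{q\le m,\ q\equiv m\ (2)}\dim\SH^I_q$ for all $m$. A dimension count over the (finite) list $y_1,\dots,y_N$ then shows this holds iff some $y_i$ has an infinite tail. For that count I would use that $\SH^I(W)$ has total dimension $2N$ minus twice the number of short bars, with the bars determined by the same parity constraints.

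For \eqref{eq:ellipsoid-collapse0}, the space $\COH^a(W)$ consists of the $\xi_i$ not yet killed below $a$, plus tail classes of $p$-divisible orbits below $a$. Via the exact sequence $\COH^a\to\COH^b\to\COH^{[a,b)}$, all of these die by level $b$ iff $\COH^{[a,b)}$ receives an injective connecting map in all high degrees. That in turn requires infinitely many nonzero degrees in $\COH^{I}$, i.e.\ a $p$-divisible iterate in $I$. Conversely, such an iterate's tail kills every surviving $\xi_i$ and every earlier tail class by degree reasons.

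The main obstacle is Step 1 for $p\mid k$, namely pinning down the local equivariant homology over $\F_p$ (the $\Z_k$-isotropy computation), and then making the degree bookkeeping in Step 3 rigorous. In particular, one must check that the tail of a $p$-divisible orbit in $I$ kills \emph{all} classes in $\COH^a$, including tails of earlier $p$-divisible orbits, rather than just the $\xi_i$. For this I would first try comparing with the explicit $\F_p$-computation for the round ball, where the orbits are Morse--Bott, and then perturb.
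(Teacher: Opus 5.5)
\textbf{There is a genuine gap: the passage from local to filtered equivariant homology is not established.}

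Your local computation in Step 1 matches the paper's. Up to a shift, $\COH(x^k;\F_p)\cong \H_*(B\Z_k;\F_p)$, which equals $\SH(x^k;\F_p)[u]$ when $p\mid k$ and is one-dimensional otherwise.

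The trouble is in going from there to the filtered groups. Both halves of your Step 3 need $\COH^a(W;\F_p)$ and $\COH^b(W;\F_p)$ as graded spaces. This is your ``surviving $\xi_i$ plus tail classes'' description, and your claim that a $p$-divisible tail kills everything in $\COH^a$ ``by degree reasons''. Step 3 also needs a dimension count for $\COH^I$ when $I$ contains several orbits. You plan to get these from the action spectral sequence \eqref{eq:spec-CH}. But once $p$-divisible orbits are present, its $E^1$-page has classes in both parities. The differentials between different tails and the $\xi_i$ are then not determined by anything in your outline. This is exactly the obstacle you flag, and the proposal does not resolve it. Also, $j_a^b=0$ means the connecting map $\COH^I_{m+1}\to\COH^a_m$ is \emph{surjective}, not injective.

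\textbf{The missing idea: no differential needs to be computed.} For an irrational ellipsoid, $\dim\SH^t(W;\F)=1$ for all $t>0$ over every field. This gives two global facts.
\begin{itemize}
\item[(i)] $i_a^b=0$ whenever $[a,b)$ contains a period, so $\SH^I_m\cong\SH^b_m\oplus\SH^a_{m-1}$.
\item[(ii)] For every $t$, the Leray spectral sequence for $\COH^t(W;\F)$ collapses. Its $E^2=\SH^t[u]$ is at most one-dimensional in degrees $m,m+2,\dots$, while $\COH^t_*(W;\Q)\neq 0$ in those degrees. Hence $\COH^t_*(W;\F)\neq0$ there too, by the universal coefficient formula, since equivariant symplectic homology is defined over $\Z$.
\end{itemize}

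With (i) and (ii), the long exact sequence gives
\[
\dim\COH^I_m\le\dim\COH^b_m+\dim\COH^a_{m-1}=\dim(\SH^I[u])_m .
\]
The first inequality is an equality for all $m$ if and only if $j_a^b=0$. Equality of the outer terms for all $m$ is precisely \eqref{eq:ellipsoid-collapse}. So the two conditions coincide; this is Lemma \ref{lem:key}.

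The $p$-criterion then reduces to a window $[a',b')\subset I$ around a single orbit $y=x^k$.
\begin{itemize}
\item If $p\mid k$, collapse holds on the window by your Step 1. So $j_{a'}^{b'}=0$, and hence $j_a^b$, which factors through it, vanishes.
\item If no orbit in $I$ is $p$-divisible, then $\COH^I$ is finite-dimensional by \eqref{eq:spec-CH}, while $\SH^I[u]$ is not. So collapse fails.
\end{itemize}
Your ``only if'' argument via infinitely many nonzero degrees of $\COH^I$ is essentially this last step, and it is fine once (ii) is in hand.
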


  Even though the proposition is specific to irrational ellipsoids,
  the statement is non-obvious: whenever $k$ is divisible by $p$, an
  iterate $x^k$ makes an infinite contribution to the equivariant
  symplectic homology. Whether or not conditions
  \eqref{eq:ellipsoid-collapse0} and \eqref{eq:ellipsoid-collapse}
  hold for $I$ depends on $\charr \F$. The second part of the
  proposition gives an explicit criterion when these conditions are
  met. When $\charr \F=0$, these conditions are never satisfied. This
  follows from the proposition, but a simple reason is, for instance,
  that for every $b>0$ the image of the class $\xi_k$ in
  $\COH^b(W;\Q)$ is non-zero for all large $k\in\N$.

  Theorem \ref{thm:spectral} readily follows from the proposition and
  \eqref{eq:xi-spec}. Proposition \ref{prop:ellipsoid} is in turn a
  consequence of the following lemma which holds for all Liouville
  domains $W$ with $c_1(TW)=0$ and all ground fields.

\begin{Lemma}
\label{lem:key}
    Assume that for some $0<a<b$ the structure map 
    \begin{equation}
    \label{eq:lemma_1}
      i_a^b\colon  \SH^a(W) \to \SH^b(W)
    \end{equation}
     is identically zero and that
    \begin{equation}
        \label{eq:lemma_2}
        \COH^I(W) = \SH^{I}(W)[u]
    \end{equation}
    for $I=[a,b)$ as graded spaces. (In other words, the Leray
    spectral sequence collapses at the $E^1=E^2$-page). Then the
    equivariant structure map
    \begin{equation}
    \label{eq:lemma_3}
     j_a^b\colon   \COH^a(W) \to \COH^b(W)
    \end{equation}
    is identically zero, and we also have
    \begin{equation}
        \label{eq:lemma_4}
        \COH^a(W) = \SH^{a}(W)[u] \textrm{ and }
        \COH^b(W) = \SH^{b}(W)[u]
    \end{equation}
    as graded vector spaces. Conversely, if the map \eqref{eq:lemma_3}
    is zero and \eqref{eq:lemma_4} holds, then \eqref{eq:lemma_1} is
    zero and \eqref{eq:lemma_2} holds.
\end{Lemma}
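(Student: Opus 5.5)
The idea is to compare the equivariant and non-equivariant long exact sequences of the triple $(-\infty,a)\subset(-\infty,b)$ with quotient $I=[a,b)$, and to count dimensions using the Leray spectral sequence \eqref{eq:Leray}. Write
\[
\cdots\to \COH^a_m(W)\xrightarrow{j_a^b}\COH^b_m(W)\to \COH^I_m(W)\to \COH^a_{m-1}(W)\to\cdots
\]
and similarly for $\SH$ with $i_a^b$. The basic inequality is that in each degree $\dim \COH^J_m\le \dim(\SH^J[u])_m$ for every interval $J$, because $\COH^J$ is the abutment of a spectral sequence with $E^2=\SH^J[u]$. Equality holds in all degrees if and only if the spectral sequence collapses. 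In a range of degrees bounded above, all these spaces are finite-dimensional: $\SH^J_m$ is finite-dimensional, and $\SH^J$ vanishes below some degree since the relevant orbits are finitely many with bounded index, or by working with $q$-tameness and truncating. We can therefore argue with finite sums over degrees $\le N$.

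For the forward direction, $i_a^b=0$ means the $\SH$ sequence splits into short exact sequences $0\to\SH^b_m\to\SH^I_m\to\SH^a_{m-1}\to 0$. Hence $(\SH^I[u])_m=(\SH^b[u])_m+(\SH^a[u])_{m-1}$ dimensionwise. From the equivariant sequence we get $\dim\COH^I_m\le\dim\COH^b_m+\dim\COH^a_{m-1}$, with equality in all degrees if and only if $j_a^b=0$. Chaining these,
\[
\dim(\SH^I[u])_m=\dim\COH^I_m\le \dim\COH^b_m+\dim\COH^a_{m-1}\le (\SH^b[u])_m+(\SH^a[u])_{m-1}=(\SH^I[u])_m .
\]
So every inequality is an equality, which gives $j_a^b=0$ and \eqref{eq:lemma_4} degreewise. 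The key care point is the middle inequality: each term is bounded separately by the Leray bound, so equality of the sum forces equality of each term.

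The converse runs the same chain the other way. If $j_a^b=0$ and \eqref{eq:lemma_4} holds, then $\dim\COH^I_m=\dim\COH^b_m+\dim\COH^a_{m-1}=(\SH^b[u])_m+(\SH^a[u])_{m-1}$. This is at least $(\SH^I[u])_m$ by the $\SH$ exact sequence, which gives $\dim\SH^I_m\le\dim\SH^b_m+\dim\SH^a_{m-1}$ and so the same bound after tensoring with $\F[u]$. Combined with $\dim\COH^I_m\le(\SH^I[u])_m$, equality holds throughout. Equality in the $\SH$ inequality in every degree forces $i_a^b=0$, and equality in the Leray bound gives \eqref{eq:lemma_2}. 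To see that $i_a^b=0$, note that $\dim\SH^I_m=\dim\SH^b_m+\dim\SH^a_{m-1}-\operatorname{rk} i_{m}-\operatorname{rk} i_{m-1}$. Summing these equalities over degrees with weights from $u$ gives $\sum_k(\operatorname{rk} i_{m-2k}+\operatorname{rk} i_{m-1-2k})=0$, so all ranks vanish.

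The main obstacle is making the dimension counting rigorous. One must check that in each fixed degree only finitely many $u$-shifts contribute, so that the sums $(\SH^J[u])_m=\sum_{k\ge0}\dim\SH^J_{m-2k}$ are finite. This requires a lower bound on the degrees of $\SH^J$ for the relevant bounded action windows, which follows from $q$-tameness and the finiteness of orbits below action $b$ after a small perturbation. If only finite rank is available, I would first replace $a,b$ by nearby non-spectral values using \ref{PM1}. Once finiteness is in place, the argument is purely linear algebra.
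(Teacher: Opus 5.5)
Your proposal is correct and follows essentially the same route as the paper: the same two chains of inequalities, combining the long exact sequences for $\SH$ and $\COH$ with the Leray bound $\dim\COH^\diamondsuit_m\le\dim(\SH^\diamondsuit(W)[u])_m$. Your additional points are details the paper leaves implicit: deriving \eqref{eq:lemma_4} from termwise equality in the middle inequality, the rank-sum argument showing that equality after tensoring with $\F[u]$ forces $i_a^b=0$, and the check that each graded piece is finite-dimensional.
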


\begin{proof}[Proof of Proposition \ref{prop:ellipsoid}]
  To prove equivalence of the two conditions, note first that for
  ellipsoids or more generally whenever $\dim \SH^a(W)=1$ for all
  $a>0$, the map $i_a^b$ in \eqref{eq:lemma_1} vanishes if the
  interval $I=[a,b)$ contains a Reeb period; cf.\
  \cite{CGG:Reeb-HZ}. Furthermore, for an irrational ellipsoid,
  \eqref{eq:lemma_4} holds automatically for any $0<a<b$ or
  equivalently for all $a>0$, i.e., the spectral sequence collapses at
  the $E^1=E^2$-page. Indeed, for an irrational ellipsoid, $\SH^a(W)$
  is one-dimensional for every field $\F$ and supported in the degree
  $m$ equal to one plus the Conley--Zehnder index of the closed Reeb
  orbit with the largest action smaller than $a$. Thus
  $E^1=E^2=\SH^a(W)[u]$ is supported in degrees $m,\, m+2,\, \dotsc$
  and is one-dimensional in each of these degrees. Therefore, either
  the spectral sequences collapses at the $E^1=E^2$-page or
  $\COH^a_*(W)$ vanishes in at least one of these degrees. However, as
  is well known, $\COH^a_*(W;\Q)\neq 0$ for $*=m,\, m+2,\, \dotsc$;
  see, e.g., Example \ref{ex:ellipsoids}. Since the equivariant
  symplectic homology is defined over $\Z$, by the universal
  coefficient formula $\COH^a_*(W;\F)\neq 0$ for all $\F$ and
  $*=m,\, m+2,\, \dotsc$\ .  (As we have already pointed out,
  $\COH^a(W)$ and $E^r$ are modules over $\H^*(BS^1)=\F[u^{-1}]$, but
  not over $\H_*(BS^1)=\F[u]$.  These modules are entirely torsion and
  not finitely generated. Hence, we need to argue degree-by-degree
  rather than comparing the ranks of these modules.)  To summarize,
  \eqref{eq:lemma_1} and \eqref{eq:lemma_4} are satisfied
  automatically for irrational ellipsoids, and therefore
  \eqref{eq:ellipsoid-collapse0} and \eqref{eq:ellipsoid-collapse} are
  equivalent.

  Next, let us show that conditions \eqref{eq:ellipsoid-collapse0}
  and/or \eqref{eq:ellipsoid-collapse} are satisfied if and only if
  there is a $k$-iterated closed Reeb orbit with action in $[a, b)$
  and $k$ divisible by $p$. Clearly, by
  \eqref{eq:ellipsoid-collapse0}, it is enough to prove this when
  there is only one closed orbit $x$ with action in $I$. Let $k$ be
  its iteration order. Then
$$
\COH^I(W) = \SH^{S^1}(x) \textrm{ and } \SH^{I}(W) = \SH(x),
$$
where on the right in both identities we have the local symplectic
homology of $x$. Up to a shift of degrees by the Conley--Zehnder index
of $x$, in both the equivariant and non-equivariant settings, the
local symplectic homology of $x$ is canonically isomorphic to the
Morse homology of $S^1$, where in the equivariant case $S^1$ acts on
itself with stabilizer $\Z_k$; cf.\ \cite{GG:LS}. (Here, in
particular, we are implicitly using the fact that $x$ is
non-degenerate, elliptic and hence good.) Let now $\F=\F_p$. Then
$$
\COH(x;\F_p)=\H_*\big(S^1\times_{S^1}
ES^1;\F_p\big)=\H_*(ES^1/\Z_k;\F_p)=\H_*(B\Z_k;\F_p).
$$
When $k$ is divisible by $p$, we have 
$$
\H_*(B\Z_k;\F_p)=\H_*(S^1;\F_p)[u]=\SH(x;\F_p)[u].
$$
In other words, the Leray spectral sequence collapses and
\eqref{eq:ellipsoid-collapse} holds. On the other hand, when $p$ and
$k$ are relatively prime, $\H_*(B\Z_k;\F_p)$ is supported in degree
$0$ and hence \eqref{eq:ellipsoid-collapse} fails.  This completes the
proof of the proposition.
\end{proof}

It remains to prove the lemma.

\begin{proof} [Proof of Lemma \ref{lem:key}]
For any $0<a<b$ and $m \in \N$, we have
\begin{align}
  \label{eq:lemma_11}
    \dim \SH^I_m(W) &\leq  \dim \SH^{b}_m(W) + \dim \SH^{a}_{m-1}(W),
  \\
  \label{eq:lemma_33}
    \dim \COH^I_m(W) &\leq  \dim \COH^b_m(W) + \dim \COH^a_{m-1}(W).
    \end{align}
    Furthermore, in both cases we have equality if and only if the
    maps \eqref{eq:lemma_1} and \eqref{eq:lemma_3} are identically
    zero, respectively. This point is central to the argument
    below. Note also that in general
\begin{equation*}
  \dim \COH^{\diamondsuit}_m(W) \leq
  \dim (\SH^{\diamondsuit}(W)[u] \big )_m,
\end{equation*}
where $\diamondsuit=a$ or $b$ or $I$. This is a consequence of the
Leray spectral sequence.  With these observations in mind, we are in a
position to prove the lemma. Assume that the map \eqref{eq:lemma_1}
vanishes and \eqref{eq:lemma_2} holds. Then
\begin{align*}
  \dim \COH^I_m(W) &\leq  \dim \COH^b_m(W) + \dim \COH^a_{m-1}(W) \\
                   &\leq  \dim \big( \SH^{b}(W)[u] \big )_m
                     + \dim \big (\SH^{a}(W)[u] \big)_{m-1}  \\
                   &= \dim \big( \SH^{I}(W)[u] \big)_m \\
                   &=\dim \COH^I_m(W).
\end{align*}
Thus, \eqref{eq:lemma_33} turns into equality when \eqref{eq:lemma_1}
vanishes and \eqref{eq:lemma_2} holds, and hence the map
\eqref{eq:lemma_3} also vanishes.

Conversely, assume that the map \eqref{eq:lemma_3} vanishes and
\eqref{eq:lemma_4} holds. Then
\begin{align*}
  \dim \big ( \SH^{I}(W)[u] \big )_m
  &\leq \dim \big ( \SH^{b}(W)[u] \big )_m +
    \dim \big (\SH^{a}(W)[u] \big)_{m-1}  \\
  &= \dim \COH^b_m(W) + \dim \COH^a_{m-1}(W) \\
  &= \dim \COH^I_m(W) \\
  &\leq \dim \big ( \SH^{I}(W)[u] \big )_m,
    \end{align*}
    and hence \eqref{eq:lemma_11} is equality. We conclude that the
    map \eqref{eq:lemma_1} vanishes and \eqref{eq:lemma_2} holds. This
    completes the proof of the lemma.
\end{proof}

\section{Proofs of Theorems \ref{thm:negative} and \ref{thm:bounded}}
\label{sec:negative}
\subsection{Bounded symplectic homology: the proof of Theorem \ref{thm:bounded}}
\label{sec:bounded-pf}
We start with the proof of Theorem \ref{thm:bounded}, which is an easy
consequence of already known results and implicitly contained in
\cite{CGG:Reeb-HZ}, as mentioned earlier. Since the role of the field
is inessential, we suppress it in the notation.

Recall first that for $a\in\R$ the number $\zeta(a)$ of bars beginning
or ending at $a$ is
\begin{equation}
  \label{eq:zeta}
\zeta(a)=\sum_{\CA(y)=a}\dim \SH(y),
\end{equation}
where the sum is taken over all closed Reeb orbits $y$ with action
$a$. This readily follows from the long exact sequence; see
\cite[Lemma 2.15]{CGG:Reeb-HZ} for a more precise result. (Observe
that both sides are zero if $a\notin \CS(\alpha)$.)

Next, note that $\dim \SH^t(W)$ is the number of bars passing through
$t$. Assuming that the flow is a Reeb pseudo-rotation, denote the
prime closed Reeb orbits by $x_1,\dotsc, x_r$ and their actions by
$a_1\leq \dotsb \leq a_r$, where we have arranged the orbits in the
order of increasing action. By Theorem \ref{thm:depth}, every bar
passing through $t$ has length less than $C$ for some constant $C$
completely determined by $W$ and the field $\F$. Hence, by
\eqref{eq:zeta},
\begin{equation}
  \label{eq:zeta_upper_bound}
\dim \SH^t(W)\leq\frac{2rC}{a_1}\max_{x_i^k}\dim \SH\big(x_i^k\big).
\end{equation}

For every orbit $x$ such that all iterates $x^k$ are isolated,
$\dim \SH\big(x^k\big)$ is bounded from above by some constant $d(x)$
depending only on $x$. This is proved in \cite[Thm.\ 3.1]{McL}; see
also \cite{HM} for a similar statement for local contact homology and
\cite{GG:gap,GrMe1,GrMe2} for previous closely related results. This
gives an upper bound on the right-hand side of
\eqref{eq:zeta_upper_bound} and completes the proof of the
theorem. \qed

\subsection{Beginning and end maps}
\label{sec:beg-end}
Turning to the proof of Theorem \ref{thm:negative}, we need to review
some background results. As in the theorem, let $W$ be a star-shaped 
domain in $\R^{2n}$.  Throughout the section we will assume that the
Reeb flow on $\p W$ is non-degenerate and denote by $\PP$ the set of
closed Reeb orbits visible over the ground field $\F$. In other words,
due to non-degeneracy, $\PP$ comprises all closed Reeb orbits when
$\charr \F=2$ and all good orbits otherwise.  Furthermore, let us also
formally add to $\PP$ an extra element corresponding to the domain
itself denoting the resulting set by $\PP'$. We assign zero action and
index $n$ to this element and think of it as ``beginning'' of the
unique bar beginning at action zero.

To prove the theorem, we need the notion of the beginning and end maps
of the barcode $\CB$ of the symplectic homology persistence module.

\begin{Theorem}[\cite{CGG:Reeb-HZ}]
  \label{thm:beg-end}
  Assume that the Reeb flow on the boundary $\p W$ of a star-shaped
  domain $W$ is non-degenerate.  There exist maps
  $\beg\colon \CB\to \PP'$ (the beginning of $I$) and
  $\en\colon \CB\to \PP$ (the end of $I$) such that for every bar
  $I=(a,b]$ and $x=\beg(I)$ and $y=\en(I)$ we have
\begin{itemize}
\item $\CA(x)=a$ and $\CA(y)=b$,
\item $\deg(I)$ is either $\mu(x)$ or $\mu(x)+1$ and $\deg(I)+1$ is
  either $\mu(y)$ or $\mu(y)+1$.
\end{itemize}
\end{Theorem}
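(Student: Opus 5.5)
The plan is to build the two maps directly from the structure of the barcode $\CB$, using the long exact sequence \eqref{eq:long-exact} at each action value together with the local computation of $\SH(x;\F)$ for non-degenerate orbits. Since the flow is non-degenerate, $\CS(\alpha)$ is discrete. For each $x\in\PP$ we know $\SH(x;\F)$ exactly: it is one-dimensional in degrees $\mu(x)$ and $\mu(x)+1$ and zero otherwise. For the extra element of $\PP'$, \eqref{eq:SH-small-delta} gives $\SH^\eps(W)=\H_*(W,\p W)[n]=\F$ concentrated in degree $n$. This accounts for the single bar beginning at $0$, and we assign it to the domain element.

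First fix an action value $\tau\in\CS(\alpha)$ and a small interval $I=[\tau-\delta,\tau+\delta)$ that contains no other spectral values. By \eqref{eq:sum}, $\SH^I(W)=\bigoplus_{\CA(x)=\tau}\SH(x)$. I would then compare this with the barcode description of the sequence \eqref{eq:long-exact}. The bars of degree $m$ that begin at $\tau$ inject into $\SH^{I}_m(W)$, because they come from the cokernel of $\SH^{\tau-\delta}_m\to\SH^{\tau+\delta}_m$. The bars of degree $m$ that end at $\tau$ map isomorphically onto the kernel of $\SH^{\tau-\delta}_m\to\SH^{\tau+\delta}_m$, which is the image of the connecting map from $\SH^I_{m+1}(W)$. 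So, degree by degree, $\dim\SH^I_m(W)$ equals the number of degree-$m$ bars beginning at $\tau$ plus the number of degree-$(m-1)$ bars ending at $\tau$. This is the graded refinement of \eqref{eq:zeta}, as in \cite[Lemma 2.15]{CGG:Reeb-HZ}.

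Next comes the matching. Write $\SH^I_m(W)=\bigoplus_{\CA(x)=\tau}\SH_m(x)$. The summand $\SH_m(x)$ is nonzero only for $m\in\{\mu(x),\mu(x)+1\}$ and $x\in\PP$. The count above gives a bijection between two multisets. On one side are the pairs $(x,m)$ with $\SH_m(x)\ne0$ and $\CA(x)=\tau$. On the other are the bars beginning at $\tau$ in degree $m$, together with the bars ending at $\tau$ in degree $m-1$. Any bijection that respects $m$ will do. Under it, a bar $J$ that begins at $\tau$ is matched to some $x$ with $\deg J\in\{\mu(x),\mu(x)+1\}$, and we set $\beg(J)=x$. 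A bar $J$ that ends at $\tau$ is matched to some $y$ with $\deg J+1\in\{\mu(y),\mu(y)+1\}$, and we set $\en(J)=y$.

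Finally, we check well-definedness. Every bar has a beginning, at $0$ or at some $\tau\in\CS(\alpha)$, and the beginning at $0$ goes to the domain element. Every bar has a finite end, because $\SH(W)=0$ for star-shaped domains by Theorem \ref{thm:vanishing}; hence there are no infinite bars, and $\en$ is defined on all of $\CB$ with values in $\PP$. The step I expect to need the most care is the graded dimension count in the second paragraph. There one has to verify, using the normal form theorem and the left-semicontinuity \ref{PM3}, that the connecting map in \eqref{eq:long-exact} sees exactly the bars ending at $\tau$, with the degree shift by one. One must also check that bars of the form $(a,b]$ begin at $a$ in the cokernel sense. Once this is in place, the rest is bookkeeping.
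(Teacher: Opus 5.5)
Your proof is correct. Three points carry it:
\begin{itemize}
\item The graded count at each $\tau\in\CS(\alpha)$ follows from the long exact sequence and the interval decomposition. It says that the number of degree-$m$ bars beginning at $\tau$ plus the number of degree-$(m-1)$ bars ending at $\tau$ equals $\sum_{\CA(x)=\tau}\dim\SH_m(x)$.
\item Any degree-respecting matching with the pairs $(x,m)$ satisfying $\SH_m(x)\neq 0$ then gives maps with the required properties.
\item $\SH(W)=0$ ensures that every bar is finite, so $\en$ is defined on all of $\CB$.
\end{itemize}

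The paper does not argue this way. It quotes the more general \cite[Thm.\ 2.16]{CGG:Reeb-HZ} and remarks that in the non-degenerate case the statement also follows from a singular value decomposition \cite{UZ}.

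That alternative works at the chain level. After a non-degenerate perturbation of an admissible Hamiltonian, a filtered orthogonal basis adapted to the differential exhibits each bar as a pair of chains. The beginning and end of the bar are then read off from leading generators, each of the form $\cz$ or $\hz$, of degree $\mu(z)$ or $\mu(z)+1$, so the degree conditions are immediate. The price is that you must track the two generators per orbit, handle the short bars produced by bad orbits when $\charr\F\neq 2$, and pass to the limit as the perturbation vanishes.

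Your version stays entirely at the level of the persistence module and local homology, and needs no perturbation or choice of basis. It also extends essentially unchanged to isolated degenerate orbits, with the degree conditions replaced by $\SH_{\deg I}(x)\neq 0$ and $\SH_{\deg I+1}(y)\neq 0$. This is the direction of the cited general result.

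Your argument also shows transparently why the maps are not unique when several orbits share an action, and why nothing in such a count forces injectivity (cf.\ Remark~\ref{rmk:beg-end}). Injectivity is exactly what Theorem~\ref{prop:beg-end} later has to obtain, by a non-algebraic argument that uses finiteness of the set of prime orbits.
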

This theorem is proved in \cite[Thm.\ 2.16]{CGG:Reeb-HZ} in a more
general form without non-degeneracy assumption and for all Liouville
domains. As stated, it also readily follows from the existence of a
singular value decomposition; see \cite{UZ}.

\begin{Remark}
  \label{rmk:beg-end}
  The maps $\beg$ and $\en$ are unique when all closed Reeb orbits
  have distinct action values.  Without this assumption, however,
  their construction involves certain choices, and uniqueness is lost.
  Furthermore, the number of bars beginning or ending at an action
  value $a$ is twice the number of closed Reeb orbits with action
  $a$. This fact readily follows from \eqref{eq:zeta} and
  non-degeneracy.  As a consequence, both of these maps are surjective
  at $a$ if and only they are both injective at $a$. Thus, every
  $x\in\PP$ with action $a$ is the beginning and the end of exactly
  one bar when $\beg$ and $\en$ are both injective at $a$. We see no
  reason why the maps would be surjective or injective in general.
\end{Remark}

For an interval $I\subset \R$, denote by $\CB_I^-$ and $\CB_I^+$ the
sets of bars beginning and, respectively, ending in $I$.  Theorem
\ref{thm:negative} is an easy consequence of Part \ref{i2} of the
following technical result proved in Sections \ref{sec:pf-beg-end}
and \ref{sec:pf-pert}.

\begin{Theorem}
%\begin{Proposition}
\label{prop:beg-end}
Assume that the Reeb flow on the boundary $M^{2n-1} \subset \R^{2n}$
of a star-shaped domain $W$ is non-degenerate and has finitely many
prime closed orbits. Then for any $C>0$, there exists an interval
$I=[a,b]$ of length greater than $C$, with endpoints outside the
action spectrum, and a choice of beginning/end maps
$\Beg\colon \CB \to \Pp'$ and $\End\colon \CB \to \Pp$ such that
    \begin{itemize}
    \item[\reflb{i1}{\rm{(i)}}]
      $\dim \SH^t(W; \F_2) = \max_{s>0} \dim \SH^s(W; \F_2)$ for all
      $t \in I$ and, in particular, $\dim \SH^t(W; \F_2) = \const$ for
      all $t \in I$;
    \item[\reflb{i2}{\rm{(ii)}}] the maps $\Beg$ and $\End$ are
      injective on $\CB_I^-$ and, respectively, on $\CB_I^+$.
    \end{itemize} 
%\end{Proposition}
  \end{Theorem}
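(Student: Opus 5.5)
The plan has three parts: pick the interval $I$ using a pigeonhole argument on the bounded function $\dim\SH^t(W;\F_2)$, then show that on such an interval every bar is long relative to the spacing of visible orbits, and finally construct injective $\Beg$ and $\End$ by perturbing $\alpha$ to separate action values.

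For the choice of $I$, work over $\F_2$, where every non-degenerate orbit is visible, so $\PP$ consists of all closed orbits. By Theorem \ref{thm:bounded}, $D:=\max_{s>0}\dim\SH^s(W;\F_2)$ is finite. With finitely many prime orbits $x_1,\dots,x_r$, the action spectrum is contained in the finite union of arithmetic progressions $\{k\CA(x_i)\}$. I would first show that the set $\{t\colon \dim\SH^t(W;\F_2)=D\}$ contains intervals of arbitrarily large length. The mechanism is a recurrence argument. The barcode is determined by the local homologies $\SH(x_i^k;\F_2)$. These are eventually periodic in $k$ up to index shifts, since the iterates' local homologies depend only on $k$ modulo the orders of the roots of unity among the Floer multipliers, with a shift by $k\hmu$; this uses non-degeneracy and \cite{McL}. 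The action values $k\CA(x_i)$ recur modulo the lattice generated by the actions, via simultaneous Diophantine approximation (Kronecker/Dirichlet).

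So choose $t_0$ with $\dim\SH^{t_0}=D$ and shift by a large simultaneous near-period $T$. This gives a window around $t_0+T$ whose action-spectrum pattern almost repeats the pattern near $t_0$. Combining the bar-length bound of Theorem \ref{thm:depth} (every bar has length $<\Cbar$) with counting via \eqref{eq:zeta}, I would argue that the number of bars through $t$ cannot drop in such windows, because a drop would force a strict defect in the alternating count of beginnings and ends that is invariant under the recurrence. An alternative I would also try is to compare with the Euler-characteristic type quantity $\sum(-1)^{\deg}$ over bars, which is constant for large $t$. Either way, the goal is an interval $I=[a,b]$ of length $>C$ on which $\dim\SH^t=D$, giving Part \ref{i1}.

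For Part \ref{i2}, on $I$ the dimension is constant and maximal. At each spectral value $c\in I$, the number of bars ending at $c$ therefore equals the number beginning at $c$, and both equal $\frac12\zeta(c)=\#\{y:\CA(y)=c\}$ by Remark \ref{rmk:beg-end}. So $\Beg$ and $\End$ are maps between sets of equal size at each level, and injectivity is equivalent to surjectivity.

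To construct injective choices, I would perturb $\alpha$ to $f\alpha$ with $f$ $C^\infty$-close to $1$, keeping non-degeneracy and separating all action values in $I$. For a perturbation within distance $\delta$, the barcode over $\F_2$ moves by at most $O(\delta)$ in the bottleneck distance. For the perturbed flow, all actions in $I$ are distinct, so the beginning and end maps are unique and, by the equal-count observation applied level by level, bijective at each level. This step is only valid if maximality of $\dim$ persists after perturbation on a slightly shrunk interval, which is why I would choose $I$ maximal and then shrink it by $O(\delta)$. Pulling back through the matching of orbits of $f\alpha$ to those of $\alpha$ (a bijection near each orbit, by non-degeneracy) and through the bottleneck matching of bars gives injective $\Beg$ and $\End$ for $\alpha$. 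These still satisfy the degree conditions of Theorem \ref{thm:beg-end}, since degrees are preserved under the matching.

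The main obstacle is Part \ref{i1}: proving that $\dim\SH^t$ attains its maximum on arbitrarily long intervals. The recurrence argument needs the eventual periodicity of local homology of iterates and careful quasi-periodicity of the action spectrum. This is where I expect the real work lies.
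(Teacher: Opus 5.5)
There is a genuine gap in both parts. The main missing ingredient is the Smith inequality, and for Part \ref{i2} also the Perturbation Lemma.

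\textbf{Part \ref{i1}.} Your recurrence mechanism cannot work, for two reasons.

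First, action values and local homologies do not determine the persistence module. They fix the number $\zeta(c)$ of bar endpoints at each $c$, cf.\ \eqref{eq:zeta}. Whether a given endpoint is a beginning or an end, however, is decided by the global Floer differential. So an almost-repeated action pattern near $t_0+T$ says nothing about $\dim\SH^t(W;\F_2)$ there. The Euler characteristic does not help either: each non-degenerate orbit contributes $\F_2$ in two adjacent degrees, so $\chi(\SH^t)$ is the same for all $t>0$.

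Second, even a perfect recurrence is a translation. It carries an interval of length $\ell$ to one of length $\ell$, so it cannot produce an interval of length $>C$ on which $\dim=D$.

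The paper uses a dilation instead. By the Smith inequality \cite{Se,ShZ} for $\varphi_{2H}=\varphi_H^2$ over $\F_2$, in effect $\dim\SH^{2t}(W;\F_2)\geq\dim\SH^{t}(W;\F_2)$. Take a gap $[a',b']$ of the spectrum on which $\dim=D$. Maximality of $D$ (Theorem \ref{thm:bounded}) then forces $\dim=D$ on $[2^ka',2^kb']$, whose length $2^k(b'-a')$ is as large as you like. This is where characteristic $2$ genuinely matters; you use $\F_2$ only for visibility.

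\textbf{Part \ref{i2}.} You correctly see that you need $\dim\equiv D$ on $I$ for the perturbed flow. Bottleneck stability cannot give this, and shrinking $I$ by $O(\delta)$ does not help, because the problem lives in the interior of $I$.

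Suppose $x\neq y$ have the same action $c\in I$ and you split them. Both endpoints at $x$ may become ends and both at $y$ beginnings, giving a dip to $D-2$ between the split actions. Or the reverse may happen, giving a bump to $D+2$. Both are consistent with an $O(\delta)$ bottleneck distance and with the endpoint counts. In either case the unique beginning/end maps of the perturbed flow are not injective, so neither is their pullback.

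The paper rules out both scenarios with two separate ingredients.
\begin{itemize}
\item It excludes bumps with the Perturbation Lemma \ref{lem:perturbation}. This is a chain-level argument (via the continuation isomorphism $\psi$) showing that for one of the two signs in $H\pm sf$ the filtered dimension does not increase. Applied orbit by orbit (Corollary \ref{cor:perturbation}), it yields an autonomous $K$ near $2^kH$ with distinct actions and $\dim\HF^t(K;\F_2)\leq D$ on $I$.
\item It excludes dips by applying the Smith inequality to $\varphi_K=\varphi_{K/2^k}^{2^k}$, which gives $\dim\HF^t(K;\F_2)\geq D$ on $I$.
\end{itemize}
Only with this equality does Remark \ref{rmk:beg-end2} give bijective beginning/end maps for $K$. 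The maps for $2^kH$ are then taken as a limit along a constant subsequence of such perturbations $K_i\to 2^kH$.
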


  \begin{Remark}
\label{rmk:beg-end2}
Observe that \ref{i2} follows from \ref{i1} when all orbits have
distinct actions and thus the maps $\Beg$ and $\End$ are unique and
clearly one-to-one; cf., Remark \ref{rmk:beg-end}. (It suffices to
assume only that $\dim\SH^t(W;\F_2)=\const$ for $t\in I$.)  In
general, when there are two orbits with the same action, depending on
their indices, there could be an ambiguity in the choice of
beginning/end maps. Then \ref{i2} ensures that, in the setting of
Theorem \ref{prop:beg-end}, one can make this choice so that the
behavior is similar to the case of distinct actions. We note that this
is not a purely algebraic statement, and the proof heavily relies on
the assumption that there are finitely many prime orbits.
\end{Remark}

\begin{proof}[Proof of Theorem \ref{thm:negative}]
  Let $\Beg \colon \Bb \to \Pp'$ and $\End \colon \Bb \to \Pp$ be as
  in Theorem \ref{prop:beg-end}. Set
$$
\Pp_I:=\{ z \in \PP \, \vert \, \CA(z) \in I\}. 
$$
Since $\F=\F_2$, all orbits in $\PP_I$ are $\F$-visible. By Remark
\ref{rmk:beg-end} and \ref{i2},
every orbit in $\PP_I$ is the beginning of exactly one
bar and the end of exactly one bar:
$$
\Pp_I= \Beg (\CB_I^-) = \End (\CB_I^+). 
$$
Let us partition $\PP_I$ into classes $\PP_I^i$ of closed orbits where
each class comprises closed orbits connected by a sequence of
bars. More formally, the construction is as follows.

Let $x \in \PP_I$ be one of the lowest-action orbits. We put
$x \in \PP_I^1$. If the bar $\Beg^{-1}(x)$ is not contained in the
interval $I$, then $x$ is the only orbit in $\Pp_I^1$. If it is, then
we put the orbit $y:=\End(\Beg^{-1}(x)) \in \Pp_I^1$ as well. Next, we
ask whether or not $\Beg^{-1}(y) \subset I$.  If not, then $x$ and $y$
are the only orbits in $\Pp_I^1$. If yes,
$\End(\Beg^{-1}(y)) \in \Pp_I^1$, too. We continue until this process
terminates, i.e., until the end of a bar lies outside $I$. We then
define $\Pp_I^2$ similarly, starting from one of the lowest-action
orbits in $\PP_I\setminus\Pp_I^1$, and so on. (By \ref{i1}, this
recipe yields a partition of $\PP_I$ into exactly
$\max_{s>0} \dim \SH^s(W; \F_2)$ sets $\Pp_I^i$, although we do not
need this fact.) We note the following important property of the
partition: For any $x, y \in \Pp_I^i$, $\mu(x) \geq \mu(y)$ whenever
$\CA(x)>\CA(y)$.

Now, arguing by contradiction, assume that there exists a closed orbit
$z$ with $\mu(z)<0$. Fix an integer $N \in \N$ large enough so that
$\mu(z^k) < \mu(z^l)$ when $k-l > N$. By the pigeon hole principle, if
$C>0$ is sufficiently large, there always exist a class $\Pp_I^i$
containing two iterates $z^k$ and $z^l$ of $z$, which are
$N$-apart. This is a contradiction, since $\CA(z^k)>\CA(z^l)$ but
$\mu(z^k) < \mu(z^l)$.
\end{proof}

\subsection{Proof of Theorem \ref{prop:beg-end}}
\label{sec:pf-beg-end}
The key to the proof is the following result, asserting roughly
speaking that by a small perturbation of a Hamiltonian we can make
actions distinct without increasing the dimension of the filtered
Floer homology. In Lemma \ref{lem:perturbation} and Corollary
\ref{cor:perturbation}, the ground field $\F$ is arbitrary and
suppressed in the notation.

\begin{Lemma}[Perturbation Lemma]
\label{lem:perturbation}
Let $H \colon \widehat{W} \to \R$ be an admissible Hamiltonian and $x$
be a non-degenerate (as a Reeb orbit) 1-periodic orbit of $H$ whose
Hamiltonian action $\A_H(x)$ is isolated in the action spectrum
$\Ss(H)$ of $H$. Let $U \subset \WW$ be an isolating neighborhood of
$x$ and $f \colon \widehat{W} \to \R$ be a non-negative function
supported in $U$, which is constant and strictly positive near
$x$. Set
$$
K^{\pm} = H\pm sf,
$$
where $s\geq 0$. Then, for all sufficiently small $s\geq 0$, we have
$$
\dim \HF^t(K^{\pm}) \leq \dim \HF^t(H)
$$
for all $t\in \R$ and at least one of the Hamiltonians $K^{\pm}$.
\end{Lemma}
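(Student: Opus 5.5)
Write $c=\CA_H(x)$. The plan is to compare the barcodes of $H$ and $K^\pm$ directly, using the fact that the perturbation only moves the action of $x$ and leaves everything else untouched.

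Since $f$ is supported in the isolating neighborhood $U$ and is constant, equal to some $f_0>0$, near $x$, the following hold for small $s$.
\begin{itemize}
\item The 1-periodic orbits of $K^\pm$ are exactly those of $H$. For orbits inside $U$, only $x$ is present, since $U$ is isolating and the perturbation is small.
\item The Hamiltonian vector field is unchanged near $x$, so $x$ is still a (Morse--Bott) non-degenerate orbit of $K^\pm$, with the same local Floer homology $\HF(x)$.
\item Its action becomes $\CA_{K^\pm}(x)=c\mp sf_0$.
\item All other actions are unchanged. As $c$ is isolated in $\CS(H)$, for $s$ small the interval $[c-sf_0,c+sf_0]$ contains no other point of $\CS(H)$ except the actions of other orbits $y\neq x$ with $\CA_H(y)=c$. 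These stay at $c$.
\end{itemize}

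Next I would compare persistence modules. Fix $\delta>0$ with $[c-\delta,c+\delta]\cap\CS(H)=\{c\}$. By continuity of filtered Floer homology, $\HF^t(K^\pm)=\HF^t(H)$ for $t\notin(c-\delta,c+\delta)$. This follows from the continuation maps between $H$ and $K^\pm$, which have action shift at most $s\max f$, together with local constancy (PM1) away from the spectrum; it is essentially a barcode stability argument, since the bottleneck distance is at most $s\max f$. So only $t$ near $c$ matters.

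Work at the chain level. Choose a small non-degenerate perturbation and split the complex as
\[
\CF(H)=C_{<c}\oplus C_x\oplus C_Y\oplus C_{>c},
\]
where $C_x$ is generated by the perturbed generators $\hat x,\check x$ of $x$, and $C_Y$ by those of the other orbits at level $c$. For $K^\pm$ the same generators appear, with $C_x$ shifted to action $c\mp sf_0$. The Floer differential is unchanged up to continuation, since the homotopy $H\pm\sigma sf$ is supported in $U$ and introduces no new orbits. For $t$ in $(c-sf_0,c]$ or $(c,c+sf_0]$, $\HF^t(K^\pm)$ is the homology of a subcomplex that differs from the one computing $\HF^t(H)$ only by including or excluding $C_x$. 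In the barcode of $H$, the bars with an endpoint at $c$ are split between $x$ and $Y$ via the beginning/end maps (Theorem \ref{thm:beg-end} and the singular value decomposition of \cite{UZ}). Moving $x$ down to $c-sf_0$ extends every bar that begins at $x$ and shortens every bar that ends at $x$; moving it up does the opposite. Bars that begin and end at $x$ have length comparable to at least the gap, so they are not affected.

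The dimension count follows. For $t\in(c-sf_0,c]$ under $K^+$ (where $x$ moves down), $\dim\HF^t(K^+)-\dim\HF^t(H)=\beta_x-\varepsilon_x$. Here $\beta_x$ is the number of bars beginning at $x$ and $\varepsilon_x$ the number of bars ending at $x$, counted over the $x$-part of a singular value decomposition chosen to separate $C_x$ from $C_Y$. For $t\in(c,c+sf_0]$ under $K^-$ the difference is $\varepsilon_x-\beta_x$. Outside these windows the dimensions agree. Hence whichever sign makes the difference nonpositive gives the desired Hamiltonian: take $K^+$ if $\beta_x\le\varepsilon_x$ and $K^-$ otherwise.

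The main obstacle is to make precise that the bars at the level $c$ can be attributed to $x$ versus $Y$ compatibly with the perturbation. This requires the Floer differential between $C_x$ and $C_Y$ to vanish: both sit at the same action, so energy zero forces any connecting Floer cylinder to be constant, while $x$ and $Y$ are disjoint. I would handle this by viewing $K^\pm$ for varying $s$ as a one-parameter family with fixed generators and using a filtered basis adapted to the splitting $C_x\oplus C_Y$. Then the effect of moving one direct summand's filtration is computed via the Usher--Zhang normal form.
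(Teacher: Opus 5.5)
Your reduction to the two windows between $c$ and $c\mp sf_0$ is fine, but the count inside them has a genuine gap. You claim the change in $\dim\HF^t$ is $\beta_x-\varepsilon_x$ when $x$ moves down and $\varepsilon_x-\beta_x$ when it moves up, with the \emph{same} $\beta_x,\varepsilon_x$. That requires a singular value decomposition in which every basis vector lies in $C_x$ or in $C_Y$, and such a basis need not exist. The vanishing of the differential between $C_x$ and $C_Y$ is true, by your small-energy argument, but it does not stop $x$ and $Y$ from interacting through $C_{<c}$ or $C_{>c}$.

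Here is a concrete example over $\F_2$. Take one generator $w$ below $c$, $C_x=\langle\check u,\hat u\rangle$ and $C_Y=\langle\check v,\hat v\rangle$, with $d\check u=d\check v=w$ and all other differentials zero.
\begin{itemize}
\item Moving $x$ down, the window compares $H(C_{<c}\oplus C_x)$ with $H(C_{<c})$. Both are one-dimensional, so the change is $0$.
\item Moving $x$ up, it compares $H(C_{<c}\oplus C_Y)$, which is one-dimensional, with $H(C_{<c}\oplus C_x\oplus C_Y)$, which is spanned by $\hat u,\hat v,\check u+\check v$. So the change is $-2$.
\end{itemize}
These are not negatives of each other. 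The bar killed at level $c$ and the bar born at $\check u+\check v$ are attributed to $x$ or to $Y$ differently depending on the direction of the shift, so no single pair $\beta_x,\varepsilon_x$ works.

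What holds in general, and is all you need, is an inequality. In your model (a common differential, with only the filtration of $C_x$ shifted), put $A=C_{<c}$, $A_x=A\oplus C_x$, $A_Y=A\oplus C_Y$ and $A_{xY}=A\oplus C_x\oplus C_Y$. The Mayer--Vietoris sequence of $0\to A\to A_x\oplus A_Y\to A_{xY}\to 0$ shows that the two window changes add up to $-2\rk(\partial)\le 0$, where $\partial\colon H(A_{xY})\to H(A)$ is the connecting map. Hence at least one change is $\le 0$, and your formula is the special case $\partial=0$.

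The paper reaches the same inequality by counting cycles. Let $\Delta^{\mathrm{down}},\Delta^{\mathrm{up}}$ be the jumps in $\dim\ker d$ across the cluster of $x$. The two changes are $2\Delta^{\mathrm{down}}-\dim\HF(x)$ and $-(2\Delta^{\mathrm{up}}-\dim\HF(x))$, and the paper proves $\Delta^{\mathrm{up}}\ge\Delta^{\mathrm{down}}$. (In your example these are $1$ and $2$.) That inequality rests on Claim (v): when $\HF(x)\neq 0$, the continuation map $\psi\colon\CF(\widetilde{K}^+)\to\CF(\widetilde{K}^-)$ is a chain-level isomorphism with $\psi(y_i)=y_i+(\text{lower-action terms})$.

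This also replaces your unsupported sentence that the differential is ``unchanged up to continuation.'' Some such statement is needed before the three filtered complexes can be compared at all. The paper supplies it through the triangular chain isomorphism $\psi$, rather than by asserting that the differentials coincide.
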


The lemma is proved in Section \ref{sec:pf-pert}.  As an immediate
consequence, we obtain the following global perturbation result.

\begin{Corollary}
\label{cor:perturbation}
Let $H \colon \widehat{W} \to \R$ be an admissible Hamiltonian such
that all non-constant 1-periodic orbits are non-degenerate (as Reeb
orbits).  Fix a neighborhood $U \subset \widehat{W}$ of the set
$\Pp(H)$ of such orbits. There exists an arbitrarily small autonomous
perturbation $K$ of $H$ supported in $U$ and such that
$\Pp(K)= \Pp(H)$, the action $\A_K \colon \Pp(K) \to \R$ is injective,
and
$$
\dim \HF^t(K) \leq \dim \HF^t(H)
$$
 for all $t \in \R$.
\end{Corollary}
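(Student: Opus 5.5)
We will deduce Corollary \ref{cor:perturbation} from Lemma \ref{lem:perturbation} by applying the lemma inductively, one orbit at a time. First, since $H$ is admissible and its non-constant 1-periodic orbits are non-degenerate as Reeb orbits, $\Pp(H)$ is a finite collection $\tx_1,\dotsc,\tx_N$ of isolated circles of fixed points. The action spectrum $\Ss(H)$ is therefore finite: it consists of the values $\A_H(\tx_i)$ together with the value of $H$ on the constant orbits in $W$. We then choose pairwise disjoint isolating neighborhoods $U_i\subset U$ of the $\tx_i$, small enough that the closures of the $U_i$ do not meet $W$. On each $U_i$ we fix a non-negative bump function $f_i$ supported in $U_i$ which is constant and positive near $\tx_i$. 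For convenience we take $f_i$ to be a function of $r$ only; then the perturbed Hamiltonian stays autonomous and of the form $h(r)$ near the orbit.

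The induction runs as follows. Set $H_0=H$. Given $H_{j-1}$, we apply Lemma \ref{lem:perturbation} to $H_{j-1}$, the orbit $\tx_j$, the neighborhood $U_j$ and the function $f_j$. This produces $H_j=H_{j-1}\pm s_jf_j$, with a sign chosen so that
\[
\dim\HF^t(H_j)\leq\dim\HF^t(H_{j-1})
\]
for all $t$, and with $s_j>0$ arbitrarily small. Because $f_j$ is constant near $\tx_j$, the orbit $\tx_j$ survives unchanged as a 1-periodic orbit, and its action shifts by exactly $\mp s_j f_j(\tx_j)$. The other orbits are untouched, since the supports are disjoint.

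Next we must check that no new 1-periodic orbits are created, so that $\Pp(H_j)=\Pp(H)$. In the region where $f_j$ is not locally constant, $H_{j-1}$ has no 1-periodic orbits. For $s_j$ small, the Hamiltonian vector field changes only slightly there, so no 1-periodic orbits appear by compactness and isolation. This is the one point where we use that $U_j$ is an isolating neighborhood and that $\tx_j$ is non-degenerate. Non-degeneracy ensures that no nearby orbits bifurcate and that $\tx_j$ stays Morse--Bott, so it remains isolated with an isolated action. Hence the hypotheses of the lemma persist at each stage.

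Finally, we choose the $s_j$ successively so small that each new action value $\A_{H_j}(\tx_j)$ avoids the finitely many action values already present. Since $\Ss(H)$ is finite, only finitely many values must be avoided, and any small nonzero shift outside a finite set works. The slope at infinity is unchanged, so $\HF^t$ is defined throughout. Setting $K=H_N$, chaining the inequalities gives $\dim\HF^t(K)\le\dim\HF^t(H)$, and $\A_K$ is injective on $\Pp(K)=\Pp(H)$.

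The main point requiring care is that the sign in Lemma \ref{lem:perturbation} is not ours to choose, and the admissible range of $s_j$ depends on $H_{j-1}$. We handle this by fixing the sign first and then shrinking $s_j$ within the interval allowed by the lemma to avoid the finite set of forbidden action values. The perturbation stays arbitrarily small because every $s_j$ can be taken as small as desired.
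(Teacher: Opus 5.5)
Your strategy is the paper's. The paper calls the corollary an immediate consequence of Lemma \ref{lem:perturbation}, applied to the finitely many orbits of $\Pp(H)$ one at a time, using bumps in pairwise disjoint isolating neighborhoods and choosing each $s_j$ small enough that the shifted action misses the other current values. Most of your bookkeeping is fine: no new orbits appear for small $s_j$, the action shifts by exactly $\mp s_jf_j(\tx_j)$, and the inequalities chain.

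One step is wrong as written: you cannot take $f_i$ to be a function of $r$ only. A nonzero radial function is supported on whole shells $M\times J$, so it is not supported in a small neighborhood $U_i$ of the single circle $\tx_i(S^1)$. More importantly, $A_h'(r)=rh''(r)>0$ on $(1,\rmax)$. So two orbits of $H$ with the same action lie on the same level $r=r_*$, since they come from Reeb orbits of the same period. A radial perturbation therefore shifts all of them by the same amount and can never make $\A_K$ injective, yet separating exactly these orbits is the content of the corollary. You need a non-radial, still autonomous, bump equal to a positive constant near $\tx_i(S^1)$ and vanishing near all other orbits. What you use afterwards is only that $H_j$ differs from $H_{j-1}$ by the constant $\pm s_jf_j(\tx_j)$ near $\tx_j$, and that remains true.

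There are two smaller points. First, from the second step on, $H_{j-1}$ is no longer admissible in the paper's sense, so you are applying the lemma outside its literal hypotheses. You should say why this is harmless: the lemma's proof only uses that the Hamiltonian is autonomous and linear at infinity with the fixed slope. It also uses that near each orbit the Hamiltonian agrees with $H$ up to an additive constant, so the orbits stay Morse--Bott circles of non-degenerate Reeb orbits with isolated actions. All of this holds for $H_{j-1}$.

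Second, the lemma does not give you ``fix the sign first, then shrink $s_j$'', since the good sign could a priori depend on $s$. You do not need it. Every other current action value differs from $\A_{H_{j-1}}(\tx_j)$ either by $0$, which any nonzero shift avoids, or by a positive amount. So for all sufficiently small $s_j>0$, both values $\A_{H_{j-1}}(\tx_j)\mp s_jf_j(\tx_j)$ avoid the other action values, and whichever sign the lemma supplies will do.
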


\begin{proof}[Proof of Theorem \ref{prop:beg-end}]
  By Theorem \ref{thm:bounded}, the filtered symplectic homology
  $\SH^t(W;\F_2)$ has a maximal dimension $D$. Pick an interval
  $[a',b']$ outside the action spectrum such that
  $\dim \SH^t(W; \F_2) = D$ for all $t \in [a', b']$. Then we also
  have $\dim \SH^{t}(W; \F_2) = D$ for all
  $t \in [a,b]:=[2^ka', 2^kb']$ and all $k\in\N$ by the Smith
  inequality; see \cite{Se} and also \cite{ShZ}. In what follows, we
  will fix $k$ so large that the interval $I=[a,b]$ has length greater
  than the constant $C$ from the theorem: $2^k(b'-a')>C$. This
  completes the proof of \ref{i1}.

  Let now $H$ be an admissible Hamiltonian with slope
  $\lambda$. Recall that the Floer homology persistence module
  $\HF^t(H;\F_2)$ is a reparametrization of the persistence module
  $\SH^t(W; \F_2)$ for $t\leq \lambda$; see \cite[Thm.\
  3.5]{{CGGM:Entropy}}. Moreover, the reparametrization function for
  the Hamiltonian $sH$ converges to the identity as $s\to\infty$
  uniformly on compact sets in $[0,\infty)$; see \cite[Sec.\
  2.2]{{CGGM:Entropy}} and also \cite{CGGM:Inv}. As a consequence, for
  a suitable choice of $H$, we have
  $$ \dim \HF^t(H; \F_2) = D\textrm{ for all } t \in [a', b'],
  $$
  where $b' \leq \slope(H)$, and
  $$
  \dim \HF^{t}(2^kH; \F_2) = D \textrm{ for all }
  t \in [a,b]:=[2^ka', 2^kb'] \textrm { and all }k\in\N.
  $$
  Note that the persistence module $\HF^t(2^kH;\F_2)$ is a
  reparametrization of the symplectic homology persistence module as
  long as $t \leq 2^k\slope(H)$, and hence for $t \leq b$.
  Furthermore, we can ensure that the 1-periodic orbits of $2^kH$ with
  action in $I=[a,b]$ are in one-to-one correspondence with closed
  Reeb orbits with action in $I$. Therefore, to prove \ref{i2} it
  suffices to construct beginning and end maps injective on $I$ for
  the Floer homology persistence module $\HF^{t}(2^kH; \F_2)$.
  
  To this end, it suffices to find an arbitrarily small autonomous
  perturbation $K$ of $2^k H$ such that there is a one-to-one
  correspondence between closed orbits of $H$ and $K$, all closed
  orbits of $K$ have distinct actions, and
$$
\dim \HF^t(K; \F_2) =D
$$
for all $t \in I$.  Then both \ref{i1} and \ref{i2} hold for $K$ and
$I$; see Remark \ref{rmk:beg-end2}. Finally, we can define the
beginning/end maps for $H$ as the limit of the ones for such
perturbations $K_i \to H$. (Since there are finitely many 1-periodic
orbits with action in $I$, one can always find a constant
subsequence.)

As the final step, let us show that the required perturbations do
exist. Let $K$ be a perturbation of $2^k H$ as in Corollary
\ref{cor:perturbation}. If necessary, we choose a smaller perturbation
so that we have
$$
\dim \HF^t(1/2^k K ; \F_2)=D
$$
for all $t \in [a',b']$. Then the Smith inequality implies that
\begin{equation}
\label{eq:prop_beg/end}
\dim \HF^t(K ; \F_2) \geq D
\end{equation}
for all $t\in I$. Finally, we use Corollary \ref{cor:perturbation} to
conclude that \eqref{eq:prop_beg/end} is actually an equality. This
finishes the proof of Theorem \ref{prop:beg-end}.
\end{proof}
  
\subsection{Proof of Lemma \ref{lem:perturbation}}
\label{sec:pf-pert}
Throughout the proof we suppress the field $\F$ in the notation.  The
statement clearly holds for $s=0$. First, note that for a sufficiently
small $s>0$ all three Hamiltonians $H$ and $K^{\pm}=H\pm sf$ have the
same 1-periodic orbits as subsets of $\widehat{W}$. Furthermore, fix a
small $\eps>0$ and also pick a sufficiently small $s$ to guarantee
that $\eps> 2s \max f >0$ and the orbit $x$ is the unique 1-periodic
orbit of $K^{\pm}$ with
\begin{align*}
     \A_{K^{+}}(x) &\in (\A_{H}(x), \A_{K^{+}}(x) + \eps), \\
      \A_{K^{-}}(x) &\in (\A_{K^{-}}(x) - \delta, \A_{H}(x))
\end{align*}
for some $\delta>0$. Here the uniqueness is a consequence of
the assumption that the action value $\A_H(x)$ is isolated in
$\Ss(H)$. Next, we choose a specific non-degenerate perturbation
$\widetilde{H}$ of $H$.

\begin{Claim}
  There exists a non-degenerate (time-dependent) perturbation
  $\widetilde{H}$ of $H$ which satisfies the following conditions:
    \begin{itemize}
    \item[\reflb{ii1}{\rm{(i)}}] All three Hamiltonians
      $\widetilde{H}$, $\widetilde{K}^{\pm}:=\widetilde{H} \pm sf$
      have the same 1-periodic orbits.

    \item[\reflb{ii2}{\rm{(ii)}}] The orbit $x$ splits into exactly
      two non-degenerate orbits $x_1$ and $x_2$, and these orbits are
      contained in the region where $f$ is constant.

    \item[\reflb{ii3}{\rm{(iii)}}] The remaining 1-periodic orbits
      $y_i$ lie outside the support of $f$.

    \item[\reflb{ii4}{\rm{(iv)}}] For some $\delta'>0$ and
      $\eps'> 2s \max f$, the orbits $x_i$ are the only 1-periodic
      orbits of $\widetilde{K}^{\pm}$ with action
      \begin{align*}
        \A_{\widetilde{K}^{+}}(x_i) \in
        (\min \A_{\widetilde{K}^{+}}(x_i) - \delta', \max
        \A_{\widetilde{K}^{+}}(x_i) + \eps'),\\
        \A_{\widetilde{K}^{-}}(x_i) \in
        (\min \A_{\widetilde{K}^{-}}(x_i) -
        \delta', \max  \A_{\widetilde{K}^{-}}(x_i) + \delta').
      \end{align*}
      
    \item[\reflb{ii5}{\rm{(v)}}] Assume that $\HF(x) \neq 0$. Then the
      global continuation map
         $$
         \psi \colon \CF(\widetilde{K}^+) \to \CF(\widetilde{K}^-)
         $$
         induced by the linear homotopy $s \to -s$ is a chain-level
         isomorphism.
    \end{itemize}
      \end{Claim}
      We note that these conditions are not logically independent. For
      example, \ref{ii2} and \ref{ii3} imply \ref{ii1}. Among
      \ref{ii1}--\ref{ii4}, only the first part of \ref{ii2} is
      non-trivial but standard. The last property, \ref{ii5}, is
      crucial for the proof. Below we show that it follows from the
      remaining ones.

      To this end, observe first that strictly speaking the complexes
      $\CF(\widetilde{K}^{\pm})$ might depend on the underlying almost
      complex structure $J$. We choose a generic one and use the same
      almost complex structure for both Hamiltonians
      $\widetilde{K}^{\pm}$. The proof of \ref{ii5} is independent of
      the choice of $J$ and we drop it from the notation.

    \begin{proof}[Proof of  \ref{ii5}]
      Note that, by \ref{ii1}, it suffices to show that
      $\ker \psi =0 $. Furthermore, observe that, by \ref{ii3}, the
      linear homotopy is constant near $y_i$. Hence, we have
     
    \begin{equation}
    \label{eq:constant_solution}
       \psi(y_i) = y_i + \dotsb,
    \end{equation}
    where the dots stand for the terms with action strictly lower than
    the action of $y_i$.
    
    Pick an arbitrary non-zero $z \in \CF(\widetilde{K}^+)$. There are
    two cases to deal with. Assume first that the leading action terms
    of $z$ or leading action terms of its image under the boundary map
    $d_{\widetilde{K}^+} ( z)$ belong to the set $\{ y_i\}$. (Here, in
    $d_{\widetilde{K}^+} ( z)$, we only consider the generators with
    non-zero coefficient in $\F$.) Then, by
    \eqref{eq:constant_solution} and since $\psi$ is a chain map, we
    have $z \notin \ker \psi$. In the second case, we assume that the
    leading action terms of $z$ belong to $\{x_i\}$ and
    $ d_{\widetilde{K}^+} (z) =0$. Note that, since $\HF(x) \neq 0$,
    these two cases cover all possibilities. More precisely, if the
    leading action terms of $z$ belong to $\{x_i\}$, then either
    $d_{\widetilde{K}^+} (z) = 0$ or, since
    $\dim \HF(x) = \vert \{ x_i \} \vert$, its leading action terms
    belong to $\{y_i\}$; and the second possibility is already covered
    by the first case.
    
    Next, recall that, since the Hamiltonians $\widetilde{K}^{\pm}$
    have the same slope at infinity, there is a continuation map going
    in the opposite direction
    $$
    \psi' \colon \CF(\widetilde{K}^-) \to \CF(\widetilde{K}^+),
    $$
    which increases the action filtration by at most $2s\max
    (f)$. Choose a filtration level
    $t> \max_i \A_{\widetilde{K}^{+}}(x_i)$ and such that
    $$
    t+2s\max (f) < \max_i \A_{\widetilde{K}^{+}}(x_i) +\eps'.
    $$
    Observe that, by \ref{ii4}, $z$ gives rise to a non-zero homology
    class in $\HF^{t+2s\max (f)}(\widetilde{K}^+)$. On the other hand,
    the composition
    $$
    \psi' \circ \psi \colon \CF^t(\widetilde{K}^+) \to \CF^{t+2s\max
      (f)}(\widetilde{K}^+)
    $$
    agrees with the inclusion map in homology. It follows that
    $z\notin \ker \psi$. This completes the proof of \ref{ii5}.
  \end{proof}

  We return now to the proof of the Lemma \ref{lem:perturbation}.
  First note that, when $\HF(x; \F)=0$, for all sufficiently small
  $s>0$, we have
\begin{equation}
\label{eq:dim_eq}
 \dim \HF^t(K^{\pm}) = \dim \HF^t(H)   
\end{equation}
for all $t\in \R$. This, for instance, follows from
\eqref{eq:constant_solution} and that $K^+ \geq H \geq K^-$.

In the rest of the proof, we assume that $\HF(x) \neq 0$.  Set
$I^-=\big(A_{K^-}(x), A_H(x)\big]$ and
$I^+=\big(\A_H(x), \A_{K^+}(x)\big]$. Then
$$
\dim \HF^t(K^{\pm}) = \const
$$
for $t \in I^{\pm}$. This is a consequence of the fact that
$\Ss(K^{\pm}) \cap I^{\pm}=\partial I^{\pm}$ (and that the intervals
$I^{\pm}$ are closed only at the right end point).  Furthermore,
\eqref{eq:dim_eq} holds for all $t \in \R- I^{\pm}$. To see this, one
can use the continuity of the associated barcodes. Alternatively, as
in \ref{ii5}, one can utilize the fact that the continuation maps
% \begin{align*}
%     &\CF(\widetilde{K}^+) \to \CF(\widetilde{H}),\\
%     &\CF(\widetilde{H}) \to \CF(\widetilde{K}^-)
% \end{align*}
\[
  \CF(\widetilde{K}^+) \to \CF(\widetilde{H}) \quad \text{and} \quad
  \CF(\widetilde{H}) \to \CF(\widetilde{K}^-)
\]
induced by linear homotopies are chain level isomorphisms and satisfy
\eqref{eq:constant_solution}.

Combining these observations, we conclude that, for sufficiently small
$s>0$, it suffices to show that at least one of the following two
inequalities holds
\begin{equation}
\label{eq:suffices}
  \dim \HF^t(K^{\pm}) \leq \dim \HF^t(H)   
\end{equation}
for some, or equivalently all, $t \in I^{\pm}$. 

Fix a sufficiently small $s>0$ and a perturbation $\tilde{H}$ as in
the claim. In what follows, we denote by $V^t(\widetilde{K}^{\pm})$
the kernel of
$$
d_{\widetilde{K}^{\pm}} \colon \CF^t(\widetilde{K}^{\pm}) \to
\CF^t(\widetilde{K}^{\pm}),
$$
where we have again suppressed the underlying almost complex structure
in the notation. Note that for $t_1 \leq t_2$ we have
$$
V^{t_1}(\widetilde{K}^{\pm}) = \{ v \in V^{t_2}(\widetilde{K}^{\pm})
\mid  \A_{\widetilde{K}^{\pm}}(v) < t_1 \}.
$$
Next, choose 
% \begin{align*}
%     t_1^{\pm} &< \min_i \A_{\widetilde{K}^{\pm}}(x_i),\\
%     t_2^{\pm} &> \max_i \A_{\widetilde{K}^{\pm}}(x_i)
% \end{align*}
\[
 t_1^{\pm} < \min_i \A_{\widetilde{K}^{\pm}}(x_i) 
\]
and
\[
t_2^{\pm} > \max_i \A_{\widetilde{K}^{\pm}}(x_i)
\]
so that all $t_i^{\pm}$ are contained in the intervals from
\ref{ii4}. We claim that
\begin{equation}
\label{eq:dim_ineq}
\dim V^{t_2^+}(\widetilde{K}^{+}) - \dim V^{t_1^+}(\widetilde{K}^{+})
\geq \dim V^{t_2^-}(\widetilde{K}^{-}) -
\dim V^{t_1^-}(\widetilde{K}^{-}). 
\end{equation}
To see this, pick a vector 
\begin{equation}
\label{eq:vector_v}
v \in V^{t_2^-}(\widetilde{K}^{-})
\setminus V^{t_1^-}(\widetilde{K}^{-}).
\end{equation}
Recall that, by \ref{ii5}, the continuation map $\psi$ is a chain
level isomorphism. Hence there exists some
$w \in V^{\infty}(\widetilde{K}^{+})$ such that $\psi(w)=v$. It
suffices to show that
$$
w \in V^{t_2^+}(\widetilde{K}^{+}) \setminus
V^{t_1^+}(\widetilde{K}^{+})
$$
or equivalently that $\A_{\widetilde{K}^{+}}(w) \in [t_1^+,
t_2^+)$. Indeed, assume that this is not the case. Then the leading
terms of $w$ belong to the set $\{y_i\}$, and we have
$$
\A_{\widetilde{K}^{+}}(w) = \A_{\widetilde{K}^{-}} (\psi(w))
=  \A_{\widetilde{K}^{-}} (v) \notin [t_1^-, t_2^-)
$$
by \eqref{eq:constant_solution}, which contradicts
\eqref{eq:vector_v}. This completes the proof of \eqref{eq:suffices}.

Finally, let us prove that \eqref{eq:dim_ineq} implies
\eqref{eq:suffices}. Set
% \begin{align*}
%   \Delta^+ &:= \dim V^{t_2^+}(\widetilde{K}^{+}) -
%              \dim V^{t_1^+}(\widetilde{K}^{+}), \\
%   \Delta^- &:=\dim V^{t_2^-}(\widetilde{K}^{-}) -
%              \dim V^{t_1^-}(\widetilde{K}^{-}). 
% \end{align*}
\[
 \Delta^+ := \dim V^{t_2^+}(\widetilde{K}^{+}) -
             \dim V^{t_1^+}(\widetilde{K}^{+}) 
\]
and
\[
  \Delta^- :=\dim V^{t_2^-}(\widetilde{K}^{-}) -
  \dim V^{t_1^-}(\widetilde{K}^{-}). 
\]
A direct computation shows that
\begin{align}
\label{eq:final1}
  \dim \HF^{t_2^-}(\widetilde{K}^-) - \dim\HF^{t_2^-}(H)
  &= 2\Delta^- - \dim \HF(x), \\
\label{eq:final2}
  \dim \HF^{t_2^+}(\widetilde{K}^+) - \dim\HF^{t_2^+}(H)
  &=   -\big(2\Delta^+ - \dim \HF(x)\big). 
\end{align}
To be more specific, \eqref{eq:final1} and \eqref{eq:final2} follow
from
\[
\dim V^{t_1^-}(\widetilde{K}^-) = \dim V^{t_1^-}(\widetilde{H}) = \dim
V^{t_2^-}(\widetilde{H})
\]
and
\[
\dim V^{t_2^+}(\widetilde{K}^+) = \dim V^{t_2^+}(\widetilde{H}) = \dim
V^{t_2^-}(\widetilde{H}),
\]
which can be, for instance, readily derived from similar statements
for the corresponding filtered homology spaces. It immediately follows
from \eqref{eq:dim_ineq} that if \eqref{eq:final1} is non-negative
then \eqref{eq:final2} is non-positive, establishing
\eqref{eq:suffices}.  This completes the proof of Lemma
\ref{lem:perturbation}. \qed

\section{Proof of Theorem \ref{thm:sh=1}}
\label{sec:pf-sh=1}
\subsection{Support of the local homology}
\label{sec:support}
In this section, we prove Part \ref{sh1} of the theorem; Part
\ref{sh2} is proved in the next section.  We start with the case where
the characteristic of $\F$ is zero, e.g., $\F=\Q$. This case is
essentially treated in \cite{CGG:Reeb-HZ}.  For the sake of
completeness, we briefly outline the argument.

First, recall that $\dim \SH(x;\F)$ is even for every field
$\F$. (This is a consequence of the fact that this is the homology of
a complex with an even number of generators.) Hence
$\dim \SH(x;\F)\geq 2$ when $\SH(x;\F)\neq 0$. If we had
$\dim \SH(x;\F)> 2$, we would also have $\dim \SH^t(W;\F)>1$ for some
$t$ close to $\CA(x)$, which is impossible. (This proves Remark
\ref{rmk:sh=1}.)

For $\F=\Q$, Part \ref{sh1} follows immediately from \eqref{eq:SH-CH},
which in turn is a consequence of the fact that the shift operator
$\DD$ in the Gysin exact sequence, \eqref{eq:loc-Gysin}, vanishes for
the local homology over $\Q$ or, more generally, when $\charr\F=0$;
\cite{GG:LS}.

Dealing with the case where $\charr \F=p$, let us assume for the sake
of simplicity that $\F=\F_p$. Then the shift operator $\DD$ need not
be zero and a different argument is needed. Let us begin with the
first part of the statement: the support of $\SH(x;\F_p)$ comprises
two consecutive degrees. This is a purely algebraic fact.  Symplectic
homology is defined over any ring and, in particular, over $\Z$, and
throughout the rest of the proof it is convenient to regard symplectic
homology groups as abelian groups rather than vector spaces. If
$\rk \SH(x; \Z)\geq 1$, the assertion follows from the case of $\F=\Q$
by the universal coefficient formula. Thus, without loss of
generality, we may assume that $\SH(x;\Z)$ is a group of finite order.

Again, by the universal coefficient formula, we have
$$
0 \to \SH_m(x;\Z)\otimes \Z_p \to \SH_m(x;\Z_p) \to
\Tor_1\big(\SH_{m-1}(x;\Z), \Z_p\big) \to 0,
$$
where $\Tor_1$ is over $\Z$. Therefore, $\SH_m(x;\Z_p)$ is non-zero if
and only if $\SH_m(x;\Z)$ or $\SH_{m-1}(x;\Z)$ has a torsion component
$\Z_k$ with $p|k$.

Let $m$ be the lowest degree such that $\SH_{m}(x;\Z_p)\neq 0$. To
prove the statement, it suffices to show that
$\SH_{m+1}(x;\Z_p)\neq 0$, and we argue as follows.

By our choice of $m$, we have $\SH_{m-1}(x;\Z_p) = 0$. Hence
$\SH_{m-1}(x;\Z)\otimes \Z_p=0$, i.e., $\SH_{m-1}(x;\Z)$ has no
torsion component $\Z_k$ with $p|k$. Then
$$
\Tor_1\big(\SH_{m-1}(x;\Z),\Z_p\big)=0.
$$
As a consequence, $\SH_m(x;\Z)$ has a torsion component $\Z_k$ with
$p|k$ since $\SH_m(x;\Z_p)\neq 0$. Then
$\Tor_1(\SH_m(x;\Z),\Z_p)\neq 0$, and thus $\SH_{m+1}(x;\Z_p)\neq 0$,
which proves the first statement in Part \ref{sh1} of the theorem for
every field $\F$.

Finally, let $q$ be the lowest degree where $\COH(x;\F)\neq 0$. Then
$\SH_q(x;\F)\neq 0$ by \eqref{eq:loc-Gysin} and $\SH_m(x;\F)= 0$
for all $m<q$. Thus $\SH(x;\F)$ is supported in degree $q$ and, as we
have just shown, also in degree $q+1$. This completes the proof of
Part \ref{sh1}.\qed

\begin{Remark}
  \label{rmk:visibility}
  As pointed out earlier, a closed Reeb orbit is $\F$-visible if and
  only if it is equivariantly $\F$-visible. This can be seen from the
  Gysin long exact sequence as follows. The sequence readily implies
  that $\SH(x;\F)=0$ whenever $\COH(x;\F)=0$, i.e., non-equivariantly
  visible closed orbits are automatically equivariantly
  visible. Conversely, assume that $x$ is equivariantly visible and
  let $m$ be the lowest degree such that $\COH_m(x;\F)\neq 0$. Then
  $\SH_m(x;\F)\neq 0$ by \eqref{eq:loc-Gysin}.
\end{Remark}

\subsection{Dynamical convexity and one cluster}
\label{sec:pf-sh2}
Let us now prove Part \ref{sh2} of Theorem \ref{thm:sh=1}.

\subsubsection{Index recurrence}
One ingredient of the proof is the fact that the Conley--Zehnder
indices of the iterates of a linear symplectic map or a finite
collection of such maps enjoy a certain recurrence property. This is
essentially a combinatorial or geometry of numbers result.

\begin{Theorem}[Index recurrence theorem, the non-degenerate case]
\label{thm:IRT}
Consider a finite collection of strongly non-degenerate elements
$\Phi_1,\dotsc,\Phi_r$ in $\TSp(2m)$ with positive mean indices. Then
for any $\eta>0$ and any $\ell_0\in\N$, there exists an integer
sequence $d_j\to\infty$ and $r$ integer sequences $k_{ij}\to\infty$,
$i=1,\dotsc, r$, such that for all $i$ and $j$, and all $\ell\in\Z$ in
the range $1\leq |\ell|\leq \ell_0$, we have
\begin{itemize}
\item[\reflb{IR1}{\rm{(IR1)}}]
  $\big|\hmu(\Phi^{k_{ij}}_i)-d_j\big|<\eta$ and, as a consequence,
  $\big|\mu(\Phi^{k_{ij}}_i)-d_j\big|\leq m$ when $\eta>0$ is small
  enough; and
 
\item[\reflb{IR2}{\rm{(IR2)}}]
  $\mu(\Phi^{k_{ij}+\ell}_i)= d_j + \mu(\Phi^\ell_i)$.
\end{itemize}
Moreover, for any $N\in \N$ we can have all $d_j$ and $k_{ij}$
divisible by~$N$.
\end{Theorem}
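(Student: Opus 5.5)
The plan is to reduce the statement to simultaneous Diophantine approximation, via the standard normal form for the Conley--Zehnder index of iterates. For a strongly non-degenerate $\Phi\in\TSp(2m)$ I will use the decomposition of the iterated index (Long's splitting, or equivalently the Bott-type formula):
\[
\mu(\Phi^k)=k\,\hmu(\Phi)+\sum_{\lambda} \sigma_\lambda(k) ,
\]
where the correction term is a bounded sum over the elliptic eigenvalues $e^{2\pi i\theta}$, $\theta\in(0,1)$. Each contribution is a function of the fractional part $\{k\theta\}$ only, of the form $\pm\big(2\lfloor k\theta\rfloor+1-2k\theta\big)$ up to sign conventions. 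Hyperbolic parts, including the alternating $(-1)$-type ones, contribute exactly linearly in $k$ with mean index shift, possibly with a parity term depending on $k$ modulo~$2$.

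So $\mu(\Phi_i^k)-k\hmu(\Phi_i)$ is a function of $k\bmod 2$ and of the vector $\big(\{k\theta_{i,1}\},\dots\big)$. Non-degeneracy of all iterates guarantees that $k\theta_{i,s}\notin\Z$ for every $k$, which is what makes these fractional-part functions locally constant near the relevant points.

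First I find times $k_{ij}$ such that all the numbers $k_{ij}\hmu(\Phi_i)$ are simultaneously close to a common integer $d_j$ (giving \ref{IR1}). At the same time I need all rotation numbers $k_{ij}\theta_{i,s}$ to be close to integers. Write $k_{ij}=c_i T_j$, where the $c_i\in\N$ are fixed weights. Since the $\hmu(\Phi_i)>0$, I choose the weights so that the $c_i\hmu(\Phi_i)$ are compatible up to rational approximation. The cleanest route is to apply Dirichlet's/Kronecker's simultaneous approximation theorem to the finite vector
\[
\big(\hmu(\Phi_i)^{-1},\ \theta_{i,s},\ \theta_{i,s}\hmu(\Phi_i)^{-1}\big)_{i,s}.
\]
Concretely, I take $d_j$ divisible by $N$ with $d_j/\hmu(\Phi_i)$ within $\eta'$ of an integer $k_{ij}$ divisible by $2N$, and with $k_{ij}\theta_{i,s}$ within $\eta'$ of integers. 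This is a simultaneous homogeneous approximation of the real vector $\big(1/\hmu(\Phi_i),\,\theta_{i,s}/\hmu(\Phi_i)\big)$ by multiples of an integer $d$ lying in $N\Z$. Dirichlet's theorem (applied to $d=Nd'$) provides infinitely many such $d'$, hence $d_j\to\infty$. Rounding then gives $|k_{ij}\hmu(\Phi_i)-d_j|\le \hmu(\Phi_i)\eta'<\eta$, and $k_{ij}\theta_{i,s}$ lies within roughly $\eta'(1+\theta_{i,s})$ of an integer. Evenness of $k_{ij}$ kills the parity ambiguity. The bound $|\mu-d_j|\le m$ then follows from the standard inequality $|\mu(\Psi)-\hmu(\Psi)|\le m$, with a strict version for non-degenerate $\Psi$, combined with integrality.

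For \ref{IR2}, I expand $\mu(\Phi_i^{k_{ij}+\ell})$ using the formula above. The linear part gives $(k_{ij}+\ell)\hmu = k_{ij}\hmu+\ell\hmu$. For the correction terms, $\{(k_{ij}+\ell)\theta\}$ is within $\ell_0\cdot$(small) of $\{\ell\theta\}$. Because $\ell\theta\notin\Z$ for $1\le|\ell|\le\ell_0$, the fractional-part functions are locally constant near $\{\ell\theta\}$, so they take the same values once $\eta'$ is below $\min_{i,s,|\ell|\le\ell_0}\operatorname{dist}(\ell\theta_{i,s},\Z)$.

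The step I expect to be the main obstacle is this remaining point of \ref{IR2}: the linear parts combine to $k_{ij}\hmu+\ell\hmu$, but I need $\mu(\Phi_i^{k_{ij}+\ell})=d_j+\mu(\Phi_i^\ell)$ exactly, while $k_{ij}\hmu$ is only close to $d_j$, not equal to it. The resolution is that $\mu(\Phi_i^{k_{ij}})$ itself must equal $d_j$ plus the exactly accounted error. Writing $\hmu=\sum$ (hyperbolic integer parts) $+\sum 2\theta$ (elliptic parts), the discrepancy $k_{ij}\hmu-d_j$ equals $2\sum_s(k_{ij}\theta_s-n_s)$, where the $n_s$ are the nearest integers. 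This is absorbed exactly by the changes $\lfloor (k+\ell)\theta\rfloor-\lfloor\ell\theta\rfloor=n_s$.

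So the bookkeeping is best done directly with integer parts: $\mu(\Phi^k)$ equals an integer linear expression in $k$ from the hyperbolic parts plus $\sum_s\pm(2\lfloor k\theta_s\rfloor+1)$. I will then choose $d_j$ to be exactly that integer expression evaluated with $\lfloor k_{ij}\theta_s\rfloor$ replaced by $n_s$. This has two requirements. It forces $d_j$ to depend on $i$ only through quantities I must make agree across $i$, which is where the simultaneous approximation with a common $d_j$ is really used. It also requires rechecking the divisibility by $N$ with this exact definition. I would verify carefully that for $\ell=0$ this definition is consistent with \ref{IR1}.
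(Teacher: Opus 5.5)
The paper does not prove this theorem. It only remarks that it ultimately follows from Minkowski's theorem and is the non-degenerate case of results in \cite{DLW} and \cite{GG:LS}.

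Your strategy is the standard one behind those references. You write $\mu(\Phi_i^k)=k\Delta_i+\sum_s\epsilon_{i,s}\big(2\lfloor k\theta_{i,s}\rfloor+1\big)$, with $\Delta_i\in\Z$, $\epsilon_{i,s}=\pm1$ and $\theta_{i,s}\in(0,1)$ irrational, so that $\hmu(\Phi_i)=\Delta_i+2\sum_s\epsilon_{i,s}\theta_{i,s}$. You then apply Dirichlet approximation to $\big(1/\hmu(\Phi_i),\,\theta_{i,s}/\hmu(\Phi_i)\big)_{i,s}$ along multiples of $N$.

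However, as written the proposal stops exactly at the step you flag as the main obstacle. Let $n_{i,s}$ be the integer nearest to $k_{ij}\theta_{i,s}$ and set $D_i:=k_{ij}\Delta_i+2\sum_s\epsilon_{i,s}n_{i,s}$. Once $|k_{ij}\theta_{i,s}-n_{i,s}|<\operatorname{dist}(\ell\theta_{i,s},\Z)$, the floor bookkeeping gives $\mu(\Phi_i^{k_{ij}+\ell})=\mu(\Phi_i^\ell)+D_i$ for either sign of $\ell$. You assert that $k_{ij}\hmu(\Phi_i)-d_j=2\sum_s\epsilon_{i,s}(k_{ij}\theta_{i,s}-n_{i,s})$, which is precisely the claim $D_i=d_j$, but you give no justification. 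You then propose instead to define $d_j$ as $D_i$. That leaves open why this integer is independent of $i$ and divisible by $N$, which is the whole content of having a common $d_j$.

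The missing observation is integrality. Keep $d_j$ equal to the Dirichlet integer $d\in N\Z$. Then
\[
|D_i-d_j|\le \big|k_{ij}\hmu(\Phi_i)-d_j\big|+2\sum_s\big|k_{ij}\theta_{i,s}-n_{i,s}\big|,
\]
and the right-hand side is less than $1$ once the approximation is fine enough. Since $D_i$ and $d_j$ are integers, $D_i=d_j$ for every $i$. So agreement across $i$ and divisibility by $N$ come for free, and (IR2) follows.

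The second half of (IR1) also drops out directly. The difference $\mu(\Phi_i^{k_{ij}})-d_j=\sum_s\epsilon_{i,s}\big(2\lfloor k_{ij}\theta_{i,s}\rfloor+1-2n_{i,s}\big)$ is a sum of at most $m$ terms equal to $\pm1$.

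Three minor points:
\begin{itemize}
\item In $\TSp(2m)$, hyperbolic blocks contribute exactly $k\Delta_h$, with $\Delta_h$ odd in the negative hyperbolic case. So no parity correction arises and evenness of $k_{ij}$ is unnecessary.
\item If you want $k_{ij}\in 2N\Z$ anyway, approximate $1/(2N\hmu(\Phi_i))$ rather than $1/\hmu(\Phi_i)$.
\item Drop the fixed-weights ansatz $k_{ij}=c_iT_j$. It forces $c_1\hmu(\Phi_1)=c_2\hmu(\Phi_2)$, so it fails whenever the mean indices are rationally independent.
\end{itemize}
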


In particular, by \ref{IR2} arbitrarily long segments of the sequences
$\mu(\Phi_i^k)$ repeat infinitely many times up to a common
shift. Hence, the name of the theorem. We call the iterates $k_{ij}$
from the theorem an \emph{index recurrence event}.

Ultimately, Theorem \ref{thm:IRT} is a consequence of Minkowski's
theorem from geometry of numbers. It is the non-degenerate case of
either one of the following two much more general results: the
Enhanced Common Index Jump Theorem from \cite{DLW} and the Index
Recurrence Theorem from \cite{GG:LS}. (These two results are closely
related and one can obtain the latter as a consequence of the former,
although this derivation is not immediate.) Theorem \ref{thm:IRT} can
also be derived, again with some effort, from the Common Index Jump
Theorem from \cite{Lo,LZ}. A substantially more general version of the
Index Recurrence Theorem accounting for extra real parameters -- the
``actions'' of the paths $\Phi_i$ -- and also giving more information
on the sequences $d_j$ and $k_{ij}$ is proved in \cite{CGG:Reeb-HZ}.

Recall that a strongly non-degenerate ``map'' $\Phi\in\TSp(2m)$ is
said to be dynamically convex if $\mu(\Phi^k)\geq m+2$ for all
$k\in\N$. (It is easy to show that this condition is equivalent to
that $\mu(\Phi)\geq m+2$; see, e.g., \cite[Sec.\ 4.2]{GG:LS}.)
Clearly, a dynamically convex map has positive mean index. Then it is
not hard to show that Theorem \ref{thm:IRT} yields the following
corollary; see, e.g., \cite{GG:LS,CGG:Reeb-HZ}.

\begin{Corollary}
\label{cor:DC-IRT}
In the setting of Theorem \ref{thm:IRT}, assume that
$\Phi_1,\dotsc,\Phi_r$ are dynamically convex. Then for any $\eta>0$,
there exists an integer sequence $d_j\to\infty$ and $r$ integer
sequences $k_{ij}\to\infty$, $i=1,\dotsc, r$, such that for all $i$
and $j$ in addition to \ref{IR1}, we have
\begin{itemize}
\item[\reflb{IR2'}{\rm{(IR2')}}]
  $\big|\mu(\Phi^{k}_i)-d_j\big|\geq m+2$ when $k\neq k_{ij}$.
\end{itemize}
Moreover, for any $N\in \N$, we can have all $d_j$ and $k_{ij}$
divisible by~$N$.
\end{Corollary}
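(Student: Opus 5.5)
The plan is to deduce Corollary \ref{cor:DC-IRT} from Theorem \ref{thm:IRT}, using the standard linear-algebra consequences of dynamical convexity for the index sequences $\mu(\Phi_i^k)$. I apply Theorem \ref{thm:IRT} with a suitable $\ell_0$, so that \ref{IR2} controls the indices in a window $|k-k_{ij}|\le \ell_0$. Outside the window I use a mean index estimate.

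\textbf{Facts used.} For a strongly non-degenerate $\Phi$, I use the classical bound
$$|\mu(\Phi^k)-k\hmu(\Phi)|\le m .$$
Dynamical convexity further gives the following, see \cite[Sec.\ 4.2]{GG:LS}:
\begin{itemize}
\item $\mu(\Phi^{k+l})\ge \mu(\Phi^k)+\mu(\Phi^l)-m+1$, or some similar superadditivity, so the index sequence is strictly increasing;
\item more precisely, $\mu(\Phi^{k+l})-\mu(\Phi^k)\ge 2$ for $l\ge 1$, and indeed $\ge m+2$ once suitable iterates are involved;
\item $\hmu(\Phi)>2$.
\end{itemize}
The key quantitative statement I want is: for $\ell\ge1$, $\mu(\Phi^{k+\ell})\ge \mu(\Phi^k)+\mu(\Phi^\ell)-m\ge \mu(\Phi^k)+2$. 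The sharper Bott-type inequality $\mu(\Phi^\ell)\ge m+2$ for $\ell\ge1$ then yields $\mu(\Phi^{k+\ell})-\mu(\Phi^k)\ge 2$ in general, and $\ge m+2$ at the recurrence event via \ref{IR2}.

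\textbf{Step 1: the window $1\le|\ell|\le\ell_0$.} For $\ell\ge1$, \ref{IR2} gives
$$\mu(\Phi_i^{k_{ij}+\ell})=d_j+\mu(\Phi_i^\ell)\ge d_j+m+2$$
directly. For $\ell\le -1$, write $\ell=-|\ell|$. Here \ref{IR2} with negative $\ell$ involves $\mu(\Phi_i^{-|\ell|})$. By the symmetry $\mu(\Phi^{-1})=-\mu(\Phi)$, valid for non-degenerate elements under the convention in \cite{GG:LS}, this gives $\mu(\Phi_i^{k_{ij}-|\ell|})=d_j-\mu(\Phi_i^{|\ell|})\le d_j-m-2$. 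So \ref{IR2'} holds in the window.

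\textbf{Step 2: far from $k_{ij}$.} For $|k-k_{ij}|>\ell_0$ I use monotonicity of $k\mapsto\mu(\Phi_i^k)$ under dynamical convexity. If $k>k_{ij}+\ell_0$, then $\mu(\Phi_i^k)\ge\mu(\Phi_i^{k_{ij}+1})\ge d_j+m+2$. Similarly, if $k<k_{ij}-\ell_0$, then $\mu(\Phi_i^k)\le \mu(\Phi_i^{k_{ij}-1})\le d_j-m-2$. This step needs only strict monotonicity, not any choice of $\ell_0$, so $\ell_0=1$ would already suffice. Alternatively, the mean index bound $|\mu(\Phi^k)-k\hmu|\le m$, together with $\hmu>2$ and a large $\ell_0$ such that $\ell_0\hmu>2m+2$, handles this range without monotonicity. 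Divisibility by $N$ is inherited verbatim from Theorem \ref{thm:IRT}, and \ref{IR1} is unchanged.

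\textbf{Main obstacle.} The delicate point is Step 1 for negative $\ell$, and the precise monotonicity statement. I expect to need the fact from \cite[Sec.\ 4.2]{GG:LS} that dynamical convexity forces $\mu(\Phi^{k+1})\ge\mu(\Phi^k)+2$, or at least strict increase. I also need the sign convention for $\Phi^{-\ell}$ in \ref{IR2}. If the inverse-symmetry convention is unavailable, I would instead combine monotonicity with the lower bound $\mu(\Phi_i^{k_{ij}})\ge d_j-m$ and an increment estimate $\mu(\Phi^{k})-\mu(\Phi^{k-1})\ge 2$. That route is weaker, so I would aim to verify the symmetry route first.
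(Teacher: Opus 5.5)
Your argument is correct and is the derivation the paper has in mind when it says Theorem \ref{thm:IRT} readily yields the corollary, citing \cite{GG:LS,CGG:Reeb-HZ}. In the window $1\le|\ell|\le\ell_0$ you use \ref{IR2} together with $\mu(\Phi^{-\ell})=-\mu(\Phi^{\ell})$ and dynamical convexity. Outside the window you use monotonicity of $k\mapsto\mu(\Phi^k)$, or alternatively the bound $|\mu(\Phi^k)-k\hmu(\Phi)|\le m$; that alternative needs only $\hmu>0$ and a large $\ell_0$.

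Two side remarks in your write-up are wrong, though neither is used in the main argument:
\begin{itemize}
\item The superadditivity with $-m+1$ in your list of facts is false as stated. For an elliptic element of $\TSp(2)$ with $\{ka\}+\{la\}<1$, equality holds in the correct version with $-m$.
\item The fallback in your last paragraph would only give $\mu(\Phi_i^{k_{ij}-1})\le d_j+m-2$, not the required $d_j-m-2$.
\end{itemize}
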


\subsubsection{Proof of Part \ref{sh2} of Theorem \ref{thm:sh=1}}
Throughout the proof, we will need to work simultaneously with a
ground field $\F$ of characteristic $p=0$, e.g., $\F=\Q$, and also of
characteristic $p=2$, i.g., $\F=\F_2$. Slightly modifying the notation
from Section \ref{sec:beg-end}, denote by $\PP_p$ the set of closed
Reeb orbits visible over the ground field $\F$ of characteristic
$p$. In other words, due to non-degeneracy, $\PP_p$ comprises all
closed Reeb orbits when $p=2$ and all good orbits otherwise. Thus,
$\PP_0\subset \PP_2$.  As before, let us formally add to $\PP_p$ an
extra element corresponding to the domain itself, denoting the
resulting set by $\PP'_p$. We assign zero action and index $n$ to this
element and think of it as the ``beginning'' of the unique bar
starting at action zero. Furthermore, let $\CS_p$ be the action
spectrum over $\F$ with zero added, i.e., the set of actions of the
orbits in $\PP'_p$. When the role of $p$ is immaterial, we omit it
from the notation.

In the setting of the theorem, every point in $\CS$ is the beginning
and the end of a bar, except for $0$ which is just the beginning of
the first bar. This follows readily from the long exact
sequence. Furthermore, all orbits in $\PP'$ have distinct actions.

Denote by $\CB$ or $\CB_p$ the barcode of the graded symplectic
homology persistence module over $\F$. In the proof, we will use the
maps $\beg\colon \CB\to \PP'$ (the beginning of $I$) and
$\en\colon \CB\to \PP$ from Theorem \ref{thm:beg-end}. By Remark
\ref{rmk:beg-end}, these maps are uniquely defined and, as is easy to
see, one-to-one.

Let us list the orbits in $\PP'=\{x_0,x_1,\dotsc \}$ in the order of
increasing action:
$$
0=\CA(x_0)<\CA(x_1)<\CA(x_2)<\dotsb .
$$
Likewise, we order the bars in $\CB$ as $I_0,I_1,I_2,\dotsc$, where
$x_i=\beg(I_i)$ and $x_{i+1}= \en(I_i)$. Thus we obtain strict orders
on $\PP'$ and $\CB$; and the inclusions $\PP'_0\subset \PP'_2$ and
$\CB_0\subset \CB_2$ preserve the order. In what follows, it is
convenient for our purposes to work with the sequence of the
Conley--Zehnder indices $\mu(x_i)$ starting with $i=1$, i.e., with the
first genuine closed orbit, but have the sequence of the bar degrees
$\deg(I_i)$ starting with $i=0$, i.e., with the first bar. These
sequences are related:
\begin{equation}
  \label{eq:mu-deg1}
  \textrm{ either }  \mu(x_{i})=\deg(I_i) \quad \text{or} \quad
  \mu(x_{i})=\deg(I_i)-1
\end{equation}
and
\begin{equation}
  \label{eq:mu-deg2}
  \textrm{ either }  \mu(x_{i+1})=\deg(I_i) \quad \text{or} \quad
  \mu(x_{i+1})=\deg(I_i)+1.
\end{equation}
This again follows from the long exact sequence.  Note also that
$\CB_2$ is a subdivision or a partition of $\CB_0$, i.e., in the
transition from $p=0$ to $p=2$, every bar remains unchanged or gets
partitioned into a few shorter bars.

\begin{Lemma}
  \label{lemma:degree_seq}
  Under the conditions of the theorem, we
  have:
  \begin{itemize}
  \item[\reflb{deg-0}{\rm{(a)}}] Both sequences $\mu(x_{i\geq 1})$ and
    $\deg(I_{i\geq 0})$ are increasing (possibly non-strictly).  All
    $\deg(I_i)$ have the same parity as $n$, and this sequence is a
    map onto $n-2+2\N$. The sequence $\mu(x_i)$ is strictly increasing
    at the values of $\mu(x_i)\in n- 1 + 2\N$, and for every
    $m\in n-1+2\N$ there is exactly one $x\in \PP$ with $\mu(x)=m$.
    Furthermore,
    \begin{equation}
      \label{eq:deg-0}
      \mu(x_i)=\mu(x_{i+1})\textrm{ iff }
      \deg(I_{i-1})=\deg(I_{i})=deg(I_{i+1}).
\end{equation} 

\item[\reflb{deg-1}{\rm{(b)}}] For $p=0$, the sequences
  $\mu(x_{i\geq 1})$ and $\deg(I_{i\geq 0})$ are
  $\{n+1,n+3,n+5,\dotsc\}$ and, respectively,
  $\{n,n+2,n+4,\dotsc\}$. In particular, these sequences are strictly
  increasing.

\item[\reflb{deg-2}{\rm{(c)}}] Let $x$ and $x'$ be two adjacent
  elements in $\PP_0$, i.e., good orbits such that there are no good
  orbits with action in $\big(\CA(x), \CA(x')\big)$. Then for every
  $y\in \PP_2$ with action in that range, we have
  $\mu(y)=\mu(x)+1$. Equivalently, every bar $I\in \CB_0$ either
  persists or gets partitioned in $\CB_2$ into a sequence of bars of
  degree exactly $\deg(I)$.
  \end{itemize}
  
\end{Lemma}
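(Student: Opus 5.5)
The plan is to read off everything from the long exact sequence \eqref{eq:long-exact} at each action value, using that $\dim\SH^t(W;\F)=1$ for all $t>0$ and that, by Theorem \ref{thm:sh=1}\ref{sh1} and Remark \ref{rmk:sh=1}, every visible orbit $x$ has $\SH(x;\F)$ two-dimensional, supported in degrees $\mu(x)$ and $\mu(x)+1$ (by non-degeneracy).

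\emph{Part (a).} Fix $x_i$ and write $d_i=\deg(I_i)$. In \eqref{eq:long-exact} for a short interval around $\CA(x_i)$, exactly one bar $I_{i-1}$ dies and exactly one bar $I_i$ is born. The death consumes $\SH_{d_{i-1}+1}(x_i)$ and the birth consumes $\SH_{d_i}(x_i)$. Since $\SH(x_i)$ is one-dimensional in each of the degrees $\mu(x_i)$ and $\mu(x_i)+1$, we get
\[
\{d_i,\ d_{i-1}+1\}=\{\mu(x_i),\ \mu(x_i)+1\}.
\]
Hence there are only two cases. Either $d_i=d_{i-1}=\mu(x_i)$ (a ``non-jump''), or $d_i=d_{i-1}+2=\mu(x_i)+1$ (a ``jump''). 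Starting from $d_0=n$, this shows:
\begin{itemize}
\item $d_i$ is non-decreasing and congruent to $n$ mod $2$;
\item $\mu(x_i)\in n+2\N\cup\{n\}$ at non-jumps and $\mu(x_i)\in n-1+2\N$ at jumps;
\item $\mu(x_i)$ is non-decreasing;
\item each value in $n-1+2\N$ is attained at most once, namely at the unique jump into the corresponding degree.
\end{itemize}
The equivalence \eqref{eq:deg-0} is a short case check: $\mu(x_i)=\mu(x_{i+1})$ forces both $x_i$ and $x_{i+1}$ to be non-jumps with the same $d$. The remaining point, and the main obstacle in (a), is surjectivity: the $d_i$ must be unbounded, so every value in $n-1+2\N$ is attained exactly once. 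For this I will use the equivariant spectral sequence \eqref{eq:spec-CH} together with Example \ref{ex:CH}. There must be orbits with $\COH_m(x)\neq0$ for every $m\in n-1+2\N$ (shifted appropriately). Since $\COH(x)$ sits in degrees at least $\mu(x)$ and the nonzero local homology lives near $d_i$, an eventually constant $d_i$ would give a contradiction.

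\emph{Part (b).} For $p=0$, \eqref{eq:SH-CH} gives $\COH(x;\Q)=\Q$ in degree $\mu(x)$ for every good orbit. I will exclude non-jumps using \eqref{eq:odd}. A good orbit $y$ with $\mu(y)=q\in n+2\N$ would force two orbits with $\COH$ in degree $q-1$ or in degree $q+1$. Both of these degrees lie in $n-1+2\N$, which contradicts the uniqueness from (a). So every step is a jump, giving $d_i=n+2i$ and $\mu(x_i)=n+2i-1$.

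\emph{Part (c).} Let $I\in\CB_0$ be the bar $(\CA(x),\CA(x')]$, and take $t$ in that range. By the universal coefficient theorem, $\SH^t(W;\Z)$ has free rank one in degree $\deg I$, since this is its rank over $\Q$. Hence $\SH^t(W;\Z)\otimes\F_2\hookrightarrow\SH^t(W;\F_2)$ is nonzero in degree $\deg I$. Because $\dim\SH^t(W;\F_2)=1$, every $\F_2$-bar through $t$ has degree exactly $\deg I$. So $\CB_2$ refines $I$ into bars all of degree $\deg I$. By (a) applied with $p=2$, every intermediate orbit $y$ is then a non-jump with $\mu(y)=\deg I$. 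By (b), $\deg I=\mu(x)+1$, which gives $\mu(y)=\mu(x)+1$.
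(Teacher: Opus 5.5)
Your (a) and (b) follow the paper. The multiset identity $\{d_i,d_{i-1}+1\}=\{\mu(x_i),\mu(x_i)+1\}$ is the dichotomy \eqref{eq:P1}/\eqref{eq:P2}, and you derive it more carefully than the paper, which reads it off from \eqref{eq:mu-deg1} and \eqref{eq:mu-deg2}. In (b) you exclude non-jumps via \eqref{eq:odd} plus uniqueness, exactly as the paper does.

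\textbf{Unboundedness of $d_i$.} This is the step that fails as written; the paper simply asserts it. You cite that $\COH(x)$ lives in degrees $\geq\mu(x)$, but that is a lower bound and points the wrong way: you need an upper bound on the support. Over $\F_2$ there is none. For an even iterate $x=y^k$, $\COH(x;\F_2)$ is $\H_*(B\Z_k;\F_2)$ shifted by $\mu(x)$ (cf.\ the proof of Proposition \ref{prop:ellipsoid}), so it is non-zero in every degree $\geq\mu(x)$. This is exactly the mechanism behind Theorem \ref{thm:spectral}. Consequently orbits of bounded index can account for $\COH^+_m(W;\F_2)\neq 0$ in all large degrees, and a bounded sequence $d_i$ gives no contradiction.

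The fix is to argue over $\Q$. There \eqref{eq:SH-CH} confines $\COH(x;\Q)$ to degree $\mu(x)$, so \eqref{eq:spec-CH} and Example \ref{ex:CH} produce good orbits of every index in $n-1+2\N$. Since $\PP_0\subset\PP_2$ and $\mu(x_i)\leq d_i$, unboundedness then holds for $p=2$ as well.

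\textbf{The case $\mu=n$ in (b).} You only rule out non-jumps with $\mu\in n+2\N$, yet your own list in (a) allows a non-jump with $\mu=n$ at the first step. The paper passes over this case too. It is excluded by the same count at $q=n$. No good orbit has index $n-1$, so the degree-$n$ class must be hit from degree $n+1$. Then $\COH^+_{n+1}(W;\Q)\neq 0$ forces two good orbits of index $n+1$, contradicting uniqueness.

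\textbf{Part (c).} Here your route differs from the paper's. The paper deduces (c) combinatorially from (a) and (b). If $\mu(x)=m$, then $\mu(x')=m+2$, and monotonicity over $\F_2$ gives $m\leq\mu(y)\leq m+2$. The values $m$ and $m+2$ lie in $n-1+2\N$, so each is attained only once in $\PP_2$, namely by $x$ and $x'$.

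You instead lift to $\Z$. By the universal coefficient theorem, the free summand of $\SH^t_{\deg I}(W;\Z)$ survives in $\SH^t(W;\F_2)$. Hence the unique $\F_2$-bar through each $t\in I$ has degree $\deg I$, and the dichotomy makes every intermediate orbit a non-jump of index $\deg I=\mu(x)+1$. This is correct. It uses that $\SH^t(W;\Z)$ is finitely generated for $t\notin\CS$, and that $\CA(x),\CA(x')\in\CS_2$ are break points of $\F_2$-bars.

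Your version proves the bar-degree formulation directly and never uses the uniqueness statement over $\F_2$, at the cost of invoking the integral structure. The paper's deduction is two lines once (a) and (b) are in place.
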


\begin{proof}
  By \eqref{eq:mu-deg1} and \eqref{eq:mu-deg2}, for every $i$ there
  are exactly two alternatives: either
\begin{equation}
  \label{eq:P1}
  \deg(I_i)=\mu(x_{i+1})-1 \quad \text{and} \quad \deg(I_{i+1})=\mu(x_{i+1})+1
\end{equation}
or
\begin{equation}
  \label{eq:P2}
  \deg(I_i)=\mu(x_{i+1})=\deg(I_{i+1}).
\end{equation}
Therefore, the sequence $\deg(I_i)$ is increasing with
$\deg(I_{i+1})=\deg(I_i)+2$ in the case of \eqref{eq:P1} and
$\deg(I_{i+1})=\deg(I_i)$ in \eqref{eq:P2}. As a consequence,
$\deg(I_i)$ has the same parity as $n$ since $\deg(I_0)=n$, and this
sequence is a surjective map onto $n-2+2\N$. Turning to the sequence
$\mu(x_i)$, note that $\mu(x_i)\leq \mu(x_{i+1})$ by
\eqref{eq:mu-deg1} and \eqref{eq:mu-deg2} regardless of whether
\eqref{eq:P1} or \eqref{eq:P2} holds, and
$\mu(x_{i+1})\leq \mu(x_i)+2$. Moreover, \eqref{eq:P2} cannot happen
when $\deg(x_i)$ has parity $n+1$; for then $\deg(I_i)$ would have
parity $n+1$. Hence, $\deg(x_i)$ is strictly increasing at the points
of $n-1+2\N$. It follows that for every $m$ in this set there is
exactly one $x\in \PP$ with $\deg(x)=m$.  Finally, \eqref{eq:deg-0} is
again a consequence of \eqref{eq:mu-deg1} and \eqref{eq:mu-deg2}. This
proves \ref{deg-0}.

To establish \ref{deg-1}, note that if we had $y\in \PP_0$ with
$q:=\mu(y)\in n+2\N$, we would have at least two orbits in $\PP_0$ of
degree $q-1$ or $q+1$ due to \eqref{eq:spec-CH} or, more specifically,
\eqref{eq:odd} and the condition that $\dim \SH^t(W;\Q)=1$. This is
impossible since, as we have shown, for every $m\in n-1+2\N$ an orbit
$x\in \PP_p$ with $\deg(x)=m$ is unique for any $p$ and, in
particular, $p=0$. This completes the proof of the statement that
$\mu$ on $\PP_0$ takes values in $n-1+2\N$ and is a bijection.
Together with the fact that all $\deg(I_i)$ have parity $n$, this also
completely rules out \eqref{eq:P2}. Therefore, the degree map $\deg$
on $\CB_0$ takes values in $n-2+2\N$ and is also a bijection, proving
\ref{deg-1}. Finally, \ref{deg-2} is an immediate consequence of
\ref{deg-0} and \ref{deg-1}.
\end{proof}

Dynamical convexity of the Reeb flow readily follows from the
lemma. By \ref{deg-0}, $\mu(x_i)\geq \mu(x_1)$ for
$\{x_1,x_2,\dotsc\}=\PP_2$. Since $x_1$ is the lowest-action closed
Reeb orbit, it is necessarily simple and hence $x_1\in\PP_0$. Now, by
\ref{deg-1}, $\mu(x_1)=n+1$. Therefore, $\mu(x_i)\geq n+1$ for all
$x_i\in\PP_2$, i.e., the flow is dynamically convex.

To prove the rest of the statement, fix $r\geq 1$ and let
$Y=\{y_1,\dotsc,y_r\}$ be the first $r=q$ prime orbits in $\PP_2$,
ordered arbitrarily. If the flow has finitely many prime orbits and
$r$ is greater than their number, we relabel $r$ as that number and
take the set of all prime orbits as $Y$. Let
$\{y_1,\dotsc,y_q\}=Y\cap\PP_0$ be the non-alternating orbits in
$Y$. We need to prove that $q\leq n$, and to establish that all orbits
have the same action/mean index ratio it suffices to prove this for
the orbits in $Y$.

Consider an index recurrence event for the linearized flows of the
orbits from $Y$, where we require $d_j$ and all $k_{ij}$ to be even,
$d_j$ to be sufficiently large, and $\eta<1$.  By \ref{IR1} from
Theorem \ref{thm:IRT},
$$
\big|\mu(y_i^{k_{ij}})-d_j\big|\leq n-1.
$$
Let $\CI:=\Z\cap [d_j-n+1,d_j+n-1]$. This interval has exactly $n$
points of parity $n+1$ since $d_j$ is even. Note that
$y_i^{k_{ij}}\in\PP_0$ for $i\leq q$. Thus the orbits $y_i^{k_{ij}}$
have distinct indices by \ref{deg-1} and have at most $n$ available
slots to fill. (Some of the slots can be occupied by the iterates of
prime orbits that are not in $Y$.) Hence, $q\leq n$.

Next, consider the consecutive bars from $\CB_0$ of degree in the
range from $d-n-2$ to $d+n$. By \ref{deg-1}, there are exactly $n+1$
such bars. Since by Theorem \ref{thm:depth} all bars have length
bounded by some constant $\Cbar$, the total length of these bars is
bounded from above by $(n+1)\Cbar$. The actions $\CA(y_i^{k_{ij}})$
appear as some of the end-points of these bars for $i\leq q$ but not
for $i>q$. Due to \ref{deg-2}, the actions $\CA(y_i^{k_{ij}})$ for
$i>q$ partition these bars from $\CB_0$ into the bars in
$\CB_2$. Thus, for any $i\leq r$ and $l\leq r$, we have
$$
\big|k_{ij}\CA(y_i)-k_{lj}\CA(y_l)\big|
=\big|\CA(y_i^{k_{ij}})-\CA(y_l^{k_{lj}})\big|\leq (n+1)\Cbar
$$
and, by \ref{IR1},
$$
\big|k_{ij}\hmu(y_i)-k_{lj}\hmu(y_l)\big|
=\big|\hmu(y_i^{k_{ij}})-\hmu(y_l^{k_{lj}})\big|\leq 2\eta.
$$
Dividing the first of these inequalities by $\CA(y_l)$, we have
$$
\big|k_{ij}\CA(y_i)/\CA(y_l)-k_{lj}\big|
\leq (n+1)\Cbar/\CA(y_l),
$$
which, combined the second inequality, yields
$$
\big|k_{ij}\hmu(y_i)-k_{ij}\hmu(y_l)\CA(y_i)/\CA(y_l)\big|
\leq C,
$$
where the constant $C$ is independent of $j$. Dividing by $k_{ij}$ and
passing to the limit as $k_{ij}\to\infty$, we obtain
$\hmu(y_i)=\hmu(y_l)\CA(y_i)/\CA(y_l)$, i.e.,
$\hmu(y_i)/\CA(y_i)=\hmu(y_l)/\CA(y_l)$. This concludes the proof of
the theorem.  \hfill\qed

\end{document}